\newtheorem{thm}{Theorem}[section]%
\newtheorem{cor}[thm]{Corollary}%
\newtheorem{prop}[thm]{Proposition}%
\newtheorem{lem}[thm]{Lemma}%
\newtheorem{defi}[thm]{Definition}%
\newtheorem{rem}[thm]{Remark}%
\newcommand{\DeclareAlphabet}[2]{%
  \foreach \x in {A,B,...,Z}{%
    \expandafter\xdef
    \csname #1\x\endcsname{%
      \noexpand#2{\x}}%
  }
}
\newcommand{\p}[4]{{#3}\!\left#1{#4}\right#2}
\newcommand{\ABS}[1]{\left| #1 \right|} % |1|
\newcommand{\BRA}[1]{{\left\{ #1 \right\}}} % {1}
\newcommand{\NRM}[1]{\left\| #1\right\|} % ||1||
\newcommand{\PAR}[1]{{\left( #1 \right)}} % (1)
\newcommand{\SBRA}[1]{{\left[#1\right]}} % [1]
\let\TV\VT
\renewcommand{\leq}{\leqslant}
\renewcommand{\geq}{\geqslant}
\newcommand{\eps}{\varepsilon}
\renewcommand{\epsilon}{\varepsilon}
\newcommand{\tpullbk}[2]{%
\ifthenelse{\equal{#1}{}}{%
  \Phi^{\star}_{#2}}{%
  \Phi^{#1,\star}_{#2}}%
}
\newcommand{\tpushfw}[2]{%
\ifthenelse{\equal{#2}{}}{%
  \Phi^{#1}_{\star}}{%
  \Phi^{#1}_{#2,\star}}%
}
\newcommand{\pullbk}[1]{%
  \bphi^{\star}_{#1}}{%
}
\newcommand{\prb}[2][]{\dP_{#1}\left[#2\right]}
\newcommand{\esp}[2][]{\dE_{#1}\left[#2\right]}
\newcommand{\Cdiscret}{\tilde{\mathcal{C}}}
\newcommand{\Ccontinu}{\mathcal{C}}
\newcommand{\ind}{\mathds{1}}
\newcommand{\leb}[1]{\lambda_{\dR^{#1}}}
\newcommand{\vtr}[1]{\mathbf{#1}}
\DeclareMathOperator{\rk}{rank}
\DeclareMathOperator{\Div}{div}
\DeclareMathOperator{\supp}{supp}
\newcommand{\famF}{\mathcal{F}}
\newcommand{\famG}{\mathcal{G}}
\newcommand{\famH}{\mathcal{H}}
\newcommand{\PhiIU}{\bphi^{\vtr{i}}_{\vtr{u}}}
\newcommand{\PhiIV}{\bphi^{\vtr{i}}_{\vtr{v}}}
\newcommand{\FIU}{\mathcal{F}_{\vtr{i},\vtr{u}}}
\newcommand{\FIUTilde}{\mathcal{F}_{\tilde{\vtr{i}},\tilde{\vtr{u}}}}
\newcommand{\GIU}{\mathcal{G}_{\vtr{i},\vtr{u}}}
\newcommand{\GIUTilde}{\mathcal{G}_{\tilde{\vtr{i}},\tilde{\vtr{u}}}}
\newcommand{\tribuDuPoisson}{\cF_{Poi}}
\def\bi{\vtr{i}}
\def\bj{\vtr{j}}
\def\bv{\vtr{v}}
\def\bu{\vtr{u}}
\def\bt{\vtr{t}}
\def\bs{\vtr{s}}
\def\bphi{\mathbf{\Phi}}
\def\Rp{\dR_+}
\def\Rpn{\dR_+^n}
\def\tZ{\tilde{Z}}
\def\tX{\tilde{X}}
\def\tY{\tilde{Y}}
\def\tP{\tilde{P}}
\def\tK{\tilde{K}}
\def\TT{\mathbb{T}}
\title{Qualitative properties of certain piecewise deterministic Markov processes}
\author{Michel Benaïm, Stéphane Le Borgne, Florent Malrieu, Pierre-André Zitt}
\begin{document}

\maketitle

\begin{abstract}
We study a class
of Piecewise Deterministic Markov Processes with state space
$\dR^d\times E$ where $E$ is a finite set. The continuous
component evolves according to a smooth vector field that is switched
at the jump times of the discrete coordinate. The jump rates may depend
on the whole position of the process. Working under the general assumption
that the process stays in a compact set, we detail a possible construction
of the process and characterize its support, in terms of the solutions
set of a differential inclusion. We establish results on the long time behaviour
of the process, in relation to a certain set of accessible points, which is shown
to be strongly linked to the support of invariant measures. Under Hörmander-type
bracket conditions, we prove that there exists a unique invariant measure
and that the processes converges to equilibrium in total variation.
Finally we give examples where the bracket condition does not hold, and where
there may be one or many invariant measures, depending on the jump rates
between the flows.

\medskip

\textbf{Keywords: } Piecewise deterministic Markov Process, convergence to equilibrium, 
differential inclusion, Hörmander bracket condition
\medskip

\textbf{AMS Classification:} 60J99, 34A60
\end{abstract}

%%%%%%
\section{Introduction}

Piecewise deterministic Markov processes (PDMPs in short) are intensively
used in many applied areas (molecular biology \cite{RMC}, storage modelling
 \cite{boxma}, Internet traffic \cite{MR1895332,MR2588247,GR2}, neuronal
  activity \cite{riedler,PTW},...).
 Roughly speaking, a Markov process is a PDMP if its randomness is only
 given by the jump mechanism: in particular, it admits no diffusive
 dynamics. This huge class of processes has been introduced by
 Davis \cite{davis}. See \cite{MR1283589,jacobsen} for a general presentation.
 
 In the present paper, we deal with an interesting subclass of the
 PDMPs that plays a role in molecular biology \cite{RMC, riedler} 
(see also \cite{yin} for other motivations). We consider a PDMP evolving  on
 $\dR^d\times E$, where $d\geq 1$ and $E$ is a finite set, as follows: the
 first coordinate moves continuously on $\dR^d$ according to
 a smooth vector field that depends on the second coordinate, whereas the
 second coordinate jumps with a rate depending on the first one. 
 Of course, most of the results in the present paper should  extend to smooth 
 manifolds. This class of Markov processes is reminiscent of the
 so-called iterated random functions in the discrete time setting
 (see \cite{diaconis} for a good review of this topic).

 We are interested in the long time qualitative behaviour of these processes.
 A recent paper by Bakhtin and Hurth \cite{bakhtin&hurt} considers the
 particular situation where the jump rates are constant and prove the
 beautiful result that, under a Hörmander type condition, if there exists
 an invariant measure for the process, then it is unique and absolutely
 continuous with respect to the ``Lebesgue'' measure on $\dR^d \times E$.
 Here we consider a more general situation and focus also on the convergence 
 to equilibrium. We also provide a basic proof of the main result in \cite{bakhtin&hurt}. 

%\subsection{Notation, standing assumptions and Main results}
 Let us define our process more precisely.
 Let $E$ be a finite set, and for any $i\in E$, $F^i: \dR^d \mapsto \dR^d$
 be a smooth vector field. We assume throughout that each $F^i$ is bounded and 
 we denote by $C_{sp}$ an upper bound for the ``speed'' of the deterministic 
 dynamics: 
 \[
\sup_{x\in\dR^d,i\in E} \|F^i(x)\| \leq C_{sp} < \infty.
\]
We let $\Phi^i = \{\Phi^i_t\}$
 denote the flow induced by $F^i$. Recall that
  \[
  t \mapsto \Phi^i_t(x) = \Phi^i(t,x)
  \]
  is the solution to the Cauchy problem $\dot{x} = F^i(x)$ with initial condition
$x(0) = x$. Moreover, we assume  that there exists a compact
set~$M \subset \dR^d$ that is  positively invariant under each $\Phi^i$, meaning that:
\begin{equation}
  \label{eq=invarianceAssumption}
  \forall i\in E,\ \forall t \geq 0, \quad  \Phi^i_t(M) \subset M.
\end{equation}
 We consider here a continuous time Markov process $(Z_t = (X_t,Y_t))$
 living on $M \times E$ whose infinitesimal generator acts on functions
\begin{align*}
 g: M \times E&\rightarrow \mathbb{R},\\
 (x,i) &\mapsto g(x,i) = g^i(x),
\end{align*}
smooth\footnote{meaning that $g^i$ is the restriction to $M$ of a smooth
function on $\dR^d$.} in $x$, according to the formula
\begin{equation}\label{eq:gen-inf}
Lg(x,i)= \langle F^i(x), \nabla g^i(x)\rangle +
\sum_{j\in E}\lambda(x,i,j)(g^j(x)-g^i(x))
\end{equation}
where
\begin{enumerate}[label={\bfseries(\roman*)}]
  \item $x \mapsto \lambda(x,i,j)$ is continuous;
  \item $\lambda(x,i,j) \geq 0$ for $i \neq j$ and $\lambda(x,i,i) = 0;$
  \item for each $x \in M$, the matrix $(\lambda(x,i,j))_{ij}$ is  irreducible.
\end{enumerate}

The process is explicitly constructed in Section~\ref{sec:construct} and
some of its basic properties (dynamics, invariant and empirical occupation 
probabilities) are established. In Section~\ref{sec:law} we
describe  (Theorem \ref{supportlaw}) the support of the law of the process
in term of the solutions set of a differential inclusion induced by the collection
$\{F^i \: : i \in E \}.$ Section~\ref{sec:acces} introduces the \emph{accessible
set} which is a natural candidate to support invariant probabilities. We
show (Proposition~\ref{limsetprop}) that this set is compact, connected,
strongly positively invariant and invariant under the differential inclusion
induced by $\{F^i \: : i \in E \}.$ Finally, we prove that, if the process has a 
unique invariant probability measure, its support is characterized in terms 
of the accessible set. 

Section~\ref{sec:reg-and-erg} contains the main results of the present paper. 
We begin by a slight improvement of the regularity results of \cite{bakhtin&hurt}: under Hörmander-like 
bracket conditions, the law of the process after a large enough number of jumps 
or at a sufficiently large time has an absolutely continuous component 
with respect to the Lebesgue measure on $\dR^d\times E$. Moreover, this component may be 
chosen uniformly with respect to the initial distribution. The proofs of these results 
are postponed to Sections~\ref{absolute} and \ref{alamain}.
We use these estimates in Section~\ref{sec:erg} to 
establish the exponential ergodicity (in the sense of the total variation distance) of 
the process under study. In Section~\ref{sec:elementary}, we 
show that our assumptions are sharp thanks to several examples. In particular, 
we stress that, when the Hörmander condition is violated, the uniqueness of the
invariant measure may depend on the jump mechanism between flows,
and not only on the flows themselves.

\begin{rem}[Quantitative results]
In the present paper, we essentially deal with qualitative properties of the asymptotic behavior  
for a large class of PDMPs. Under more stringent assumptions, \cite{BLMZquant} gives an
explicit rate of convergence in Wasserstein distance, via a coupling argument.
\end{rem}

\begin{rem}[Compact state space]
The main results in the present paper are still valid even if the state space is no longer compact 
provided that the excursions out of some compact sets are suitably controlled (say
with a Lyapunov function). Nevertheless, as shown in~\cite{BLMZexample}, the stability of 
Markov processes driven by an infinitesimal generator as \eqref{eq:gen-inf} may depend on the 
jump rates. As a consequence, it is difficult to establish, in our general framework, sufficient 
conditions for the stability of the process under study without the invariance 
assumption~\eqref{eq=invarianceAssumption}; results in this direction may however be found
in the recent~\cite{CH13}. 
\end{rem}

%\tableofcontents

\section{Construction and basic properties}
\label{sec:construct}

In this section we explain how to construct explicitly the process $(Z_t)_{t\geq 0}$ 
driven by \eqref{eq:gen-inf}.
Standard references for the construction and properties of more general PDMPs 
are the monographs \cite{MR1283589} and \cite{jacobsen}. In our case the compactness
allows a nice construction via a discrete process whose jump times follow
an homogeneous Poisson process, similar to the  classical ``thinning'' 
method for simulating non-homogeneous Poisson processes (see~\cite{LS79,Ros97}). 

\subsection{Construction}

Since $M$ is compact and the maps $\lambda(\cdot,i,j)$ are continuous,
there exists $\lambda \in \dR_+$ such that
\[
\max_{x \in M, i\in E } \sum_{j\in E, j \neq i} \lambda(x,i,j)<\lambda.
\]
Let us fix such a $\lambda$, and let
\[
Q(x,i,j) = \frac{\lambda(x,i,j)}{\lambda}
\text{, for } i \neq j \qquad \text{and}\qquad
Q(x,i,i) = 1 - \sum_{j \neq i} Q(x,i,j) >0. 
\]
Note that $Q(x)={(Q(x,i,j))}_{i,j\in E}$ is an irreducible aperiodic Markov transition matrix
and that \eqref{eq:gen-inf} can be rewritten as
\[
Lg = Ag + \lambda (Q g - g)
\]
where
\begin{equation}  \label{defA}
Ag(x,i) = \langle F^i(x), \nabla g^i(x)\rangle 
\quad \text{and}\quad
Q g(x,i) =  \sum_{j\in E}Q(x,i,j)g^j(x). 
\end{equation}
Let us first construct a discrete time Markov chain ${(\tZ_n)}_{n\geq 0}$. 
Let ${(N_t)}_{t\geq 0}$ be  a homogeneous Poisson 
with intensity $\lambda$;  denote by ${(T_n)}_{n\geq 0}$ its jump times 
and ${(U_n)}_{n\geq 0}$ its interarrival times. 
Let $\tZ_0 \in M \times E$ be a random 
variable independent of ${(N_t)}_{t\geq 0}$.
Define  ${(\tZ_n)}_n = {(\tX_n,\tY_n)}_n$ on
$M \times E$ recursively by:
\begin{align*}
%\label{deftx}
  \tX_{n+1} &= \Phi^{\tY_n} \PAR{U_{n+1},\tX_n},\\
%\label{defty}
  \prb{  \tY_{n+1} = j \middle|  \tX_{n+1}, \tY_n = i} &= Q \PAR{\tX_{n+1},i,j}.
\end{align*}
Now define ${(Z_t)}_{t\geq 0}$ via interpolation by setting
\begin{equation}
\label{defz}
\forall  t \in [T_n,T_{n+1}),\quad 
  Z_t= \PAR{\Phi^{\tY_n}\PAR{t-T_n,\tX_n}, \tY_n}.
\end{equation} 
The memoryless property of exponential random variables makes
${(Z_t)}_{t\geq 0}$ a continuous time càdlàg Markov process. 
We let $P = {(P_t)}_{t \geq 0}$ denote the semigroup induced by
${(Z_t)}_{t \geq 0}$. 
Denoting by $\cC_0$ (resp. $\cC_1$)  the set of real valued functions
$f : M \times E \rightarrow \dR$ that are continuous (resp continuously
differentiable) in the first variable, we have the following result. 
\begin{prop}
The infinitesimal generator of the semigroup $P = {(P_t)}_{t\geq 0}$ is the operator $L$ 
given by~\eqref{eq:gen-inf}. Moreover, $P_t$ is \emph{Feller},
meaning that it maps $\cC_0$ into itself and, for $f\in\cC_0$,  $\lim_{t \rightarrow 0} \NRM{P_t f - f} = 0$.

The transition operator $\tP$ of the Markov chain $\tZ$ also maps
$\cC_0$ to itself, and if $K_t$ and  $\tK$ are defined by
\begin{equation}
%\label{defKt}
  K_t g(x,i) = g\PAR{\Phi_t^i(x),i}
%\label{defQt}
\quad\text{and}\quad
  \label{deftK}
\tK f = \int_0^{\infty} \lambda e^{-\lambda t}K_t f dt,
\end{equation}
then $\tP$ can be written as:
\begin{align}
\label{defP}
\tP g(x,i) 
&= \esp{g(\tZ_1)\middle| Z_0 = (x,i)}   \notag
= \int_0^{\infty} K_tQ g(x,i) \lambda e^{-\lambda t} dt \\
&= \tK Q g(x,i).
\end{align}
% Furthermore, for all $g \in C_1$,
%\[\lim_{t \rightarrow 0} \NRM{\frac{P_t g - g}{t} - L g} = 0,\]
\end{prop}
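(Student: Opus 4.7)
I would tackle the three claims separately. First, the formula $\tP = \tK Q$ is a direct computation from the recursive definition: conditionally on $\tZ_0 = (x,i)$, one has $U_1 \sim \mathrm{Exp}(\lambda)$, $\tX_1 = \Phi^i_{U_1}(x)$, and $\prb{\tY_1 = j \mid \tX_1} = Q(\tX_1, i, j)$, so integrating over $U_1$ and $\tY_1$ gives
\[
\tP g(x,i) = \int_0^\infty \lambda e^{-\lambda t} \sum_{j \in E} Q(\Phi^i_t(x), i, j) g^j(\Phi^i_t(x))\, dt = \int_0^\infty \lambda e^{-\lambda t} K_t(Q g)(x,i)\, dt = \tK Q g(x,i).
\]
Since $Q g \in \cC_0$ whenever $g \in \cC_0$ (continuity of $Q(\cdot,i,j)$ inherited from~(i) and finiteness of $E$), and since $\tK$ preserves $\cC_0$ by dominated convergence applied to the bounded continuous integrand $t \mapsto K_t h$, one concludes $\tP : \cC_0 \to \cC_0$.

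Second, for the identification of the generator, my plan is a small-$t$ expansion of $P_t g(x,i) = \esp{g(Z_t) \mid Z_0 = (x,i)}$ obtained by conditioning on the number $N_t$ of Poisson events. On $\{N_t = 0\}$, which has probability $1 - \lambda t + O(t^2)$, no jump occurs and $Z_t = (\Phi^i_t(x), i)$, so $g(Z_t) = g^i(x) + t \langle F^i(x), \nabla g^i(x)\rangle + O(t^2)$ by Taylor expansion (smoothness of $g^i$, boundedness of $F^i$). On $\{N_t = 1\}$, with probability $\lambda t + O(t^2)$, the unique event occurs at a time $T_1$ uniform on $[0,t]$; replacing $\Phi^i_{T_1}(x)$ and $\Phi^j_{t-T_1}$ by the identity (an $O(t)$ error inside a prefactor of order $t$) yields contribution $\lambda t \sum_j Q(x,i,j) g^j(x) + O(t^2)$. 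The event $\{N_t \geq 2\}$ has probability $O(t^2)$ and contributes $O(t^2)\|g\|_\infty$. Collecting terms and dividing by $t$,
\[
\frac{P_t g(x,i) - g^i(x)}{t} \xrightarrow[t \to 0^+]{} \langle F^i(x), \nabla g^i(x)\rangle - \lambda g^i(x) + \lambda \sum_j Q(x,i,j) g^j(x),
\]
and using $\lambda Q(x,i,j) = \lambda(x,i,j)$ for $i \neq j$ together with $Q(x,i,i) = 1 - \sum_{j \neq i} Q(x,i,j)$, the right-hand side is precisely $Lg(x,i)$ as in~\eqref{eq:gen-inf}.

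Third, for the Feller property, the same conditioning on $N_t$ writes $P_t g$ as a convergent series $\sum_{k \geq 0} e^{-\lambda t}(\lambda t)^k/k! \cdot T_{k,t} g$, where $T_{k,t}$ averages over $k$ uniformly distributed ordered jump times; each summand sends $\cC_0$ to $\cC_0$ by continuity of the flows and of $Q$ together with dominated convergence, so $P_t : \cC_0 \to \cC_0$. For the strong continuity at $t = 0$, I would use
\[
\|P_t f - f\|_\infty \leq \omega_f(C_{sp} t) + 2(1 - e^{-\lambda t})\|f\|_\infty,
\]
where $\omega_f$ is the modulus of continuity of $f$ on the compact space $M \times E$: on $\{N_t = 0\}$ the bound $\|\Phi^i_t(x) - x\| \leq C_{sp} t$ gives the first term, and on the complementary event (probability $O(t)$) one bounds crudely by $2\|f\|_\infty$.

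The only real obstacle is the uniform control of the $O(t^2)$ remainders in the generator expansion: one must verify that Taylor's theorem for $g^i$ along $\Phi^i$, the substitution of $\Phi^i_s(x)$ by $x$ inside the one-jump contribution, and the Poisson tail bound $\prb{N_t \geq 2} = O(t^2)$ are all uniform in $(x,i) \in M \times E$. This is routine bookkeeping relying on compactness of $M$, finiteness of $E$, boundedness of each $F^i$, continuity of $Q$, and boundedness of $\nabla g^i$ on $M$.
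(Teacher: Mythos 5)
Your proposal is correct and follows essentially the same route as the paper: both condition on the number of Poisson events in $[0,t]$, treat the $N_t=0$ and $N_t=1$ contributions explicitly (the paper keeps them as $e^{-\lambda t}K_tg+\lambda e^{-\lambda t}\int_0^t K_uQK_{t-u}g\,du$ and invokes the generator $A$ of $K_t$, while you Taylor-expand directly), bound $\{N_t\geq 2\}$ by $\prb{N_t>1}=O(t^2)$, and obtain $\tP=\tK Q$ and the Feller/strong-continuity properties by the same series decomposition and dominated convergence. One tiny overstatement: since $Q(\cdot,i,j)$ is only assumed continuous, the one-jump error is $t\cdot o(1)$ rather than $O(t^2)$, but this uniform $o(t)$ bound is all the limit requires, so nothing is lost.
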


\begin{proof}
For each $t \geq 0,$ $P_t$ acts on bounded
measurable maps $g : M \times E \rightarrow \dR$ according to the formula
\begin{equation*}
%\label{defPt}
  P_t g(x,i) = \esp{g(Z_t)|Z_0 = (x,i)}.
\end{equation*}
For $t\geq 0$ let $J_t = K_tQ$. It follows from  \eqref{defz} that
\begin{equation}
\label{eq:def2Pt}
P_t g  = \sum_{n \geq 0} \esp{ \ind_{\{N_t = n\}} J_{U_1} \circ\cdots\circ J_{U_n} \circ K_{t -T_n} g }
\end{equation}

By Lebesgue continuity theorem and \eqref{eq:def2Pt}, $P_tg  \in \cC_0$
whenever $g  \in \cC_0$. Moreover, setting apart the first two terms
in~\eqref{eq:def2Pt} leads to
\begin{equation}
  \label{eq=deuxTermes}
  P_t g = e^{-\lambda t} K_t g  +
  \lambda  e^{-\lambda t} \int_0^t K_u Q K_{t-u} g \,  du + R(g,t)
\end{equation}
where $\ABS{R(g,t)} \leq \NRM{g} \prb{N_t > 1}
= \NRM{g} (1 - e^{-\lambda t}(1 + \lambda t)).$
Therefore $\lim_{t \rightarrow 0} \NRM{P_t g - g} = 0$.

The infinitesimal generator of $(K_t)$ is the operator $A$ defined by~\eqref{defA}. Thus 
\(
 \frac{1}{t}(K_t g - g ) \to Ag,
\)
therefore, by \eqref{eq=deuxTermes},
\[
  \frac{P_t g - g}{t} \xrightarrow[t\to 0]{}  A g - \lambda g + \lambda Qg,
\]
and the result on $(P_t)$ follows.
The expression~\eqref{defP} of $\tP$ is a consequence
of the definition of the chain. From~\eqref{defP} one
can deduce that $\tP$ is also  Feller.
\end{proof}

\begin{rem}
  [Discrete chains and PDMPs] The chain~$\tZ$ records all jumps of the 
  discrete part~$Y$ of the PDMP~$Z$, but, since $Q(x,i,i)>0$, 
  $\tZ$ also contains ``phantom jumps'' that cannot be seen 
  directly on the trajectories of~$Z$. 

  Other slightly different discrete chains may crop up in the study of PDMPs. 
  The most natural one is the process observed 
  at (true) jump times. In another direction, the chain~$(\Theta_n)_{n\in\dN}$ 
  introduced in~\cite{MR2385873} corresponds (in our setting) to the
  addition of phantom jumps at rate~$1$. For this chain~$(\Theta_n)$, 
  the authors prove (in a more general setting) equivalence between 
  stability properties of the discrete and continuous time processes. 

  Similar equivalence properties will be shown below for our chain~$\tZ$
  (Proposition~\ref{homeo}, Lemma~\ref{martinempir}). 
  Its advantage lies in the simplicity of its definition;
  in particular it leads to a simulation method that does not
  require the integration of jump rates along trajectories. 
\end{rem}

\paragraph{Notation.} Throughout the paper we may write $\prb[x,i]{\cdot}$
for $\prb{\cdot\middle |Z_0 = (x,i)}$ and  $\esp[x,i]{\cdot}$ for $\esp{\cdot|Z_0 = (x,i)}$.

\subsection{First properties of the invariant probability measures} 

Let $\cM(M \times E)$ (respectively  $\mathcal{M}^+(M \times E)$
and $\mathcal{P}(M \times E)$) denote the set of signed (respectively
positive, and probability) measures on $M \times E$. For
$\mu \in \mathcal{M}(M \times E)$ and $f \in L^1(\mu)$ we write
$\mu f$  for $\int f d\mu$. Given a  bounded operator $K : \cC_0 \to \cC_0$
and $\mu \in \mathcal{M} (M \times E)$ we let
$\mu K \in \mathcal{M}(M \times E)$ denote  the measure defined by duality :
\[
\forall g \in \cC_0, \quad (\mu K)g=  \mu(K g).
\]
The mappings  $\mu \mapsto \mu P_t$ and $\mu\mapsto  \mu \tP$ preserve the sets
$\mathcal{M}^+(M \times E)$ and  $\mathcal{P}(M \times E)$.

\begin{defi}[Notation, stability]
  We denote by $\mathcal{P}_{inv}$ (resp. $\tilde{\mathcal{P}}_{inv}$)
 the set of invariant probabilities for $(P_t)$ (resp. $\tP$), 
 \begin{align*}
  \mu \in \mathcal{P}_{inv} &\iff \forall t \geq 0,\quad\mu P_t = \mu ; \\
  \mu \in \tilde{\mathcal{P}}_{inv} &\iff \mu \tP = \mu. 
\end{align*}
We say that the process $Z$ (or $\tilde Z$) is \emph{stable} if it has a 
unique invariant probability measure.

For $n \in \mathbb{N}^*$ and $t > 0$ we let $\tilde{\Pi}_n$ and $\Pi_t$ the
(random) occupation measures defined by
\[
\tilde{\Pi}_n = \frac{1}{n} \sum_{k = 1}^n \delta_{\tZ_k}
\quad\text{and}\quad
 \Pi_t = \frac{1}{t} \int_0^t \delta_{Z_t}.
 \]
\end{defi}

By standard results for Feller chains on a compact space
(see e.g.~\cite{duf00}), the set $\tilde{\mathcal{P}}_{inv}$ is non empty, 
compact (for the weak-$\star$ topology) and convex. Furthermore, 
with probability one every limit point of ${(\Pi_n)}_{n\geq 1}$ lies in 
$\tilde{\mathcal{P}}_{inv}$.

The following result gives an explicit correspondence between invariant 
measures for the discrete and continuous processes. 

\begin{prop}[Correspondence for invariant measures]
\label{homeo}
The mapping $\mu \mapsto \mu \tK$ maps~$\tilde{\mathcal{P}}_{inv}$
homeomorphically onto~$\mathcal{P}_{inv}$ and extremal points of%
~$\tilde{\mathcal{P}}_{inv}$ (i.e. ergodic probabilities for~$\tP$)  onto
extremal points of~$\mathcal{P}_{inv}$ (ergodic probabilities for~$P_t$). 

The inverse homeomorphism is the map   $ \mu  \mapsto \mu Q$
restricted to ${\mathcal{P}}_{inv}$. 

Consequently, the continuous time process $(Z_t)$ is stable if and only
if the Markov chain $(\tZ_n)$ is stable. 
\end{prop}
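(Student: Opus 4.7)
The engine of the proof is an algebraic identity linking $L$, $\tP$, $\tK$ and $Q$. I would first establish, for $g\in\cC_1$, the resolvent-type relation
\[
  \tK A g = A\tK g = \lambda(\tK g - g),
\]
by integrating by parts in $\tK A g = \int_0^{\infty} \lambda e^{-\lambda t}\partial_t K_t g\,dt$ (using $\partial_t K_t g = A K_t g$, $K_0 g = g$, and boundedness of $K_t g$ on $M$). Combined with the decomposition $L = A + \lambda(Q-I)$ and the formula $\tP = \tK Q$ from~\eqref{defP}, this yields the two dual identities
\[
  \tK L = \lambda(\tP - I) \qquad\text{and}\qquad L\tK = \lambda(Q\tK - I).
\]

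The first identity gives the forward inclusion: for $\mu\in\tilde{\cP}_{inv}$ and any smooth $g$,
\[
  (\mu\tK)(Lg) = \mu(\tK L g) = \lambda\,\mu\bigl((\tP - I)g\bigr) = 0.
\]
Standard arguments for Feller semigroups --- $\cC_1$ is a core for $L$, and $P_t$ preserves $\cD(L)$ --- then upgrade this to $\mu\tK P_t = \mu\tK$ for all $t\geq 0$, so $\Psi\colon\mu\mapsto\mu\tK$ sends $\tilde{\cP}_{inv}$ into $\cP_{inv}$. Conversely, the second identity rewritten as $Q\tK g = g + \lambda^{-1}L\tK g$ (evaluated on smooth $g$; choosing $\lambda$ large enough ensures $\tK g\in\cC_1$ via the variational equation for $\Phi^i_t$) gives, by integrating against $\nu\in\cP_{inv}$,
\[
  \nu Q\tK g = \nu g + \lambda^{-1}\nu(L\tK g) = \nu g,
\]
since $\nu L = 0$ on smooth functions. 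Hence $\nu Q\tK = \nu$, from which $\nu Q\tP = \nu Q\tK Q = \nu Q$, so $\Phi\colon\nu\mapsto\nu Q$ maps $\cP_{inv}$ into $\tilde{\cP}_{inv}$ and satisfies $\Psi\circ\Phi = \mathrm{id}$. The other composition is immediate: $\Phi\circ\Psi(\mu) = \mu\tK Q = \mu\tP = \mu$.

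The remaining statements are then automatic. Weak-$\star$ continuity of $\Psi$ and $\Phi$ is built into the fact, already recorded in the proposition, that $\tK$ and $Q$ preserve $\cC_0$: if $\mu_n\to\mu$ weakly, then $\mu_n\tK(g) = \mu_n(\tK g)\to \mu(\tK g) = \mu\tK(g)$ for every $g\in\cC_0$, and likewise for $Q$. Since $\Psi$ and $\Phi$ are affine and mutually inverse, they are homeomorphisms of compact convex sets that send extremal points onto extremal points, i.e.\ ergodic probabilities onto ergodic probabilities. The stability equivalence is then the statement $\#\cP_{inv} = 1 \Leftrightarrow \#\tilde{\cP}_{inv} = 1$ given by the bijection.

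The main obstacle is the analytic step $\nu L = 0 \Rightarrow \nu\in\cP_{inv}$: the operator identity only provides nullity of $\nu L$ on the smooth test functions, and one has to verify that $\cC_1$ is a core for the Feller generator $L$ and that $P_t\cD(L)\subset\cD(L)$, so that the classical calculation $\tfrac{d}{dt}\nu P_t f = \nu L P_t f = 0$ extends to a dense subset of $\cC_0$. These are standard facts for Feller PDMPs on a compact space but must be invoked carefully; once they are in place, the remainder of the proof is a direct chase through the operator identities.
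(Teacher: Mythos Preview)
Your proof is correct and follows essentially the same route as the paper: the paper derives exactly your two operator identities (written there as $\mu\tK L f = \lambda(-\mu f + \mu\tP f)$ and $\mu L(\tK f) = \lambda(-\mu f + \mu Q\tK f)$) from the same integration-by-parts relation $\tK(\lambda I - A) = \lambda I = (\lambda I - A)\tK$, and then reads off both inclusions and the inverse relation $\nu = \nu Q\tK$ just as you do. If anything, you are more explicit than the paper on two points it leaves implicit: the weak-$\star$ continuity of $\mu\mapsto\mu\tK$ and $\nu\mapsto\nu Q$ (via $\tK,Q:\cC_0\to\cC_0$), and the analytic step ``$\nu L f = 0$ on $\cC_1$ implies $\nu P_t = \nu$'', where the paper simply invokes density of $\cC_1$ in $\cC_0$.
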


\begin{proof}
 For all $f \in \cC_1,$ integrating by parts
 $\int_0^{\infty} \frac{d K_t f}{dt} e^{-\lambda t} dt$ and using the identities
 $\frac{d K_t f}{dt} = A K_t f = K_t A f$ leads to
\begin{equation}
\label{pseudoinv}
\tK (\lambda I - A) f = \lambda f = (\lambda I - A) \tK f.
\end{equation}
Let $\mu \in  \mathcal{P}(M \times E).$ Then, using \eqref{pseudoinv}
and the form of $L$ gives
\begin{align}
\label{mutkl}
\mu \tK L f
&= \mu \tK (A-\lambda I)  f + \lambda \mu \tK  Q f = \lambda \PAR{- \mu f +  \mu \tP f},
\\
\label{multk}
\mu L (\tK f)
&= \mu (A-\lambda I ) \tK f + \lambda \mu Q \tK f = \lambda \PAR{-\mu f + \mu Q \tK f}.
\end{align}
If $\mu \in \tilde{\mathcal{P}}_{inv}$, \eqref{mutkl} implies  $(\mu \tK) L f = 0$
for all $f \in \cC_1$ and  since $\cC_1$ is dense in $\cC_0$ this proves that
$\mu \tK \in  {\mathcal{P}}_{inv}$.  Similarly, if
$\mu \in {\mathcal{P}}_{inv}$, (\ref{multk}) implies $\mu = \mu Q \tK$.
Hence $(\mu Q) = (\mu Q) \tK Q = (\mu Q) \tP$ proving that
$\mu Q \in \tilde{\mathcal{P}}_{inv}.$ Furthermore the identity
$\mu = \mu Q \tK$ for all  $\mu \in {\mathcal{P}}_{inv}$ shows that the maps
$\mu  \mapsto \mu \tK$ and $\mu  \mapsto \mu Q$ are inverse homeomorphisms.
\end{proof}

\begin{lem}[Comparison of empirical measures]
\label{martinempir}
Let $f : M \times E \mapsto \dR$ be a bounded measurable function.
Then
\[
\lim_{t \rightarrow \infty} \Pi_t f  - \tilde{\Pi}_{N_t} \tilde{K} f = 0
\quad \text{and} \quad
\lim_{n \rightarrow \infty} \Pi_{T_n} f  - \tilde{\Pi}_{n} \tilde{K} f = 0
\]
with probability one.
\end{lem}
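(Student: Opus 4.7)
The idea is to compare $\Pi_t f$ and $\tilde\Pi_n \tilde K f$ by slicing both integrals/sums along the Poisson jump times $T_k$, so that each slice becomes a function only of $(\tZ_k, U_{k+1})$, and then applying a martingale strong law.

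The central quantity is, for $k\geq 0$,
\[
W_k = \int_{T_k}^{T_{k+1}} f(Z_s)\, ds = \int_0^{U_{k+1}} f\bigl(\Phi^{\tY_k}_u(\tX_k),\tY_k\bigr) du.
\]
Let $\mathcal{G}_k = \sigma(\tZ_0,\ldots,\tZ_k,U_1,\ldots,U_k)$. Using the memoryless independence of $U_{k+1}$ (an $\mathrm{Exp}(\lambda)$ random variable independent of $\mathcal{G}_k$ and of $\tZ_{k+1}$'s conditional law given past), a direct Fubini computation gives
\[
\esp{W_k \,\middle|\, \mathcal{G}_k}
= \int_0^\infty f\bigl(\Phi^{\tY_k}_u(\tX_k),\tY_k\bigr) e^{-\lambda u}du
= \frac{1}{\lambda}\tilde K f(\tZ_k),
\]
and $|W_k|\le \NRM{f}\, U_{k+1}$, so $W_k$ has moments of all orders uniformly in $k$. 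Consequently
\[
M_n := \sum_{k=0}^{n-1}\Bigl(W_k - \frac{1}{\lambda}\tilde K f(\tZ_k)\Bigr)
\]
is a square-integrable martingale whose increments have variance bounded by a constant $C(f,\lambda)$. The classical martingale strong law (or Kolmogorov's criterion $\sum \mathrm{Var}(\Delta M_k)/k^2<\infty$) then yields $M_n/n \to 0$ almost surely.

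For the second assertion, I write
\[
\Pi_{T_n} f = \frac{1}{T_n}\sum_{k=0}^{n-1} W_k
= \frac{n}{T_n}\cdot\frac{1}{n}\sum_{k=0}^{n-1}W_k,
\]
observe that $n/T_n \to \lambda$ a.s.\ by the usual LLN for the i.i.d.\ exponential interarrival times, and that the shift of index from $\{0,\ldots,n-1\}$ to $\{1,\ldots,n\}$ costs only $O(1/n)$ since $\tilde K f$ is bounded. Combining with $M_n/n\to 0$, the expression $\Pi_{T_n} f - \tilde\Pi_n \tilde K f$ decomposes as $\frac{n}{T_n}\bigl(\frac{1}{n}\sum W_k - \frac{1}{\lambda}\tilde\Pi_n\tilde K f\bigr) + (\frac{n}{T_n}-\lambda)\cdot\frac{1}{\lambda}\tilde\Pi_n \tilde K f$, both pieces tending to $0$ a.s.

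For the continuous-time statement, I reduce to what has just been proven via
\[
\ABS{\Pi_t f - \frac{T_{N_t}}{t}\Pi_{T_{N_t}} f}
\leq \frac{\NRM{f}\,U_{N_t+1}}{t}.
\]
Since $T_{N_t}/t\to 1$ and $U_{N_t+1}/t\to 0$ a.s.\ (the latter because $(U_k)$ are i.i.d.\ exponentials, so $\max_{k\le Cn}U_k = O(\log n)$ a.s., while $N_t\sim \lambda t$), one gets $\Pi_t f - \Pi_{T_{N_t}} f \to 0$ a.s., and applying the discrete statement along the random index $n = N_t\to\infty$ concludes.

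The only delicate step is the martingale strong law for $M_n$: one must check the filtration is set up so that $W_k - \lambda^{-1}\tilde K f(\tZ_k)$ is genuinely a martingale difference. This hinges on $U_{k+1}$ being independent of $\mathcal{G}_k$ — which follows from the independent construction of the Poisson process in Section~\ref{sec:construct} — and on $\tilde Y_{k+1}$ being determined from $\tilde X_{k+1}$ \emph{after} $U_{k+1}$ is revealed, so that it does not interfere with $W_k$.
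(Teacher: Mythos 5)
Your proof is correct and follows essentially the same route as the paper: decompose the time integral along the Poisson jump times, bound the leftover piece by $\NRM{f}\,U_{N_t+1}$, and apply the martingale strong law of large numbers to the compensated sums $\sum_k\bigl(\int_{T_k}^{T_{k+1}}f(Z_s)\,ds-\lambda^{-1}\tK f(\tZ_k)\bigr)$. Your bookkeeping of the $1/\lambda$ factors, the index shift, and the filtration making the increments genuine martingale differences is in fact slightly more careful than the paper's own write-up.
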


\begin{proof}
  Decomposing the continuous time interval $[0,t]$ along the jumps yields:
\[\Pi_t f =
\frac{N_t}{t} \PAR{  \frac{1}{N_t} \sum_{i = 0}^{N_{t}-1}
\int_{T_i}^{T_{i+1}} f(Z_s)ds + r_t }
\]
where $\|r_t\| \leq \|f\| \frac{U_{N_t+1}}{N_t}$.

Since  $\lim_{t\rightarrow\infty} \frac{N_t}{t} = \lambda$ almost surely  and
$\prb{U_n/n \geq \eps } = e^{-\lambda n \eps}$,
$r_t \xrightarrow[t\to\infty]{a.s.}0$,  one has
  \[
\Pi_t f - \frac{1}{N_t} \sum_{i = 0}^{N_{t}-1} \int_{T_i}^{T_{i+1}} f(Z_s)ds
 \xrightarrow[t\to\infty]{a.s.} 0.
  \]
   Now, note that
 \[
   \int_{T_i}^{T_{i+1}} f(Z_s) ds =  \int_0^{U_{i+1}} f\PAR{\phi_s^{\tY^i}(\tX_i),\tY_i} ds,
 \]
therefore
 \[
   M_n = \sum_{i = 0}^{n-1} \PAR{\int_{T_i}^{T_{i+1}} f(Z_s) ds - \tK f (\tX_i, \tY_i)}
 \]
 is a martingale with increments bounded in $L^2$:
 $\esp{(M_{n+1}-M_n)^2} \leq 2 \|f\|^2/\lambda^2 .$
Therefore, by the strong law of large numbers for martingales,
\(\lim_{n\rightarrow\infty} \frac{M_n}{n} = 0\) almost surely, 
and the result follows. 
\end{proof}

As in the discrete time framework, the set $\mathcal{P}_{inv}$  is non empty compact 
(for the weak-$\star$ topology) and convex. Furthermore, with probability one,
every limit point of ${(\Pi_t)}_{t\geq 0}$ lies in $\mathcal{P}_{inv}$.

Finally, one can check that an invariant measure for the embedded chain and its 
associated invariant measure for the time continuous process have the same 
support. Given $\mu \in \cP(M \times E)$ let us denote by $\supp(\mu)$ its support.
\begin{lem}\label{samesupport}
Let $\mu \in \tilde{\mathcal{P}}_{inv}.$ Then $\mu$ and $\mu \tK$ have
the same support. 
\end{lem}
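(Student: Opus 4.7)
The plan is to establish the two inclusions $\supp(\mu) \subset \supp(\mu\tK)$ and $\supp(\mu\tK) \subset \supp(\mu)$ separately, each reducing to a one-line integral bound once the setup is in place.

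For $\supp(\mu) \subset \supp(\mu\tK)$, the idea is to exploit the fact that $\tK$ smooths along the deterministic flow while leaving the discrete coordinate fixed. Fix $(x,i) \in \supp(\mu)$ and an open neighbourhood $V$ of $(x,i)$; since $E$ is discrete, we may assume $V = V' \times \{i\}$ with $V' \ni x$ open in $M$. Continuity of $\Phi^i$ at $(0,x)$ yields some $\delta > 0$ and an open neighbourhood $U' \ni x$ such that $\Phi^i_t(y) \in V'$ for every $y \in U'$ and every $t \in [0,\delta]$. Restricting the integral defining $\mu\tK(V)$ to this region gives
\[
\mu\tK(V) \geq \int_{U' \times \{i\}} \int_0^{\delta} \lambda e^{-\lambda t}\, dt\, d\mu = (1 - e^{-\lambda\delta})\, \mu(U' \times \{i\}),
\]
which is strictly positive since $(x,i) \in \supp(\mu)$ forces $\mu(U' \times \{i\}) > 0$.

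For the reverse inclusion, the key input is the factorisation $\mu = \mu\tP = (\mu\tK) Q$, coming from $\tP = \tK Q$ and invariance of $\mu$. The choice of $\lambda$ in Section~\ref{sec:construct} ensures the strict inequality $\sum_{j \neq i}\lambda(x,i,j) < \lambda$, so that $(x,i) \mapsto Q(x,i,i) = 1 - \lambda^{-1}\sum_{j \neq i}\lambda(x,i,j)$ is continuous and strictly positive on the compact set $M \times E$, hence uniformly bounded below by some $c > 0$. For $(y,j) \in \supp(\mu\tK)$ and $V = V' \times \{j\}$, expanding
\[
\mu(V) = (\mu\tK) Q(V) = \int \sum_k Q(x,i,k)\, \ind_V(x,k)\, d(\mu\tK)(x,i)
\]
and keeping only the contribution $i = k = j$ yields $\mu(V) \geq c\, \mu\tK(V' \times \{j\}) > 0$.

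The computations on each side are essentially one-liners, so the main obstacle is structural bookkeeping rather than analysis: recognising that open neighbourhoods in $M \times E$ may be fibered over a single element of $E$, that $\tK$ preserves the discrete coordinate while $Q$ acts only on it, and that Proposition~\ref{homeo} delivers the relation $\mu = (\mu\tK) Q$ in the form needed here. The uniform lower bound on $Q(\cdot,i,i)$, hard-wired into the construction via the strict inequality at the start of Section~\ref{sec:construct}, is what makes the second inclusion immediate and avoids appealing to a Feller-type density argument.
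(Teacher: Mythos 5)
Your proof is correct and follows essentially the same route as the paper: the first inclusion uses continuity of the flow near $t=0$ to keep mass in a neighbourhood (the paper phrases it by pulling back $\cU$ under $\Phi^i_{-t}$, you by forward uniform continuity, which is equivalent), and the second uses $\mu=(\mu\tK)Q$ together with positivity of $Q(\cdot,i,i)$, exactly as in the paper (which gets by with pointwise positivity of $Q(x,i,i)$ where you invoke a uniform lower bound by compactness). The only cosmetic remark is that $\mu=(\mu\tK)Q$ comes directly from $\tP=\tK Q$ and $\mu\tP=\mu$ rather than from Proposition~\ref{homeo}, but this changes nothing.
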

\begin{proof} Let $(x,i) \in \supp(\mu)$ and let $\cU$ be a neighborhood of $x$.
Then for $t_0 > 0$ small enough and $0 \leq t \leq t_0$,  $\Phi^i_{-t}(\cU)$ is
also a neighborhood of $x$.  Thus
\[
(\mu \tK)(\cU \times \{i\}) = \int_0^\infty\! \lambda e^{-\lambda t} \mu (\Phi^i_{-t}(\cU)\times \{i\})\, dt
\geq \lambda \int_0^{t_0} \!e^{-\lambda t} \mu (\Phi^i_{-t}(\cU)\times \{i\})\, dt > 0.
\]
This proves that $\supp( \mu) \subset  \supp(\mu \tK)$. Conversely, let 
$\nu = \mu \tK$ and $(x,i) \in \supp(\nu)$ and let $\cU$ be a neighborhood of $x$.
Then
\[
\mu(\cU\times \{i\}) = (\nu Q)(\cU \times\{i\}) = \sum_{j\in E} \int_\cU Q_{ji}(x) \nu(dx \times \{j\})
\geq \int_\cU Q_{ii}(x) \nu(dx \times \{i\}) > 0.
\]
As a consequence, $\supp( \mu) \supset  \supp(\mu \tK)$.
\end{proof}

%%%%%%%%
\subsection{Law of pure types}

Let $\lambda_M$ and $\lambda_{M\times E}$ denote the Lebesgue measures on $M$ and 
$M \times E$.

\begin{prop}[Law of pure types]
Let $\mu \in \tilde{{\mathcal P}}_{inv} $  (respectively ${{\mathcal P}}_{inv}$)
and  let $\mu = \mu_{ac} + \mu_s$ be the Lebesgue decomposition of $\mu$
with $\mu_{ac}$ the absolutely continuous (with respect to $\lambda_{M \times E}$)
measure and $\mu_s$ the singular (with respect to $\lambda_{M \times E}$) measure.
Then both $\mu_{ac}$ and $\mu_s$ are in $\tilde{{\mathcal P}}_{inv}$
(respectively ${{\mathcal P}}_{inv}$), up to a multiplicative constant.  In particular, if $\mu$ is ergodic,
then $\mu$ is either absolutely continuous or singular.
\end{prop}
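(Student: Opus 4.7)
The plan is to reduce the whole statement to one structural observation: both $\tP$ and $P_t$ send measures absolutely continuous with respect to $\lambda_{M\times E}$ to absolutely continuous measures. Granted this, the pure-type property follows from a short mass comparison, and the ergodic case is then an immediate consequence of extremality of ergodic probabilities.

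First I would establish the preservation of absolute continuity for the embedded chain kernel $\tP = \tK Q$. The operator $Q$ does not move the spatial coordinate, so if $\nu$ has density $f(\cdot,i)$ then $\nu Q$ has density $\sum_i Q(\cdot,i,j)f(\cdot,i)$ and remains absolutely continuous. For $\tK$, note that on each fiber $\{i\}\subset E$ it is an exponential-time average of push-forwards by the $C^1$-diffeomorphisms $\Phi^i_t$; since push-forward by a $C^1$-diffeomorphism preserves absolute continuity, a Fubini argument shows that the $t$-average is itself absolutely continuous. For the continuous time semigroup I would use the series representation~\eqref{eq:def2Pt}: each summand is a composition of operators $K_s$ (push-forwards by flows) and $Q$, integrated against the joint density of the Poisson arrival times, and the same Fubini argument shows that each term preserves absolute continuity; summability follows from the Poisson probabilities $e^{-\lambda t}(\lambda t)^n/n!$.

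Granted this, the proof concludes quickly. Writing $\mu = \mu_{ac} + \mu_s$ and denoting by $P$ either $\tP$ or $P_t$, invariance gives $\mu_{ac}+\mu_s = \mu_{ac}P + \mu_s P$. Since $\mu_{ac}P$ is absolutely continuous, uniqueness of the Lebesgue decomposition of $\mu$ yields $\mu_{ac} = \mu_{ac}P + (\mu_s P)_{ac}$; the remainder $(\mu_s P)_{ac}$ being a positive measure forces $\mu_{ac}\geq \mu_{ac}P$ as measures. But both have total mass $\mu_{ac}(M\times E)$ since $P$ is Markov, so equality holds: $\mu_{ac}P = \mu_{ac}$, and therefore $\mu_s P = \mu_s$ as well. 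For the ergodicity assertion, if $\mu$ were ergodic with both $\mu_{ac}$ and $\mu_s$ nonzero, then normalizing them would express $\mu$ as a non-trivial convex combination of the two distinct invariant probabilities $\mu_{ac}/\mu_{ac}(M\times E)$ and $\mu_s/\mu_s(M\times E)$, contradicting extremality.

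The main technical point I expect to require care is the absolute continuity preservation by $P_t$, since one must handle simultaneously the random number of Poisson jumps before time $t$ and the corresponding simplex of jump times; once the push-forwards by individual flows are seen to preserve absolute continuity, however, the whole calculation reduces to a routine Fubini argument with a dominated summation, and no serious obstacle remains. Note that one cannot deduce the result for $P_t$ from that for $\tP$ via the correspondence of Proposition~\ref{homeo}, because $\tK$ may destroy singularity (e.g.\ in dimension one, the push-forward of a Dirac under $\tK$ is absolutely continuous along the flow line); so the two cases must be treated in parallel, each relying on its own preservation-of-ac statement.
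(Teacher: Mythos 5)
Your argument follows the same route as the paper's: the key observation is that $\tP$ (and $P_t$) map absolutely continuous measures to absolutely continuous ones, after which the invariance of $\mu_{ac}$ and $\mu_s$ follows from a mass comparison under uniqueness of the Lebesgue decomposition, a step the paper isolates as Lemma~\ref{typepur}. The only substantive difference is that you explicitly verify the preservation of absolute continuity for $P_t$ as well — the paper's proof only invokes the lemma for $\tP$, leaving the continuous-time case implicit — and your remark that the homeomorphism of Proposition~\ref{homeo} cannot be used to transfer the conclusion, since $\tK$ may destroy singularity, correctly identifies why the two cases must be treated in parallel rather than deduced one from the other.
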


\begin{proof}
The key point is that $\tK$ and $Q$, hence $\tP = \tK Q,$ map
absolutely continuous measures into absolutely continuous measures.
For $\mu \in \tilde{{\mathcal P}}_{inv}$ the result now follows from the
following simple Lemma \ref{typepur} applied to $\tP.$
%For $\mu \in {{\mathcal P}}_{inv}$ the result follows from Lemma \ref{typepur} and Proposition \ref{homeo}.
\end{proof}

\begin{lem}
\label{typepur}
Let $(\Omega, \mathcal{A},P)$ be a probability space. Let $\mathcal{M}$ (respectively
$\mathcal{M}^+, \mathcal{P}, \mathcal{M}_{ac})$ denote the set of
signed (positive, probability, absolutely continuous with respect to $P$) measures on $\Omega$.
Let $K : \mathcal{M} \to \mathcal{M}, \mu \mapsto \mu K$ be a linear
map that maps each of the preceding sets into itself. Then if
$\mu \in \mathcal{P}$ is a fixed point for $K$ with Lebesgue decomposition
$\mu = \mu_{ac} + \mu_s$, both $\mu_{ac}$ and $\mu_s$ are fixed points for $K$.
\end{lem}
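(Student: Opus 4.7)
My plan is to use the uniqueness of the Lebesgue decomposition together with the mass-preserving property of $K$. First I would note that since $K$ maps $\cP$ into itself, it preserves total mass on $\cM^+$: indeed, any nonzero $\nu\in\cM^+$ can be normalized to a probability $\nu/\nu(\Omega)$, and applying $K$ to it gives another probability, so $(\nu K)(\Omega)=\nu(\Omega)$. In particular, because $\mu$ is a probability, both $\mu_{ac}$ and $\mu_s$ are finite positive measures, and the measures $\mu_{ac}K$ and $\mu_s K$ have the same total masses as $\mu_{ac}$ and $\mu_s$ respectively.

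Next I would exploit the hypothesis that $K$ preserves $\cM_{ac}$. Writing the fixed point equation as
\[
\mu_{ac}K + \mu_s K = \mu K = \mu = \mu_{ac} + \mu_s,
\]
I would introduce the Lebesgue decomposition $\mu_s K = (\mu_s K)_{ac} + (\mu_s K)_s$. Since $\mu_{ac}K\in\cM_{ac}$ by hypothesis, the absolutely continuous and singular parts of the left-hand side are $\mu_{ac}K + (\mu_s K)_{ac}$ and $(\mu_s K)_s$ respectively. By uniqueness of the Lebesgue decomposition applied to the equality above, I get
\[
\mu_{ac}K + (\mu_s K)_{ac} = \mu_{ac}
\quad\text{and}\quad
(\mu_s K)_s = \mu_s.
\]

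Finally I would close the argument by a mass comparison. Evaluating the second identity on $\Omega$ yields $(\mu_s K)_s(\Omega) = \mu_s(\Omega) = (\mu_s K)(\Omega)$, so $(\mu_s K)_{ac} = 0$ as a positive measure, i.e.\ $\mu_s K = \mu_s$. Plugging this back into the first identity of the previous display gives $\mu_{ac}K = \mu_{ac}$. Both components are therefore fixed points of $K$.

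The argument is almost entirely bookkeeping, so there is no genuine obstacle; the one point to be careful about is the initial observation that $K$ preserves mass on all of $\cM^+$, which is not stated directly in the hypotheses but follows from the preservation of $\cP$ together with linearity. Everything else is a direct application of the uniqueness of the Lebesgue decomposition.
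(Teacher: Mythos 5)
Your proof is correct and follows essentially the same route as the paper: uniqueness of the Lebesgue decomposition applied to $\mu K=\mu$ together with mass conservation of $K$ on positive measures, which is deduced exactly as in the paper by normalizing to a probability and using that $K$ preserves $\mathcal{P}$. The only (cosmetic) difference is that you close the argument on the singular component, showing $(\mu_s K)_{ac}$ has zero mass, while the paper compares $\mu_{ac}$ with $\mu_{ac}K$ (domination plus equal total mass); both reductions are equivalent.
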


\begin{proof}
Write $\mu K = \mu_{ac} K + \mu_s K = \mu_{ac} K + \nu_{ac} + \nu_{s}$
with $\mu_s K = \nu_{ac} + \nu_s$ the Lebesgue decomposition of
$\mu_s K$. Then, by uniqueness of the decomposition,
$\mu_{ac} = \mu_{ac} K + \nu_{ac}$. Thus, $\mu_{ac} \geq \mu_{ac} K$.
Now either $\mu_{ac} = 0$ and there is nothing to prove or, we can
normalize by $\mu_{ac}(\Omega)$ and we get that  $\mu_{ac} = \mu_{ac} K$.
\end{proof}

%%%%%%%%%%%%%%%%%%%%%%%%%%%%%%%%%%%%

\section{Supports and accessibility}

\subsection{Support of the law of paths}
\label{sec:law}

In this section, we describe the shape of the support of the distribution of ${(X_t)}_{t\geq 0}$ 
and we show that it can be linked to the set of solutions of a differential inclusion (which is a 
generalisation of ordinary differential equations). 

Let us start with a definition that will prove useful to encode the paths
of the process~$Z_t$. 
\begin{defi}[Trajectories and adapted sequences]
  \label{def:adapted}
For all $n  \in \mathbb{N}^*$ let 
\[
\TT_n = \BRA{(\bi,\bu)=((i_0, \ldots, i_n),(u_1, \ldots, u_n)) \in E^{n+1} \times \Rpn}
\]
and 
\[
\TT^{i,j}_n = \BRA{(\bi,\bu)\in\TT_n\ :\ i_0=i,\, i_n=j}.
\]
Given $(\bi,\bu) \in \TT_n$ and $x \in M$, define $(x_k)_{0\leq k\leq n}$ by induction
by setting $x_0 = x$ and $x_{k+1} = \Phi^{i_{k-1}}_{u_k}(x_k)$: these are the points 
obtained by following $F^{i_0}$ for a time $u_0$, then $F^{i_1}$ for a time $u_1$, etc.

We also define a corresponding continuous trajectory.  Let $\bt=(t_0,\ldots,t_n)$ be defined  by 
$t_0=0$ and $t_k=t_{k-1}+u_k$ for $k=1,2,\ldots,n$ and let ${(\eta_{x,\bi,\bu}(t))}_{t\geq 0}$ 
be the function ${(\eta(t))}_{t\geq 0}$ given by 
\begin{equation}\label{def:etaiu}
\eta_{x,\bi,\bu}(t) = \eta(t)=
\begin{cases}
x&\text{if }t=0,\\
\Phi^{i_{k-1}}_{t-t_{k-1}}(x_{k-1}) &\text{if }t_{k-1}< t\leq t_k \text{ for }k=1,\ldots,n,\\ 
\Phi^{i_n}_{t-t_n}(x_n) &\text{if }t>t_n.
\end{cases}
\end{equation}
%The function $\eta_{x,\bi,\bu}$ is the path associated to jump times $\bt$ 
%(or interjump times $\bu$) and discrete component $\bi$. 
Finally, let 
\(
%\label{defp}
p(x,\bi,\bu) = Q(x_1,i_0,i_1) Q(x_2,i_1,i_2)
\cdots Q(x_n,i_{n-1},i_n),
\)
 and
\[
\dT_{n,\mathrm{ad}(x)}=\BRA{(\bi,\bu)\in\TT_n\ :\ p(x,\bi,\bu) > 0}.
\]
An element of $\dT_{n,\mathrm{ad}(x)}$ is said to be  \emph{adapted to}
$x \in M$.
\end{defi}

\begin{lem}\label{le:tube}
Let $(\bi,\bu)\in\TT_n$. Then, for any $(x,i)\in M\times E$, any $T\geq 0$,  
and any $\delta >0$, 
\[
  \prb[x,i]{\sup_{0\leq t\leq T}\NRM{X_t-\eta_{x,\bi,\bu}(t)}\leq \delta}>0.
\]
\end{lem}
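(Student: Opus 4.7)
I plan to prove the lemma by induction on $n$, strengthening the statement so that, in addition, $Y_T=i_n$ whenever $T\geq t_n$; this extra piece of information is what lets me chain the induction step together via the strong Markov property. Throughout, I exploit two tools: first, that $U_1,U_2,\ldots$ are i.i.d.\ exponentials, so $\dP(U_k\in I)>0$ for any interval $I\subset\dR_+$ of positive length; second, that for every $y\in M$ the matrix $\lambda(y,\cdot,\cdot)$ is irreducible and continuous in $y$, so positive-rate paths in the discrete component are available at $y$ and persist on a neighborhood.

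For the base case $n=0$ one has $\eta(t)=\Phi^{i_0}_t(x)$. If $i=i_0$, the event $\{T_1>T\}$ has probability $e^{-\lambda T}>0$ and on it $X_t=\Phi^{i_0}_t(x)$ identically. If $i\neq i_0$, I use irreducibility to pick a positive-rate chain $i=j_0\to j_1\to\cdots\to j_m=i_0$ at $x$, a small $\tau>0$, and consider the event $\{U_1,\ldots,U_m\leq\tau/m,\ \tilde Y_\ell=j_\ell\ (\ell\leq m),\ U_{m+1}>T\}$. Continuity of $Q$ keeps $Q(\tilde X_\ell,j_{\ell-1},j_\ell)$ uniformly bounded below (all $\tilde X_\ell$ lie in $B(x,C_{sp}\tau)$), so the event has positive probability; on it $X_t$ drifts by at most $C_{sp}\tau$ on $[0,T_m]$ and then follows $\Phi^{i_0}$ from a point $C_{sp}\tau$-close to $x$, so that $\sup_{t\leq T}\|X_t-\Phi^{i_0}_t(x)\|=O(\tau)$ by smoothness of the flow, which is $\leq\delta$ for $\tau$ small.

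For the inductive step I split at $t_{n-1}$. If $T\leq t_{n-1}$, the hypothesis applied to $(\bi_{\leq n-1},\bu_{\leq n-1})\in\TT_{n-1}$ with time $T$ suffices, as the two curves $\eta$ coincide on $[0,T]$. If $T\geq t_{n-1}$, I apply the hypothesis with time $t_{n-1}$ and tolerance $\delta'>0$ to obtain a positive-probability event $A_1\in\cF_{t_{n-1}}$ on which $\|X_{t_{n-1}}-x_{n-1}\|\leq\delta'$ and $Y_{t_{n-1}}=i_{n-1}$. The strong Markov property at $t_{n-1}$ then reduces the remainder to a positive lower bound, uniform in $y\in B(x_{n-1},\delta')$, of
\[
\prb[y,i_{n-1}]{\sup_{s\leq T-t_{n-1}}\NRM{X_s-\eta_{x_{n-1},(i_{n-1},i_n),(u_n)}(s)}<\delta-\delta',\ Y_{T-t_{n-1}}=i_n}.
\]
This is a one-extra-segment problem handled exactly by the base-case techniques (follow $F^{i_{n-1}}$ for a time close to $u_n$, transition to $i_n$ either directly when $Q(\cdot,i_{n-1},i_n)>0$ near $x_n$ or through a short indirect jump-chain provided by irreducibility at $x_n$, then stay in $i_n$). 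Positivity on a neighborhood of $x_{n-1}$ follows from the Feller property of the semigroup, and choosing $\delta'$ small controls, by a Gronwall estimate on the flows, the drift between $\eta_{y,\ldots}$ and $\eta_{x_{n-1},\ldots}$, so that the total error stays $\leq\delta$.

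The main obstacle is precisely that the statement holds for all $(\bi,\bu)\in\TT_n$, not only for sequences adapted to $x$, so the naive direct-transition product $\prod_k Q(x_k,i_{k-1},i_k)$ may vanish. The remedy is the indirect-path trick already used in the base case: at each offending index $k$ one inserts a quick detour through intermediate discrete states furnished by the irreducibility of $\lambda(x_k,\cdot,\cdot)$. The price is that $X_t$ briefly follows the ``wrong'' vector fields and drifts; the main bookkeeping consists in choosing the durations of every detour and the widths of all interarrival windows small enough that the accumulated drifts sum to at most $\delta$, while none of the positive probabilities (of the interarrival ranges or of the transitions $Q$ read at slightly perturbed points) are destroyed. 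Continuity of $Q$ and the uniform speed bound $C_{sp}$ on the compact set $M$ are what make all these estimates uniform and close the argument.
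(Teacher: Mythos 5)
Your proof is correct, and the ingredients are the same as the paper's: positivity of the exponential law on intervals, irreducibility and aperiodicity of $Q$ to insert short detour jump-chains when a transition $i_{k-1}\to i_k$ has zero rate, and the uniform speed bound $C_{sp}$ together with a Gronwall estimate to control the resulting drift. What differs is the packaging. The paper does not induct: it first treats the adapted case with $i_0=i$ by a single explicit computation on the event $\{\max_l|U_l-u_l|\leq\delta_1,\ U_{n+1}>T-t_n+\delta_1,\ (\tilde Y_0,\ldots,\tilde Y_n)=\bi\}$, obtaining the product lower bound $\delta_2\prod_l(e^{-\lambda(u_l-\delta_1)}-e^{-\lambda(u_l+\delta_1)})e^{-\lambda(T-t_n+\delta_1)}$; and then handles a general $(\bi,\bu)$ by first replacing it with a nearby \emph{adapted} $(\bj,\bv)\in\TT_{n',\mathrm{ad}(x)}$ with $j_0=i$ whose trajectory $\eta_{x,\bj,\bv}$ is uniformly $\delta$-close to $\eta_{x,\bi,\bu}$ (this is exactly your detour-insertion trick, done once for the whole path). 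Your inductive scheme with the Markov property at the deterministic time $t_{n-1}$ (note: plain, not strong, Markov property) reaches the same product lower bound by recursion. The one place where you are slightly loose is the appeal to the Feller property to get a lower bound uniform in $y\in B(x_{n-1},\delta')$: Feller continuity concerns $P_tf$ for continuous $f$, whereas the quantity you need concerns the indicator of a path-event. The intended uniform positivity is nevertheless true, but it is cleaner to get it directly as the paper does, by observing that the explicit lower bound above depends continuously on the starting point through $Q$ and the flow, and is therefore bounded away from $0$ on a neighborhood of $x_{n-1}$. With that small substitution, your argument is a valid, if somewhat longer, alternative to the paper's one-shot estimate.
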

\begin{proof}
Suppose first that $(\bi,\bu)\in\TT_{n}$ is adapted to $x$ and that $\bi$ starts at $i$. 
By continuity, there exist $\delta_1$ and $\delta_2$ 
such that 
\[
\max_{i = 1,\ldots, n} |s_i-u_i| \leq \delta_1
\quad\Longrightarrow \quad
\begin{cases}
  \sup_{0\leq t \leq T} \NRM{\eta_{x,\bi,\bs}(t) - \eta_{x,\bi,\bu}(t)} \leq \delta, \\
p(x,\bi,\bs)\geq \delta_2.
\end{cases}
\]
Let $(U_1,\ldots,U_n,U_{n+1})$ be $n+1$ independent random variables with an 
exponential law of parameter $\lambda$ and $\bU=(U_1,\ldots,U_n)$. Then,
\begin{align*}
  &\prb[x,i] {\sup_{0\leq t\leq T}\NRM{X_t-\eta_{x,\bi,\bu}(t)}\leq \delta}\\
  &\qquad
  \geq \prb{\max_{l = 1,\ldots, n} |U_l-u_l | \leq \delta_1,
            \ U_{n+1}\geq T-t_n+\delta_1, 
            (\tilde Y_0,\ldots,\tilde Y_n)=\bi
	  }\\
  &\qquad
  \geq  \delta_2\prb{ \max_{l = 1,\ldots, n} |U_l-u_l | \leq \delta_1,\ 
                      U_{n+1}\geq T-t_n+\delta_1
		    }\\
  &\qquad
  \geq \delta_2 \SBRA{\prod_{l=1}^n \PAR{e^{-\lambda(u_l-\delta_1)}-e^{-\lambda(u_l+\delta_1)}}} 
                e^{-\lambda (T-t_n+\delta_1)} 
  > 0.
\end{align*}
In the general case,  $(\bi,\bu)\in\TT_n$ is not necessarily adapted and may start at
an arbitrary $i_0$. However, for any $T>0$ and $\delta>0$, there exists 
$(\bj,\bv)\in\TT_{n',\mathrm{ad}(x)}$ (for some $n'\geq n$) such that $j_0=i$ and  
\[
\sup_{0\leq t\leq T}\NRM{\eta_{x,\bj,\bv}(t)-\eta_{x,\bi,\bu}(t)}\leq \delta,
\]
since $Q(x)$ is, by construction, irreducible and aperiodic (this allows to 
add permitted transitions from $i$ to $i_0$ and where ${\bi}$ has not permitted transitions, 
with times between the jumps as small as needed). 
\end{proof}

After these useful observations, we can describe the support of the law of  ${(X_t)}_{t \geq 0}$  
in terms of a certain differential inclusion induced by the vector fields $\{F^i\, : \, i\in E\}$.

For each $x \in \dR^d,$ let  $\mathrm{co}(F)(x) \subset \dR^d$ be the compact
convex  set defined as
\[
  \mathrm{co}(F)(x) = \BRA{ \sum_{i \in E } \alpha_i F^i(x) :
  \: \alpha_i \geq 0, \sum_{i \in E} \alpha_i = 1  }.
\]
Let $\cC(\dR_+,\dR^d)$ denote the set of continuous paths
$\eta : \dR_+ \mapsto \dR^d$ equipped with the topology of uniform
convergence on compact intervals. A solution to the differential inclusion
\begin{equation}
\label{di}
\dot{\eta} \in \mathrm{co}(F)(\eta)
\end{equation}
is an absolutely continuous function $\eta \in \cC(\dR_+,\dR^d)$ such that
$\dot{\eta}(t) \in \mathrm{co}(F)(\eta(t))$ for almost all $t \in \dR_+$.
We let $S^x \subset \cC(\dR_+,\dR^d)$ denote the set of solutions to \eqref{di}
with initial condition~$\eta(0)=x$. 

\begin{lem}\label{Sx}
 The set $S^x$  is a non empty compact connected set.
\end{lem}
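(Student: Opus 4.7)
The plan is to check the three properties in turn: non-emptiness is immediate, compactness is a standard application of Arzelà--Ascoli plus a closure lemma, and connectedness, which is the main difficulty, follows from the Kneser--Hukuhara theorem for differential inclusions.

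For non-emptiness, pick any $i\in E$; since $F^i(y)\in\mathrm{co}(F)(y)$ for every $y$, the trajectory $t\mapsto\Phi^i_t(x)$ already belongs to $S^x$.

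For compactness, I would first observe that any $\eta\in S^x$ satisfies $\NRM{\dot\eta(t)}\leq C_{sp}$ for almost every $t$, so $\eta$ is $C_{sp}$-Lipschitz. Together with $\eta(0)=x$, this gives equicontinuity and pointwise boundedness of the family $S^x$ on every compact interval $[0,T]$. Arzelà--Ascoli yields relative compactness in $\cC([0,T],\dR^d)$, and a diagonal extraction extends this to $\cC(\Rp,\dR^d)$ equipped with its local uniform topology. To see that $S^x$ is closed, suppose $\eta_n\in S^x$ converges uniformly on compacts to some $\eta$. The $\eta_n$ are uniformly $C_{sp}$-Lipschitz so the limit is also $C_{sp}$-Lipschitz, hence absolutely continuous. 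The derivatives $\dot\eta_n$ are bounded in $L^\infty_{\mathrm{loc}}$, so a subsequence converges to $\dot\eta$ weakly in $L^1_{\mathrm{loc}}$. By Mazur's lemma, convex combinations of the $\dot\eta_n$ converge to $\dot\eta$ almost everywhere, and since $\mathrm{co}(F)$ is convex-valued and continuous (as a convex hull of finitely many continuous maps), one concludes $\dot\eta(t)\in\mathrm{co}(F)(\eta(t))$ almost everywhere, so $\eta\in S^x$.

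The hardest part will be connectedness, where I would invoke the Kneser--Hukuhara theorem: the solution funnel of a differential inclusion with upper semicontinuous, compact, convex-valued right-hand side is connected. The classical argument regularises $\mathrm{co}(F)$ by a sequence of smooth selections $F_\varepsilon$ (typically by convolution, exploiting the convex values to keep $F_\varepsilon(y)\in\mathrm{co}(F)(y)+\varepsilon B$), so that the corresponding ODE has a unique solution $\eta_\varepsilon^x$ and thus a trivially connected (singleton) solution set. Writing $S^x$ as a Kuratowski limit of compact connected approximating solution sets, and using the fact that a decreasing intersection of compact connected sets is connected, yields the result. The delicate point is the continuous interpolation between any two solutions of $S^x$ through the approximating family, which is precisely where the convexity of $\mathrm{co}(F)(y)$ enters in an essential way.
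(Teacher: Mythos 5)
Your proposal is correct and is essentially the paper's argument: the paper simply observes that $\mathrm{co}(F)$ is upper semicontinuous and bounded with nonempty compact convex values and cites the standard theory of differential inclusions (Aubin--Cellina) for exactly the facts you spell out (Lipschitz bounds plus Arzel\`a--Ascoli and the convexity-based closure argument for compactness, a Kneser--Hukuhara-type theorem for connectedness). One small caveat: your sketched proof of connectedness via single smooth selections with singleton solution sets would not by itself recover $S^x$ as a decreasing intersection (the classical argument uses approximating Lipschitz \emph{multimaps} whose solution sets are compact and contractible), but since you invoke the theorem itself this does not affect the validity of the proof.
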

 \begin{proof}
Follows from standard results  on differential inclusion, since the set-valued
map $\mathrm{co}(F)$ is  upper-semi continuous, bounded  with  non empty
compact convex images; see~\cite{AubCel84} for details.
\end{proof}

%For $I = [0,T]$ with $T > 0,$ or $I = \dR^+$ we let
%$\pi_I : \cC(\dR, \dR^d) \mapsto \cC(I,\dR^d)$ denote  the canonical
%projection defined by $\pi_I(\eta)(s) = \eta(s);$ and we set $S^x_I = \pi_I(S^x).$

\begin{thm}\label{supportlaw}
If $X_0=x\in M$ then,  the support of the law of ${(X_t)}_{t \geq 0}$
equals $S^x$.
\end{thm}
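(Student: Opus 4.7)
The plan is to prove the two inclusions $\supp(\mathcal{L}(X))\subset S^x$ and $S^x\subset\supp(\mathcal{L}(X))$ separately, working with the Polish topology of uniform convergence on compact time intervals on $\cC(\dR_+,\dR^d)$.

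For the easy inclusion, I would note that between jumps of $Y$ the trajectory $t\mapsto X_t$ follows one of the flows $\Phi^i$, so $X$ is almost surely absolutely continuous with $\dot X_t = F^{Y_t}(X_t) \in \mathrm{co}(F)(X_t)$ at every non-jump time; hence $X\in S^x$ almost surely. Since $S^x$ is compact, hence closed, by Lemma~\ref{Sx}, the (closed) support of the law of $X$ is contained in $S^x$.

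For the reverse inclusion, I would proceed in two steps. First, Lemma~\ref{le:tube} shows that every pure path $\eta_{x,\bi,\bu}$, for $(\bi,\bu)\in\TT_n$, lies in $\supp(\mathcal{L}(X))$: the basic neighborhoods of $\eta_{x,\bi,\bu}$ for the uniform-on-compacts topology are of the form $\BRA{f\,:\,\sup_{[0,T]}\NRM{f-\eta_{x,\bi,\bu}}\leq\delta}$, and each of these has positive probability under $\mathcal{L}(X)$. Since the support is closed, it then suffices to prove that the family $\BRA{\eta_{x,\bi,\bu}}$ is dense in $S^x$ for this topology. This amounts to a Filippov-type relaxation statement for the inclusion~\eqref{di}: given $\eta\in S^x$ and $T>0$, a measurable-selection argument yields measurable coefficients $\alpha_i:[0,T]\to[0,1]$ with $\sum_{i\in E}\alpha_i(t)=1$ and $\dot\eta(t)=\sum_{i\in E}\alpha_i(t)F^i(\eta(t))$ for almost every $t$. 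Partitioning $[0,T]$ into $N$ subintervals of length $h=T/N$ and, on each subinterval, following the vector fields $F^i$ in succession for durations proportional to the averages of the $\alpha_i$, one produces a pure path that should converge uniformly to $\eta$ as $h\to 0$.

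The main obstacle lies in this relaxation step. On each subinterval the $F^i$ are bounded and Lipschitz and positions move by $O(h)$, so a direct expansion shows that rapid switching between the $F^i$ in proportions $\alpha_i$ reproduces the average drift $\sum_i \alpha_i F^i(\eta(kh))$ with error $O(h^2)$ per subinterval; summed over the $N=T/h$ subintervals and combined with a Gronwall-type estimate this gives uniform convergence on $[0,T]$. The argument is standard in differential-inclusion theory (cf.\ \cite{AubCel84}), and once it is in place, combining the two inclusions yields $\supp(\mathcal{L}(X))=S^x$.
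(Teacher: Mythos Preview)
Your proposal is correct and follows the same overall architecture as the paper --- the easy inclusion via $S^x$ being closed, then Lemma~\ref{le:tube} to place every $\eta_{x,\bi,\bu}$ in the support, then density of pure paths in $S^x$ --- but the density step is handled differently.

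The paper does not use your averaging/chattering construction. Instead it first invokes a selection theorem of Papageorgiou to approximate an arbitrary $\eta\in S^x$ by a \emph{bang--bang} solution $\xi$ satisfying $\dot\xi(t)\in\{F^i(\xi(t)):i\in E\}$ (rather than the convex hull). It then writes $[0,T]=\bigcup_i\Omega_i$ with $\Omega_i=\{t:\dot\xi(t)=F^i(\xi(t))\}$, approximates each $\Omega_i$ in Lebesgue measure by a finite union of intervals, and controls the resulting error by Gronwall. Your route bypasses the Papageorgiou reference entirely: you keep the convex representation $\dot\eta=\sum_i\alpha_i F^i(\eta)$, average the weights over subintervals of length $h$, and switch rapidly. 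This is the classical relaxation/chattering argument and is more self-contained, at the cost of a two-scale error estimate (per-step $O(h^2)$, then discrete Gronwall) that the paper avoids by first passing to bang--bang. Conversely, the paper's approach isolates the measurable-selection difficulty in a single citation and leaves only elementary measure-theory afterward. Either way works.
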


\begin{proof}
Obviously, any path of $X$ is a solution of the differential inclusion~\eqref{di}.
Let $\eta \in S^x$, and $\varepsilon>0$. Set
\[
   G_t(x) =
   \BRA{v \in  \BRA{F^i(x),\ i \in E}\,:\,
     \langle v - \dot{\eta}(t), x - \eta(t) \rangle < \varepsilon}.
\]
Since $\dot{\eta}(t) \in \mathrm{co}(F)(\eta(t))$ almost surely, $G_t(x)$ is non
empty. Furthermore, $(t,x) \mapsto G_t(x)$ is uniformly bounded,
lower semicontinuous in $x,$ and measurable in $t.$
Hence, using a result by Papageorgiou \cite{Pap86},
there exists $\xi : \dR \to \dR^d$ absolutely continuous such that
$\xi(0) = x$ and $\dot{\xi}(t) \in G_t(\xi(t))$ almost surely. In particular,
\[
\frac{d}{dt} \|\xi(t)-\eta(t)\|^2=2\langle\dot{\xi}(t) -  \dot{\eta}(t),\xi(t)-\eta(t)\rangle
< 2 \varepsilon
\]
so that
\[
  \sup_{0 \leq t \leq T} \|\xi(t)-\eta(t)\|^2 \leq 2 \varepsilon T.
\]
Thus, without loss of generality, one can assume that $\eta$ is such that 
\[
\dot{\eta}(t) \in \bigcup_{i \in E} \BRA{ F^i(\eta(t))}
\]
for almost all $t \in \dR_+$. Set
\[
\forall i\in E, \qquad
\Omega_{i} = \BRA{  t \in [0,T]  \: :
                       \dot{\eta}(t) = F^{i}(\eta(t))}.
\]
Let $\mathcal{C}$ be the algebra consisting of finite unions of intervals
in $[0,T]$. Since the Borel $\sigma$-field over $[0,T]$ is generated by
$\mathcal{C}$, there exists, for all $i\in E$, $J_i \in \mathcal{C}$
such that, the set  $\{J_i \: : i\in E\}$ forms a partition of $[0,T]$, and for $i\in E$
\[
\lambda (\Omega_i \Delta J_i) \leq \varepsilon  
\]
where $\lambda$ stands for the Lebesgue measure over $[0,T]$ 
and $A \Delta B $ is the symmetric difference of $A$ and $B$. Hence,
there exist numbers $0 = t_0 < t_1 < \cdots < t_{N+1} = T$
and a map $i : k \mapsto i_k$ from $\{0,\ldots,N \}$ to $\{1,\ldots,n_0\}$, 
such that $(t_k,t_{k+1}) \subset J_{i_k}$.
Introduce $\eta_{x,\bi,\bu}$ given by Formula \eqref{def:etaiu}.
For all $t_k \leq t \leq t_{k+1}$,
\begin{align*}
\eta_{x,\bi,\bu}(t) - \eta(t) =&\eta_{x,\bi,\bu}(t_k) - \eta(t_k)\\
&    +  \int_{t_k}^t\!\PAR{F^{i_k}(\eta_{x,\bi,\bu}(s)) - F^{i_k}(\eta(s))}\,ds
  + \int_{t_k}^t\!\PAR{ F^{i_k}(\eta(s)) - \dot{\eta}(s)}\,ds.
\end{align*}
Hence, by Gronwall's lemma, we get that
\[
\|\eta_{x,\bi,\bu}(t) - \eta(t)\| \leq e^{K (t_{k+1}- t_k)} \PAR{ \NRM{\eta_{x,\bi,\bu}(t_k)-\eta(t_k)} + m_k}
\]
where $K$ is a Lipschitz constant for all the vector fields $(F^i)$ and
\[
m_k =
2C_{sp} \lambda([t_k,t_{k+1}] \setminus \Omega_{i_k} ).
\]
It then follows that, for all $k = 0, \ldots, N$ and
 $t_k \leq t \leq t_{k+1},$
\[
 \NRM{\eta_{x,\bi,\bu}(t) - \eta(t)} \leq
 \sum_{l = 0}^k e^{K(t_{k+1}-t_l)}m_l
 \leq e^{KT} \sum_{l = 0}^N m_l
 \leq e^{KT} \sum_{i = 1}^{\ABS{E}} \lambda(J_i \setminus \Omega_i)\leq e^{KT} \varepsilon.
\]
This, with Lemma~\ref{le:tube}, shows that $S^x$ is included in the support of 
the law of ${(X_t)}_{t\geq 0}$ and concludes the proof. 
\end{proof}
In the course of the proof, one has obtained the following result which is stated separately 
since it will be useful in the sequel. 
\begin{lem}\label{bang}
If $\eta:\ \dR_+\to\dR^d$ is  such that 
\[
\dot{\eta}(t) \in \bigcup_{i \in E} \BRA{ F^i(\eta(t))}
\]
for almost all $t \in \dR_+$, then, for any $\varepsilon>0$ and any $T>0$, there 
exists $(\bi,\bu)\in\TT_n$ (for some $n$) such that 
\[
 \NRM{\eta_{x,\bi,\bu}(t) - \eta(t)} \leq\varepsilon.
\] 
\end{lem}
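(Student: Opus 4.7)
The plan is to extract exactly the second half of the proof of Theorem~\ref{supportlaw}, which already establishes this statement as an intermediate step once the reduction to a bang-bang selector has been performed. The hypothesis on $\eta$ here plays the role of the reduced curve obtained from Papageorgiou's selection theorem in that proof.

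Fix $T>0$ and $\varepsilon>0$. First I would define, for each $i\in E$, the measurable set $\Omega_i=\{t\in[0,T]\,:\,\dot\eta(t)=F^i(\eta(t))\}$; by assumption, these sets cover $[0,T]$ up to a Lebesgue-null set. Using the fact that the Borel $\sigma$-field on $[0,T]$ is generated by the algebra $\mathcal C$ of finite unions of intervals, I would approximate the $\Omega_i$'s by a partition $\{J_i\}_{i\in E}$ of $[0,T]$ by elements of $\mathcal C$ with $\lambda(\Omega_i\Delta J_i)\leq \varepsilon'$ for every $i$, where $\varepsilon'$ is a small parameter to be tuned at the end. Refining this partition, I would extract real numbers $0=t_0<t_1<\cdots<t_{N+1}=T$ and a labelling $k\mapsto i_k\in E$ so that each $(t_k,t_{k+1})$ lies entirely in some $J_{i_k}$. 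Setting $u_k=t_k-t_{k-1}$ and $\bi=(i_0,\ldots,i_N)$, $\bu=(u_1,\ldots,u_{N+1})$, this defines an element $(\bi,\bu)\in\TT_{N+1}$ and the corresponding curve $\eta_{x,\bi,\bu}$ from Definition~\ref{def:adapted}.

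Next I would compare $\eta_{x,\bi,\bu}$ with $\eta$ on each interval $[t_k,t_{k+1}]$. Writing
\[
\eta_{x,\bi,\bu}(t)-\eta(t)=\bigl(\eta_{x,\bi,\bu}(t_k)-\eta(t_k)\bigr)+\int_{t_k}^t\!\!\bigl(F^{i_k}(\eta_{x,\bi,\bu}(s))-F^{i_k}(\eta(s))\bigr)\,ds+\int_{t_k}^t\!\!\bigl(F^{i_k}(\eta(s))-\dot\eta(s)\bigr)\,ds,
\]
Gronwall's lemma (with $K$ a common Lipschitz constant of the $F^i$) gives
\[
\NRM{\eta_{x,\bi,\bu}(t)-\eta(t)}\leq e^{K(t-t_k)}\bigl(\NRM{\eta_{x,\bi,\bu}(t_k)-\eta(t_k)}+m_k\bigr),
\]
where $m_k\leq 2C_{sp}\,\lambda([t_k,t_{k+1}]\setminus\Omega_{i_k})$, because on $\Omega_{i_k}\cap(t_k,t_{k+1})$ the integrand $F^{i_k}(\eta)-\dot\eta$ vanishes. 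Iterating over $k$ yields
\[
\sup_{0\leq t\leq T}\NRM{\eta_{x,\bi,\bu}(t)-\eta(t)}\leq e^{KT}\sum_{i\in E}\lambda(J_i\setminus\Omega_i)\leq e^{KT}\,|E|\,\varepsilon'.
\]
Choosing $\varepsilon'=\varepsilon e^{-KT}/|E|$ at the start yields the claimed bound.

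The only mild obstacle is bookkeeping: the hypothesis allows $\eta$ to start from any $i_0\in E$ rather than a prescribed one, but since we do not constrain $\bi$ here, this is automatic. There is no need to invoke the irreducibility of $Q(x)$ or the selection theorem; the content of the lemma is precisely the deterministic tracking argument already contained in the proof of Theorem~\ref{supportlaw}.
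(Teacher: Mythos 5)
Your proposal is correct and is essentially the paper's own argument: the lemma is exactly the second half of the proof of Theorem~\ref{supportlaw}, with the sets $\Omega_i$, the interval-algebra approximation $J_i$, and the Gronwall iteration reproduced faithfully (your explicit choice $\varepsilon'=\varepsilon e^{-KT}/|E|$ is just a slightly more careful bookkeeping of the final bound). Nothing further is needed.
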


\subsection{The accessible set}
\label{sec:acces}
In this section we define and study the  \emph{accessible set} of the process $X$ 
as the set of points that can be ``reached from everywhere'' by $X$ and show that 
long term behavior of $X$ is related to this set. 

\begin{defi}[Positive orbit and accessible set]\ % force le passage à la ligne

For $(\bi,\bu)\in \TT_n$, let $\bphi^\bi_\bu$ be the ``composite flow'':
\begin{equation}
\label{defG}
{\bphi}^{\bi}_{\bu}(x) = \Phi_{u_{n}}^{i_{n-1}} \circ \ldots \circ \Phi^{i_0}_{u_1}(x).
\end{equation}
%\[\gamma^{+}(x,t) = \{\bphi^{\bi}_{\bt}(x) \: : (\bi, \bt) \in \bigcup_{n \in \mathbb{N}_*} \TT_n , \: |\bt| =  t\}.\]
The \emph{positive} orbit of $x$ is the set
\[
\gamma^+(x) =
\BRA{\bphi^{\bi}_{\bu}(x) \: : (\bi, \bu) \in \bigcup_{n \in \mathbb{N}^*} \TT_n }.
\]
%Note that
% \[\gamma^+(x) = \{\eta(t): \: \eta \in W^x, \, t \geq 0\}\]
The \emph{accessible set} of $(X_t)$  is the  (possibly empty) compact
set $\Gamma \subset M$ defined as
\[
\Gamma = \bigcap_{x \in M} \overline{\gamma^+(x)}.
\]
\end{defi}

\begin{rem} If $y=\bphi^{\bi}_{\bu}(x)$ 
for some $(\bi, \bu)$ then $y$ is the limit of points of the form $\bphi^{\bj}_{\bv}(x)$ 
where $({\bj},{\bv})$ is adapted to $x$. It implies that
\begin{equation}\label{adapte}
\Gamma = \bigcap_{x \in M} \overline{\{\bphi^{\bi}_{\bu}(x) \: : (\bi, \bu)\ \text{ adapted to }x  \}}.
\end{equation}
\end{rem}

\begin{rem}
 The accessible set $\Gamma$ is called the set of  $D$-approachable points
 and is denoted by $L$ in \cite{bakhtin&hurt}.
\end{rem}

\subsubsection{Topological properties of the accessible set}
The differential inclusion (\ref{di}) induces a set-valued dynamical
system $\Psi = \{\Psi_t\}$ defined by
\[
\Psi_t(x) = \Psi(t,x) = \{ \eta(t) : \: \eta \in S^x\}
\]
 enjoying the following properties
 \begin{enumerate}[label={\bfseries(\roman*)}]
\item $\Psi_0(x) = \{x\}$,
\item $\Psi_{t+s}(x) = \Psi_t(\Psi_s(x))$ for all $t,s \geq 0$,
\item $y \in \Psi_t(x) \Rightarrow x \in \Psi_{-t}(y)$.
\end{enumerate}
For subsets $I \subset \dR$ and $A\subset \dR^d$ we set
\[
\Psi(I,A) = \bigcup_{(t,x) \in I \times A} \Psi_t(x).
\]
A set $A \subset \dR^d$ is called \emph{strongly} positively invariant
under $\Psi$ if $\Psi_t(A) \subset A$ for $t \geq 0.$
It is called \emph{invariant} if for all $x \in A$ there exists $\eta \in S^x_\dR$
such that $\eta(\dR) \subset A$. Given $x \in \dR^d,$ the \emph{(omega) limit set}
of $x$ under $\Psi$ is defined as
\[
\omega_{\Psi}(x) = \bigcap_{t \geq 0} \overline{\Psi_{[t,\infty[}(x)}.
\]

\begin{lem}
\label{limsetprop}
The set $\omega_{\Psi}(x)$ is compact connected invariant and
strongly positively invariant under $\Psi$.
\end{lem}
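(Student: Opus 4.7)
The strategy splits into three topological/dynamical claims about the decreasing family $K_t:=\overline{\Psi_{[t,\infty)}(x)}$. Since $M$ is positively invariant under each $\Phi^i$ and the support of the law of $X$ starting at $x$ equals $S^x$ (Theorem~\ref{supportlaw}), every solution $\eta\in S^x$ stays in $M$; hence $K_t\subset M$ for all $t\geq 0$. This gives compactness of $\omega_{\Psi}(x)=\bigcap_{t\geq 0}K_t$ for free.

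For connectedness, I would observe that
\[
\Psi_{[t,\infty)}(x)=\{\eta(s)\;:\;\eta\in S^x,\ s\geq t\}
\]
is the image of $S^x\times[t,\infty)$ under the continuous evaluation map $(\eta,s)\mapsto \eta(s)$. By Lemma~\ref{Sx} the set $S^x$ is connected, so each $\Psi_{[t,\infty)}(x)$, and hence each $K_t$, is connected. A decreasing intersection of nonempty compact connected sets in a Hausdorff space is compact connected, so $\omega_\Psi(x)$ is compact and connected.

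For strong positive invariance, pick $y\in\omega_\Psi(x)$, $t\geq 0$ and $z\in\Psi_t(y)$; say $z=\zeta(t)$ for some $\zeta\in S^y$. Choose $\xi_n\in S^x$ and $s_n\to\infty$ with $y_n:=\xi_n(s_n)\to y$. Upper semicontinuity of the set-valued map $\mathrm{co}(F)$ together with the standard continuous-dependence result for differential inclusions (\cite{AubCel84}) furnishes solutions $\zeta_n\in S^{y_n}$ converging to $\zeta$ uniformly on $[0,t]$. Since the right-hand side of~\eqref{di} is autonomous, the concatenation
\[
\tilde\xi_n(s)=\begin{cases}\xi_n(s)&s\leq s_n,\\ \zeta_n(s-s_n)&s\geq s_n,\end{cases}
\]
belongs to $S^x$, and $\tilde\xi_n(s_n+t)=\zeta_n(t)\to z$. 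Hence $z\in\overline{\Psi_{[T,\infty)}(x)}$ for every $T$, i.e.\ $z\in\omega_\Psi(x)$.

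For (two-sided) invariance, given $y\in\omega_\Psi(x)$ I would build an entire solution through $y$ staying in $\omega_\Psi(x)$ by a diagonal compactness argument. With $\xi_n,s_n$ as above, define on $[-T,T]$ the curves $\tilde\xi_n^T(u):=\xi_n(s_n+u)$ as soon as $s_n>T$. These are uniformly bounded by $C_{sp}$ in derivative and live in $M$, so Arzelà–Ascoli extracts a uniform limit $\eta^T$ on $[-T,T]$. The closure property of DI solutions under uniform convergence (again \cite{AubCel84}) yields $\eta^T\in S^{\eta^T(-T)}$, and for each fixed $u\in[-T,T]$ we have $\eta^T(u)=\lim_n \xi_n(s_n+u)\in\bigcap_m\overline{\Psi_{[m,\infty)}(x)}=\omega_\Psi(x)$. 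A standard diagonal extraction across $T\to\infty$ produces a solution $\eta:\dR\to\dR^d$ of~\eqref{di} with $\eta(0)=y$ and $\eta(\dR)\subset\omega_\Psi(x)$, as required.

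The only delicate ingredients are the continuous-dependence and closure-under-uniform-limits properties of the solution map of the autonomous differential inclusion~\eqref{di}; both are classical for upper semicontinuous set-valued maps with nonempty compact convex images, so I would simply invoke the monograph \cite{AubCel84} already cited for Lemma~\ref{Sx}.
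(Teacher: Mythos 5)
Your treatment of compactness, connectedness and two-sided invariance is correct and in fact supplies the details the paper dismisses with ``the first three properties follow from Lemma~\ref{Sx}'': connectedness of each $\overline{\Psi_{[t,\infty)}(x)}$ as the closed image of the connected set $S^x\times[t,\infty)$ under the evaluation map, the nested-compact-connected intersection argument, and the construction of an entire solution inside $\omega_\Psi(x)$ via the $C_{sp}$-Lipschitz bound, Arzel\`a--Ascoli, the closure of the solution set under uniform limits and a diagonal extraction. (Your detour through Theorem~\ref{supportlaw} to see that solutions stay in $M$ is legitimate but unnecessary: solutions of \eqref{di} issued from $M$ are uniform limits of switching trajectories, which stay in $M$.)

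The delicate point is precisely where your justification is wrong as stated: for strong positive invariance you need the \emph{lower} semicontinuity of the solution map, i.e.\ that for $y_n\to y$ and $\zeta\in S^y$ there exist $\zeta_n\in S^{y_n}$ with $\zeta_n\to\zeta$ uniformly on $[0,t]$, and you claim this is ``classical for upper semicontinuous set-valued maps with nonempty compact convex images''. It is not: for merely u.s.c.\ right-hand sides the solution map has closed graph (that is the convergence theorem you use for invariance) but is in general \emph{not} lower semicontinuous --- take $F(x)=\{1\}$ for $x\neq 0$ and $F(0)=[0,1]$; the constant solution at $0$ is not approximated by solutions from nearby initial points. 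Your step is nonetheless true in the present setting, because $\mathrm{co}(F)$ is the convex hull of finitely many smooth, hence locally Lipschitz, vector fields, so it is locally Lipschitz for the Hausdorff distance and Filippov's theorem (also in \cite{AubCel84}) provides exactly the continuous dependence you invoke; the citation must be to that result, not to upper semicontinuity. Note that the paper's own proof sidesteps the issue: it approximates $q\in\Psi_s(p)$ by $\bphi^{\bi}_{\bu}(p)$ via Lemma~\ref{bang} and then only uses continuity of the \emph{fixed} composite flow $\bphi^{\bi}_{\bu}$ in its initial point to map points of $\Psi_{[t,\infty)}(x)$ near $p$ to points near $q$, which keeps the argument elementary. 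With the Filippov reference corrected, your route is a valid alternative.
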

\begin{proof} It is not hard to deduce the first three properties from
Lemma~\ref{Sx}. For the last one, let $p \in \omega_{\Psi}(x), \, s > 0$
and $q \in \Psi_s(p)$. By Lemma~\ref{bang}, for all $\eps > 0$, there exists
$n \in \dN$ and $(\bi,\bu) \in \TT_n$ such that $\NRM{\bphi^{\bi}_{\bu}(p)-q} < \eps$.
Continuity of $\Phi^{\bi}_{\bu}$ makes the set
\[
\cW_\varepsilon = \{z \in M : \: \NRM{\bphi^{\bi}_{\bu}(z)- q} < \eps\}
\]
an open neighborhood of $p$. Hence $\cW_\varepsilon \cap \Psi_{[t,\infty[}(x) \neq \emptyset$
for all $t > 0$. This proves that the distance between the sets $\BRA{q}$ and $\Psi_{[t,\infty[}(x)$ 
is smaller than $\eps$. Since $\eps$ is arbitrary, $q$ belongs to $\overline{\Psi_{[t,\infty[}(x)}$.
\end{proof}

\begin{rem}
For a general differential inclusion with an upper semi-continuous bounded
right-hand side with  compact convex values, the omega limit set of a point
is not (in general)  strongly positively invariant,  see e.g.~\cite{BHS1}.
\end{rem}

\begin{prop}[Properties of the accessible set]
\label{bangbang}
The set $\Gamma$ satisfies the following:
\begin{enumerate}[label={\bfseries(\roman*)}]
  \item \label{intersection}
 $\Gamma = \bigcap_{x \in M} \omega_{\Psi}(x)$,
 \item \label{gammaP}
   $\Gamma = \omega_{\Psi}(p)$ for all $p \in \Gamma$,
 \item \label{compacity}
   $\Gamma$ is compact, connected, 
   invariant and strongly positively invariant under $\Psi$,
 \item \label{interior}
  either $\Gamma$ has empty interior or  its interior is dense in $\Gamma$.
\end{enumerate}
\end{prop}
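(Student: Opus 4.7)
The plan is to address the four items in sequence, relying on Lemma~\ref{bang} (bang--bang approximation of solutions of the differential inclusion), Lemma~\ref{limsetprop} (properties of $\omega$-limit sets), and the fact that each composite flow $\bphi^{\bi}_{\bu}$ is a diffeomorphism of $\dR^d$.

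For item~(i), I would prove both inclusions. To show $\omega_{\Psi}(x)\subset\overline{\gamma^+(x)}$, write $y=\lim_n \eta(t_n)$ with $\eta\in S^x$ and $t_n\to\infty$; Theorem~\ref{supportlaw} (whose proof reduces to Lemma~\ref{bang} via the bang--bang argument) yields composite flows $\eta_{x,\bi^{(n)},\bu^{(n)}}$ uniformly $1/n$-close to $\eta$ on $[0,t_n]$, and evaluating at $t_n$---padding the composite flow with one extra segment if $t_n$ exceeds its total duration---realizes $y$ as a limit of points of $\gamma^+(x)$. For the reverse inclusion, I need any $y\in\Gamma$ to lie in $\omega_{\Psi}(x)$ for every $x$, and this is where the only real trick is needed. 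Given $T>0$, pick any $i\in E$ and set $x'=\Phi^i_T(x)\in M$. Since $y\in\overline{\gamma^+(x')}$, some composite flow $\bphi^{\bj}_{\bv}$ sends $x'$ within prescribed distance of $y$; prepending the segment of mode $i$ and duration $T$ produces a composite flow from $x$---hence a solution in $S^x$---reaching a neighborhood of $y$ at time $T+\sum_l v_l\geq T$, so $y\in\overline{\Psi_{[T,\infty)}(x)}$.

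Items~(ii) and~(iii) should then come almost for free. For $p\in\Gamma$, the inclusion $\Gamma\subset\omega_{\Psi}(p)$ is a case of~(i). For the reverse I would take $q\in\omega_{\Psi}(p)\subset\overline{\gamma^+(p)}$, write it as a limit of $\bphi^{\bj}_{\bv}(p)$, and use $p\in\overline{\gamma^+(x)}$ to approximate $p$ by $\bphi^{\bi}_{\bu}(x)$; continuity of $\bphi^{\bj}_{\bv}$ at $p$ then puts the concatenation $\bphi^{\bj}_{\bv}(\bphi^{\bi}_{\bu}(x))$ near $q$, so $q\in\overline{\gamma^+(x)}$ for every $x$, i.e. $q\in\Gamma$. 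For~(iii), $\Gamma$ is compact as an intersection of closed subsets of the compact $M$; if it is nonempty, (ii) identifies it with $\omega_{\Psi}(p)$ for any $p\in\Gamma$, so connectedness together with invariance and strong positive invariance under~$\Psi$ are all inherited from Lemma~\ref{limsetprop}.

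For item~(iv), I would suppose $U:=\mathrm{int}(\Gamma)\neq\emptyset$ and fix $p\in U$. By~(ii), $\Gamma=\omega_{\Psi}(p)$, so by the argument of~(i) any $q\in\Gamma$ is a limit of composite-flow images $\bphi^{\bi^{(n)}}_{\bu^{(n)}}(p)$. Each $\bphi^{\bi^{(n)}}_{\bu^{(n)}}$ is a diffeomorphism of $\dR^d$, so $\bphi^{\bi^{(n)}}_{\bu^{(n)}}(U)$ is open, and strong positive invariance of $\Gamma$ from~(iii) forces this open set to lie in $\Gamma$, hence in $U$. In particular $\bphi^{\bi^{(n)}}_{\bu^{(n)}}(p)\in U$, exhibiting $q$ as a limit of points of $U$. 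I expect the main obstacle of the whole proof to lie in the converse direction of~(i), where the concatenation trick is needed to stretch a composite flow to arbitrary length; once this is in hand, (ii)--(iv) reduce to manipulating results already established and invoking Lemma~\ref{limsetprop}.
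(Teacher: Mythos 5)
Your proof is correct and follows essentially the same route as the paper: item (i) via the concatenation/prepending trick in one direction and the bang–bang approximation (Lemma~\ref{bang}) in the other, item (ii) via the chaining $\bphi^{\bj}_{\bv}\circ\bphi^{\bi}_{\bu}$ argument that the paper summarizes tersely as ``by Lemma~\ref{bang}'', item (iii) by reducing to Lemma~\ref{limsetprop} through (ii), and item (iv) via the observation that composite flows are diffeomorphisms mapping $\mathrm{int}(\Gamma)$ into itself. The only cosmetic difference is that you spell out some steps (notably the chaining argument in (ii) and the role of strong positive invariance in (iv)) that the paper leaves implicit.
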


\begin{proof}
  \ref{intersection}~: Let $x \in M$ and $y \in \Psi_t(x).$ Then
  $\gamma^+(y) \subset \Psi_{[t,\infty]}(x).$ Hence
  $\Gamma \subset \overline{\Psi_{[t,\infty[}(x)}$ for all $x$. This proves
  that $\Gamma \subset \bigcap_{x \in M} \omega_{\Psi}(x)$. Conversely
  let $p \in \bigcap_{x \in M} \omega_{\Psi}(x).$ Then, for all $t > 0$ and
  $x \in M$, $p \in \overline{\Psi_{[t,\infty[}(x)} \subset \overline{\gamma^+(x)}$
  where the latter inclusion follows from  Lemma~\ref{bang}. This proves
  the converse inclusion.

  \ref{gammaP}~: By Lemma \ref{bang}, $\omega_{\Psi}(p) \subset \Gamma$.
  The converse inequality follows from \ref{intersection}.

  \ref{compacity}~: This follows from~\ref{gammaP} and Lemma~\ref{limsetprop}.

  \ref{interior}~: Suppose $\mathrm{int}(\Gamma) \neq \emptyset$.
  Then there exists an open set $\cU \subset \Gamma$ and   $\bigcup_{\bt,\bi} \bphi_{\bt}^{\bi}(\cU)$
  is an open subset of $\Gamma$ dense in $\Gamma.$
\end{proof}

An equilibrium $p$ for the flow $\Phi^1$ is a point in $M$ such that 
$F^1(p)=0$. It is called an \emph{attracting equilibrium} if there exists a 
neighborhood $\cU$ of $p$ such that
\[
\lim_{t \rightarrow \infty} \|\Phi_t^1(x)-p\| = 0
\]
uniformly in $x \in \cU$. In this case, the \emph{basin of attraction} of $p$
is the open set
\[
\cB(p) = \BRA{x \in \dR^d: \: \lim_{t \rightarrow \infty} \|\Phi_t^1(x)-p\| = 0}.
\]

\begin{prop}[Case of an attracting equilibrium]
\label{attractequil}
Suppose the flow $\Phi^1$ has an attracting equilibrium  $p$ with
basin of attraction $\cB(p)$ that intersects all orbits:  for all $x \in M \setminus \cB(p)$,
$\gamma^+(x) \cap \cB(p) \neq \emptyset$.
Then
\begin{enumerate}[label={\bfseries(\roman*)}]
  \item \label{gammaP2}
    $\Gamma = \overline{\gamma^+(p)}$,
  \item \label{contractile} If furthermore $\Gamma \subset \cB(p)$, then
    $\Gamma$ is contractible. In particular, it is simply connected.
\end{enumerate}
\end{prop}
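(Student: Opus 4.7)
The plan is to prove (i) by two inclusions, then leverage it together with the strong positive invariance of $\Gamma$ to write down an explicit contracting homotopy for (ii).

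For (i), the inclusion $\Gamma\subset\overline{\gamma^+(p)}$ is immediate from the definition of $\Gamma$ as an intersection. For the converse I first aim to show that $p\in\overline{\gamma^+(x)}$ for every $x\in M$: when $x\in\cB(p)$ this is direct, since $\Phi^1_t(x)\in\gamma^+(x)$ and $\Phi^1_t(x)\to p$; when $x\notin\cB(p)$, the orbit-intersection hypothesis supplies some $y=\bphi^{\bi}_{\bu}(x)\in\cB(p)$, and extending $(\bi,\bu)$ by a long run of $\Phi^1$ keeps the trajectory inside $\gamma^+(x)$ while forcing it arbitrarily close to $p$. Once $p\in\overline{\gamma^+(x)}$, any $q=\bphi^{\bj}_{\bv}(p)\in\gamma^+(p)$ is the limit of $\bphi^{\bj}_{\bv}(p_n)$ for a sequence $p_n\to p$ with $p_n\in\gamma^+(x)$, and concatenation of adapted sequences shows $\bphi^{\bj}_{\bv}(p_n)\in\gamma^+(x)$. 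Hence $\gamma^+(p)\subset\overline{\gamma^+(x)}$, and intersecting over $x\in M$ yields $\overline{\gamma^+(p)}\subset\Gamma$.

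For (ii) I would first observe that $p=\Phi^1_1(p)\in\gamma^+(p)$, so $p\in\Gamma$ by (i). Proposition~\ref{bangbang} makes $\Gamma$ compact and strongly positively invariant under $\Psi$; since $t\mapsto\Phi^1_t(y)$ solves the differential inclusion~\eqref{di}, this forces $\Phi^1_t(\Gamma)\subset\Gamma$ for all $t\geq 0$. Compactness of $\Gamma$ together with $\Gamma\subset\cB(p)$ then lets me upgrade the uniform attraction on $\cU$ to uniform attraction on $\Gamma$: $\sup_{x\in\Gamma}\NRM{\Phi^1_t(x)-p}\to 0$ as $t\to\infty$. The contraction I would exhibit is
\[
H\colon[0,1]\times\Gamma\to\Gamma,\qquad H(s,x)=\begin{cases}\Phi^1_{s/(1-s)}(x)&\text{if }0\leq s<1,\\ p&\text{if }s=1,\end{cases}
\]
which takes values in $\Gamma$ by invariance, satisfies $H(0,\cdot)=\mathrm{id}_\Gamma$ and $H(1,\cdot)\equiv p$, is jointly continuous on $[0,1)\times\Gamma$ by smoothness of the flow, and is continuous at $s=1$ precisely because of the uniform convergence above. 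This exhibits a contraction of $\Gamma$ onto $p$, so $\Gamma$ is contractible and in particular simply connected.

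The main obstacle is the uniform convergence step: the attractor hypothesis only gives uniformity on a neighbourhood $\cU$ of $p$. To bridge the gap I would first carve out a positively invariant neighbourhood $B\subset\cU$ of $p$ (for instance, $B=\{y\in\cU:\Phi^1_t(y)\in\cU\text{ for all }t\geq 0\}$, which still contains a small ball around $p$), cover the compact set $\Gamma$ by finitely many $V_{x_j}$ with $\Phi^1_{t_{x_j}}(V_{x_j})\subset B$, and then use positive invariance of $B$ to conclude that $\Phi^1_t(\Gamma)\subset B$ for all $t\geq\max_j t_{x_j}$. Everything else is routine bookkeeping using the definitions and the earlier structural properties of $\Gamma$.
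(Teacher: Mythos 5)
Your proof is correct and takes essentially the paper's approach: for (ii) you exhibit the same contracting homotopy along $\Phi^1$ (the paper uses $h(t,x)=\Phi^1_{-\log(1-t)}(x)$, a reparametrization of your $\Phi^1_{s/(1-s)}(x)$), merely adding the uniform-attraction and invariance details that the paper dismisses as ``easily seen''. Part (i) is left to the reader in the paper, and your two-inclusion argument --- showing $p\in\overline{\gamma^+(x)}$ for every $x$ via the basin hypothesis and then transporting $\gamma^+(p)$ into $\overline{\gamma^+(x)}$ by continuity and concatenation --- is the intended one.
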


\begin{proof} The proof of~\ref{gammaP2} is left to the reader. To
prove~\ref{contractile},  let $h : [0,1] \times \Gamma \rightarrow\Gamma$
be defined by
\[
h(t,x) =
\begin{cases}
\Phi^1_{-\log(1-t)}(x)&\text{ if } t < 1,\\
p&\text{ if }t=1.
\end{cases}
\]
It is easily seen that $h$ is continuous. Hence the result.
\end{proof}

%%%%%
\subsubsection{The accessible set and recurrence properties}

In this section, we link the accessibility (which is a deterministic notion) to some recurrence 
properties for the embedded chain $\tilde Z$ and the continuous time process $Z$.    
\begin{prop}[Returns near $\Gamma$ --- discrete case]
  \label{Harris1}
Assume that $\Gamma \neq \emptyset.$  Let $p \in \Gamma$ and  $\cU$
be a neighborhood of $p.$ There exist $m \in \mathbb{N}$ and $\delta > 0$
such that for  all $i,j \in E$ and $x \in M$
\[
  \prb[x,i]{\tilde{Z}_m \in \cU \times \{j\}} \geq \delta.
\]
In particular,
\[
  \prb[x,i]{\exists l \in \dN,  \: \tilde{Z}_l \in \cU \times \{j\}} = 1.
\]
\end{prop}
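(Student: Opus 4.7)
The plan is to produce, for every initial condition $(x,i)$ and every target mode $j$, a single adapted discrete path of positive probability reaching $\cU\times\{j\}$, and then to promote the pointwise estimates to uniform ones via continuity, compactness of $M\times E$, and a ``phantom-jump'' padding trick that equalises the number of discrete steps. As preparation, I fix a smaller open neighborhood $\cU_1$ of $p$ with $\overline{\cU_1}\subset \cU$, choose $\tau>0$ such that $\Phi^{j}_{s}(\overline{\cU_1})\subset \cU$ for every $j\in E$ and every $s\in[0,\tau]$ (available by uniform continuity of the flows on compacts), and set $q_0=\min_{x\in M,\,j\in E} Q(x,j,j)>0$ (positive by continuity, strict positivity of the diagonal and compactness).

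For the pointwise construction, I would use $p\in\overline{\gamma^+(x)}$ together with characterisation~\eqref{adapte} to obtain an adapted sequence $(\bj,\bv)$ with $\bphi^{\bj}_{\bv}(x)\in\cU_1$. Its endpoints $j_0,j_n$ may not match the prescribed $i,j$, so I would use the irreducibility of $Q(x)$ (resp.\ of $Q(\bphi^{\bj}_{\bv}(x))$) to prepend, resp.\ append, short adapted transitions $i\to\dots\to j_0$ and $j_n\to\dots\to j$, with times so small that continuity of $Q$ keeps the relevant entries positive and the terminal position stays inside $\cU_1$. A tube-type estimate identical in spirit to Lemma~\ref{le:tube}, combined with continuity of $\bphi^{\bi}_{\bu}$ and of $(y,\bs)\mapsto p(y,\bi,\bs)$, then yields a neighborhood $V(x,i,j)$ of $x$, an integer $n(x,i,j)$ and a constant $\delta(x,i,j)>0$ with
\[
\prb[y,i]{\tilde Z_{n(x,i,j)}\in\cU_1\times\{j\}}\geq \delta(x,i,j)\qquad\text{for all }y\in V(x,i,j).
\]

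For uniformity, for each of the finitely many pairs $(i,j)\in E^2$ I extract a finite subcover of $M$ by such neighborhoods, so the maximum of the $n(x_k,i,j)$ and the minimum of the $\delta(x_k,i,j)$ are well-defined. To equalise the number of steps, I extend any path of length $n(x_k,i,j)$ to the common length by inserting phantom self-jumps $j\to j$: each contributes at least $q_0$ to the probability (independently of the current position), and requiring the total extra inter-arrival time to stay below $\tau$ is a positive probability event (independent of the mode chain) that keeps the $X$-coordinate in $\cU$ throughout. This produces uniform $m$ and $\delta$ and proves the first inequality. The second assertion is then immediate from the Markov property: the probability that $\tilde Z$ avoids $\cU\times\{j\}$ through time $Km$ is at most $(1-\delta)^K\to 0$.

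The main obstacle is the endpoint-matching in the pointwise construction: the path given by~\eqref{adapte} comes with free endpoints, whereas the statement demands prescribed initial mode $i$ and terminal mode $j$. Bridging this gap cleanly using the irreducibility of $Q$ together with continuity, while simultaneously preserving adaptedness and the position constraint $\bphi^{\bi}_{\bu}(x)\in\cU_1$, is the finicky part of the argument; everything else is a routine compactness/continuity plus Markov property wrap-up.
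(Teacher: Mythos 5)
Your proposal is correct and follows essentially the same route as the paper: the paper's proof packages exactly your ingredients — the characterization \eqref{adapte}, endpoint matching via irreducibility/aperiodicity of $Q$ with small jump times, continuity and compactness to get a finite cover, padding with ``false'' self-jumps to equalize the number of steps, then the tube computation of Lemma~\ref{le:tube} and the Markov property — into the deterministic covering Lemma~\ref{triv2} before adding the probabilistic estimate. The only cosmetic difference is that the paper inserts zero-time phantom jumps at the start of the sequences (so the union over path lengths becomes monotone and one compactness extraction yields a uniform $m$), whereas you append small-time self-jumps with total duration below your $\tau$; both devices are sound.
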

Note that, in the previous proposition, the same discrete time $m$ works 
for all $x\in M$. In the continuous time framework, one common time $t$ 
does not suffice in general; however one can prove
a similar statement if one allows a finite number of times:
\begin{prop}[Returns near $\Gamma$ --- continuous case]
\label{Harriscont1}
Assume that $\Gamma \neq \emptyset$.
 Let $p \in \Gamma$ and  $\cU$ be a neighborhood of $p.$ There exist
 $N\in \mathbb{N}$, 
 a finite open covering $\cO^1, \ldots \cO^N$ of $M$, 
  $N$ times $ t_1, \ldots, t_N > 0 $ and $\delta > 0$ such that, for
 all $i,j \in E$ and $x \in \cO^k$,
\[
  \prb[x,i]{ Z_{t_k} \in \cU \times \{j\} }\geq \delta.
\]
\end{prop}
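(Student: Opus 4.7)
The idea is to lift Proposition~\ref{Harris1} to continuous time by carefully controlling the Poisson interarrival times, and then to combine with a compactness argument on~$M$. First, shrink the target: pick an open $\cU_0\ni p$ with $\overline{\cU_0}\subset\cU$, and by compactness of $\overline{\cU_0}$ together with continuity of the flows pick $\tau>0$ such that $\Phi^l_s(\overline{\cU_0})\subset\cU$ for every $l\in E$ and every $s\in[0,\tau]$. Then for any $n\in\dN$, $j\in E$, and $t^*>0$, on the event
\[
\{\tilde Z_n\in\cU_0\times\{j\},\ T_n\in[t^*-\tau,t^*],\ U_{n+1}>\tau\}
\]
no Poisson jump occurs in $[T_n,t^*]$, hence $Y_{t^*}=j$ and $X_{t^*}=\Phi^j_{t^*-T_n}(\tilde X_n)\in\Phi^j_{[0,\tau]}(\cU_0)\subset\cU$; in particular $Z_{t^*}\in\cU\times\{j\}$.

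\emph{Local construction and cover.} For each triple $(x,i,j)\in M\times E\times E$, I would use~\eqref{adapte} together with the prefix/suffix argument from the proof of Lemma~\ref{le:tube} (relying on the irreducibility of $Q(x)$) to produce an adapted sequence $(\bi,\bu)\in\dT_{n,\mathrm{ad}(x)}$ with $i_0=i$, $i_n=j$, and $\bphi^\bi_\bu(x)\in\cU_0$. A small variant of the computation in the proof of Lemma~\ref{le:tube} --- imposing $U_l\in[u_l,u_l+\delta_1]$ with $n\delta_1\leq\tau$ instead of $|U_l-u_l|\leq\delta_1$ --- gives a positive lower bound on $\prb[x,i]{\tilde Z_n\in\cU_0\times\{j\},\ T_n\in[t_n,t_n+\tau]}$, where $t_n=\sum u_l$. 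Combining this with the independent factor $\prb{U_{n+1}>\tau}=e^{-\lambda\tau}$ and the previous paragraph yields $\prb[x,i]{Z_{t_n+\tau}\in\cU\times\{j\}}>0$. Lower semi-continuity of this probability in the starting point extends the bound to a neighborhood $\cO_{x,i,j}$ of $x$; intersecting over the finite set of pairs $(i,j)\in E^2$ gives an open set $\cO_x\ni x$, and compactness of $M$ produces a finite subcover $\cO_{x_1},\ldots,\cO_{x_N}$.

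\emph{Synchronising the times --- the main obstacle.} Each $\cO_{x_k}$ comes with $|E|^2$ a priori distinct candidate times $t_n(x_k,i,j)+\tau$, while the statement requires a single~$t_k$ per cover set. I would resolve this by designing the reference sequences so that, for each $k$, the times $t_n(x_k,i,j)$ all fall within a window of length at most~$\tau$: given a too-short reference path, one extends it by appending additional bang-bang segments that stay near~$p$ inside~$\cU_0$, exploiting the strong positive invariance of $\Gamma$ under~$\Psi$ and the recurrence $p\in\omega_\Psi(p)$ from Proposition~\ref{bangbang}. Once the $|E|^2$ times are synchronised within~$\tau$, choosing $t_k$ in the common intersection $\bigcap_{i,j}[t_n(x_k,i,j),t_n(x_k,i,j)+\tau]$ and applying the first paragraph finishes the proof. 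This synchronisation step is the hardest part of the argument.
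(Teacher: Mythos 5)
Your overall plan --- shrink the target, use a local construction plus compactness to build a finite cover, then synchronize the times --- parallels the paper's route, which goes through Lemma~\ref{le:tube} and Lemma~\ref{triv2} and then declares the proposition a ``direct consequence''. You have correctly isolated the one genuinely delicate point that this ``direct consequence'' glosses over: the same time $t_k$ must work for all $|E|^2$ pairs $(i,j)$ simultaneously on a given cover set, while the reference trajectories for different $(i,j)$ (coming from Lemma~\ref{triv2}, which produces $(i,j)$-dependent covers and sequences) a priori have different total durations. So far your argument is on the right track and the local-construction and covering steps are fine.

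The gap is in your proposed synchronization. You suggest padding a too-short reference path by ``appending additional bang-bang segments that stay near $p$ inside $\cU_0$'', invoking $p\in\omega_\Psi(p)$ and the strong positive invariance of $\Gamma$. This is not justified and is in general false: recurrence only guarantees that trajectories issuing from near $p$ return near $p$ at \emph{some} future times, but neither forces them to remain inside $\cU_0$ in the meantime (they may wander deep into $\Gamma$) nor lets you \emph{prescribe} the return time. If, for instance, the dynamics near $p$ is close to a rotation of period $T_*$, the achievable padding durations cluster around multiples of $T_*$, leaving gaps that cannot be filled; nor is there any reason for a short bang-bang loop of arbitrary prescribed length to exist. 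A robust way to synchronize is to avoid the problem at the source. Fix a single index $i_0\in E$ and apply the construction (or Lemma~\ref{triv2}) only for the pair $(i_0,i_0)$ and a smaller target $\cU_1$ with $\overline{\cU_1}\subset\cU_0$; this yields one cover $\{\cO^k\}$ and one reference sequence $(\bi^k,\bu^k)$ per $k$, with total duration $T^k_0$ depending only on $k$. Then, for an arbitrary pair $(i,j)$, \emph{prepend} a short adapted segment from $i$ to $i_0$ before following $(\bi^k,\bu^k)$, and \emph{append} a short adapted segment from $i_0$ to $j$, each of total duration $<\eta$; the irreducibility and aperiodicity of $Q(\cdot)$, together with compactness of $M$, make this possible with a uniformly bounded number of steps. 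This moves the endpoint by at most $O(C_{sp}\eta)$ (hence still inside $\cU_0$ for $\eta$ small) and changes the duration by at most $2\eta$, so all $|E|^2$ reference durations lie in $[T^k_0,T^k_0+2\eta]$. Taking $\eta<\tau/4$ and then $t_k=T^k_0+\tau/2$ gives a single time that works for every $i$, $j$, and every $x$ in a suitably shrunk $\cO^k$, and Lemma~\ref{le:tube} finishes the argument as in your first paragraph.
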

Since $\Gamma$ is positively invariant under each flow $\Phi^i$ we
deduce from Proposition~\ref{Harriscont1} the following result.
\begin{cor}
\label{Harriseasycor}
Assume $\Gamma$ has non empty interior. Then
\[
  \prb[x,i]{ \exists t_0 \geq 0,  \forall t \geq t_0,  Z_{t} \in \Gamma \times E} = 1.
\]
\end{cor}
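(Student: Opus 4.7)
The plan is to combine the uniform hitting estimate of Proposition~\ref{Harriscont1} with the strong positive invariance of $\Gamma$ under the set-valued dynamics $\Psi$ (Proposition~\ref{bangbang}\ref{compacity}), and then iterate via the Markov property. Since $\Gamma$ has non-empty interior, I first pick a point $p\in\mathrm{int}(\Gamma)$ and a neighborhood $\cU$ of $p$ with $\cU\subset\Gamma$. Applying Proposition~\ref{Harriscont1} to this $\cU$ produces an integer $N$, an open covering $\cO^1,\ldots,\cO^N$ of $M$, times $t_1,\ldots,t_N>0$, and $\delta>0$. Setting $T=\max_k t_k$ and fixing any $j_0\in E$, I obtain the uniform estimate
\[
\forall (x,i)\in M\times E,\quad \prb[x,i]{X_{t_k}\in\cU}\geq\prb[x,i]{Z_{t_k}\in\cU\times\{j_0\}}\geq\delta,
\]
where $k=k(x)$ is any index with $x\in\cO^k$.

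The crucial observation is that once the continuous component enters $\Gamma$, it remains there forever. Indeed, the paths of $X$ are almost surely solutions of the differential inclusion~\eqref{di} (this is the easy inclusion in Theorem~\ref{supportlaw}), so whenever $X_s\in\Gamma$ one has $X_{s+u}\in\Psi_u(X_s)\subset\Gamma$ by strong positive invariance. In particular, $\{X_{t_k}\in\cU\}\subset\{X_T\in\Gamma\}$, which upgrades the previous estimate to
\[
\prb[x,i]{X_T\in\Gamma}\geq\delta
\]
uniformly in $(x,i)$, and moreover the events $B_n=\{X_{nT}\notin\Gamma\}$ form a \emph{decreasing} sequence.

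Combining these two facts with the Markov property yields
\[
\prb[x,i]{B_{n+1}}=\esp[x,i]{\ind_{B_n}\,\prb[Z_{nT}]{X_T\notin\Gamma}}\leq (1-\delta)\prb[x,i]{B_n},
\]
hence $\prb[x,i]{B_n}\leq(1-\delta)^n\to 0$ and $\prb[x,i]{\bigcap_n B_n}=0$. On the complementary event of full probability there exists (random) $n_0$ with $X_{n_0 T}\in\Gamma$, and strong positive invariance gives $X_t\in\Gamma$, equivalently $Z_t\in\Gamma\times E$, for every $t\geq n_0 T$. I do not foresee a serious obstacle: the entire argument is a routine absorption argument, the only genuinely delicate point being that the non-empty interior hypothesis is precisely what lets me choose the target neighborhood $\cU$ inside $\Gamma$, thereby converting the ``visit'' statement of Proposition~\ref{Harriscont1} into an ``absorption'' statement via invariance.
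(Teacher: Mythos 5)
Your proof is correct and follows exactly the route the paper intends: it combines Proposition~\ref{Harriscont1} (applied to a neighborhood $\cU\subset\mathrm{int}(\Gamma)$, which is where the non-empty interior is used) with the positive invariance of $\Gamma$ along the trajectories of $X$, and then iterates via the Markov property to get geometric decay of $\prb[x,i]{X_{nT}\notin\Gamma}$. The paper leaves these details implicit ("Since $\Gamma$ is positively invariant under each flow $\Phi^i$ we deduce from Proposition~\ref{Harriscont1}..."), and your write-up is a faithful completion of that argument, with the only cosmetic difference being that you invoke strong positive invariance under the set-valued dynamics $\Psi$ rather than invariance under each individual flow $\Phi^i$.
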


Propositions~\ref{Harris1} and \ref{Harriscont1} are direct consequences of 
Lemma~\ref{le:tube} and of the following technical result. 
\begin{lem}\label{triv2}
Assume $\Gamma \neq \emptyset.$
Let $p \in \Gamma,$  $\cU$ be a neighborhood of $p$ and $i,j \in E$.
There exist $m \in \mathbb{N}^*, \varepsilon, \beta > 0,$ finite sequences
$(\bi^1,\bu^1) \ldots, (\bi^N,\bu^N) \in \TT^{ij}_m$ and an  open covering
$\cO^1, \ldots, \cO^N$ of $M$ (i.e. $M = \cO^1 \cup \ldots \cup \cO^N$ )
such that for all $x \in M$ and $\mathbf{\tau} \in \Rp^m$:
\[
x \in \cO^k \text{ and } \NRM{\tau - \bu^k} \leq \varepsilon
\quad\Longrightarrow\quad
 \bphi_{\tau}^{\bi^k}(x) \in \cU \text{ and } p(x,\bi^k,\tau) \geq \beta.
\]
Furthermore, $m, \varepsilon$ and $\beta$ can be chosen independent of $i,j \in E$.
\end{lem}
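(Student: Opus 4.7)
The plan uses three ingredients: the defining property of $\Gamma$, continuity of the composite flows and transition probabilities, and a padding argument to equalize trajectory lengths. Fix $i,j \in E$ for the moment. Given $y \in M$, the characterization~\eqref{adapte} supplies an adapted trajectory $(\tilde\bi^y, \tilde\bu^y)$ with $\bphi^{\tilde\bi^y}_{\tilde\bu^y}(y) \in \cU$; its first and last discrete components may differ from $i$ and $j$. I would then concatenate a short state path (of positive $Q$-weight, which exists because each $Q(x)$ is irreducible and aperiodic) going from $i$ to the starting index of $\tilde\bi^y$, and symmetrically one from the ending index to $j$, assigning arbitrarily small interarrival times. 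By continuity of the flows and of $Q$, the resulting element $(\bi^y, \bu^y) \in \TT^{i,j}_{m_y}$ still satisfies $\bphi^{\bi^y}_{\bu^y}(y) \in \cU$ and $p(y, \bi^y, \bu^y) > 0$.

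Next, the maps $(x, \tau) \mapsto \bphi^{\bi^y}_\tau(x)$ and $(x, \tau) \mapsto p(x, \bi^y, \tau)$ are jointly continuous, so there exist an open neighborhood $\cO^y$ of $y$ and constants $\varepsilon_y, \beta_y > 0$ such that $\bphi^{\bi^y}_\tau(x) \in \cU$ and $p(x, \bi^y, \tau) \geq \beta_y$ whenever $x \in \cO^y$ and $\NRM{\tau - \bu^y} \leq \varepsilon_y$. Compactness of $M$ then yields a finite subcover $\cO^{y_1}, \ldots, \cO^{y_N}$ and corresponding data $(\bi^{y_k}, \bu^{y_k})$, $\varepsilon_{y_k}$, $\beta_{y_k}$.

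It remains to reconcile the possibly different lengths $m_{y_k}$. I would set $m = \max_k m_{y_k}$ and pad each $(\bi^{y_k}, \bu^{y_k})$ to length $m$ by appending $m - m_{y_k}$ phantom transitions at state $j$, i.e.\ extending the discrete sequence by repetitions of the symbol $j$ with short prescribed interarrival times. This is admissible since $Q(x,j,j) > 0$ for every $x \in M$ with a lower bound that is uniform in $x$ by continuity and compactness; the additional composite flow is driven by $\Phi^j$ on an arbitrarily short time, so continuity preserves both $\bphi \in \cU$ and the lower bound on $p$, after possibly shrinking the tolerances. Taking $\varepsilon = \min_k \varepsilon_{y_k}$ and $\beta = \min_k \beta_{y_k}$ handles the fixed pair $(i,j)$, and finiteness of $E$ permits taking the maximum $m$ and the minima of $\varepsilon, \beta$ over all $(i,j) \in E^2$ to obtain constants uniform in $i, j$.

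The main obstacle I foresee is the padding step, which must preserve simultaneously the inclusion $\bphi \in \cU$ and a uniform positive lower bound on $p$. Both requirements are handled by exploiting that phantom jumps at $j$ carry $Q$-weight bounded away from zero (by continuity of $\lambda(\cdot, j, \cdot)$ and compactness of $M$) and perturb the trajectory only through $\Phi^j$ on arbitrarily short time intervals, so uniform continuity of the flow on $M$ rescues both conditions.
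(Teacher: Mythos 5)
Your overall strategy is sound and close in spirit to the paper's: both arguments rest on the characterization \eqref{adapte}, on joint continuity of $(x,\tau)\mapsto\bphi^{\bi}_{\tau}(x)$ and of $p(x,\bi,\tau)$, on compactness of $M$, and on a padding device (self-loops with weight $Q(\cdot,j,j)\geq c_Q>0$, resp.\ zero-time ``false jumps'' in the paper) to equalize lengths and make $m,\varepsilon,\beta$ uniform in $(i,j)$. The organisation differs (pointwise cover plus post-hoc padding, versus the paper's monotone union over lengths followed by three successive compactness extractions), and that is fine, except at one point.

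The genuine gap is in the padding step. After you have fixed the finite subcover $\cO^{y_1},\dots,\cO^{y_N}$, you only know that $\bphi^{\bi^{y_k}}_{\tau}(x)\in\cU$ for $x\in\cO^{y_k}$ and $\NRM{\tau-\bu^{y_k}}\leq\varepsilon_{y_k}$; this set of endpoints is in general \emph{not} at positive distance from $\cU^c$ (it may accumulate on $\partial\cU$ as $x$ ranges over $\cO^{y_k}$). Consequently, appending phantom jumps driven by $\Phi^{j}$ for a short time $s>0$ --- and, equally important, allowing the $\varepsilon$-perturbation of the padded coordinates demanded by the statement --- can push some of these endpoints out of $\cU$, no matter how small $s$ and $\varepsilon$ are. ``Shrinking the tolerances'' cannot repair this, because the problem is the absence of a margin at the exact times, not the size of $\varepsilon$; and shrinking the sets $\cO^{y_k}$ at this stage would destroy the covering. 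The fix is cheap but must be built in \emph{before} the compactness step: either aim from the start at a neighborhood $\cV$ of $p$ with $\overline{\cV}\subset\cU$ (this is exactly what the paper does, and the positive distance $d(\cV,\cU^c)$ together with the bounded speed $C_{sp}$ and Gronwall-type continuity in $\tau$ then absorbs both the padding and the $\varepsilon$-perturbation), or, equivalently, in your continuity step choose $\cO^{y}$ and $\varepsilon_{y}$ so that the endpoint lands in a ball $B(z_y,r_y)$ with $B(z_y,2r_y)\subset\cU$, and keep the total padded time plus tolerance below $r_y/C_{sp}$. With that modification (and the harmless replacement of $\beta$ by $\min_k\beta_{y_k}c_Q^{\,m-m_{y_k}}$), your proof goes through.
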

\begin{proof} 
Fix $i$ and $j$, and let $\cV$ be a neighborhood of $p$ with closure $\overline{\cV} \subset \cU$.
For all $\beta>0$, define the open sets
\[ 
  \cO(\bi,\bu,\beta) = \{x \in M : \: \bphi_{\bu}^{\bi}(x) \in \cV, p(x,\vtr{i}, \vtr{u}) > \beta \}.
\]
By \eqref{adapte}, one has 
\begin{equation}
  \label{eq:bigInclusion}
  M = \bigcup_n \PAR{\bigcup_{\beta>0}\bigcup_{(\bi,\bu) \in \TT^{ij}_{n}}\cO(\bi,\bu,\beta)}
\end{equation}
Now, since one can add ``false'' jumps ($i$ does not change and $u$ equals 0) 
to $(\vtr{i},\vtr{u})$ without changing $\bphi_{\vtr{u}}^{\vtr{i}}(x)$, 
\[
  \forall (\vtr{i},\vtr{u}) \in \TT^{ij}_n, \forall n'\geq n, \forall \beta, 
  \;
  \exists \beta'>0 , (\vtr{i'},\vtr{u'})\in \TT^{ij}_{n'}
  \quad \text{such that} \quad 
  \cO(\vtr{i},\vtr{u}, \beta) \subset \cO(\vtr{i'},\vtr{u'}, \beta')
\]
(just add $n'-n$ false jumps at the beginning and let $\beta' = \beta (\inf_M Q(x,i,i))^{n' - n}$). 
Therefore the union over $n$ in \eqref{eq:bigInclusion} is increasing: by compactness, there exists $m$ such that
\[
  M \subset
\bigcup_{\beta>0}\bigcup_{(\bi,\bu) \in \TT^{ij}_{m}}\cO(\bi,\bu,\beta).
\]
Note that by monotonicity, $m$ can be chosen uniformly over $i$ and $j$.
The union in $\beta$ increases as $\beta$ decreases, so by compactness again there
exists $\beta_0$ (independent of $i$, $j$) such that
\[
  M \subset \bigcup_{(\bi,\bu) \in \TT^{ij}_{m}}\cO(\bi,\bu,\beta_0).
\]
A third invocation of compactness shows that for some finite $N$, 
\[ M \subset \bigcup_{k=1}^N \cO_k,\]
where $\cO^k = \cO(\vtr{i}^k, \vtr{u}^k, \beta_0)$ for some
$(\vtr{i}^k, \vtr{u}^k)$ in $\TT^{ij}_m$. Since $\overline{\cV} \subset \cU$, 
the distance between $\cV$ and $\cU^c$ is positive (once more by compactness).
Choosing $\eps$ small enough therefore  guarantees
that  for $x\in\cO^k$ and $\NRM{\vtr{v} - \vtr{u}^k}<\varepsilon$, $\bphi_\vtr{i}^\vtr{v}(x) \in \cU$. 
This concludes the proof. 
\end{proof}

%%%%

\subsection{Support of invariant probabilities}
\label{sec:stable}
%\label{sec:invprob}

The following proposition relates $\Gamma$ to the support of the invariant measures of 
$\tZ$ or $Z$. We state and prove the result for $\tZ$ and
rely on Lemma~\ref{samesupport} for $Z$.

\begin{prop}[Accessible set and invariant measures]
  \label{thsupport}
  \ %force le retour à la ligne

\begin{enumerate}[label={\bfseries(\roman*)}]
  \item \label{suppI}
If $\Gamma \neq \emptyset$ then
 $\Gamma \times E\subset \supp(\mu)$ for all
 $\mu \in  \mathcal{\tP}_{inv}$ and
 there exists $\mu \in \mathcal{\tP}_{inv}$ 
 such that $\supp(\mu) = \Gamma \times E.$
\item \label{suppII}
If $\Gamma$ has non empty interior, then $\Gamma \times E = \supp(\mu)$
for all $\mu \in \mathcal{\tP}_{inv}$.
\item \label{suppIII}
  Suppose that $\tZ$ is stable with invariant probability $\pi$,
  then  $\supp(\pi) = \Gamma \times E$.
\end{enumerate}
\end{prop}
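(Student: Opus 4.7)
The overall plan is to treat the three parts in a modular way, drawing on the Harris-style lower bounds from Propositions~\ref{Harris1} and~\ref{Harriscont1}, on the strong positive invariance of $\Gamma$ under $\Psi$ (Proposition~\ref{bangbang}\ref{compacity}), and on the standard fact (recalled before Proposition~\ref{homeo}) that weak cluster points of $\tilde\Pi_n$ on the compact space $M\times E$ belong to $\tilde{\mathcal{P}}_{inv}$. I will first prove~\ref{suppI} as two opposite inclusions, then deduce the converse inclusion in~\ref{suppII} from Corollary~\ref{Harriseasycor}, and finally read off~\ref{suppIII} from a weak-limit argument combined with~\ref{suppI}.

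To get $\Gamma\times E \subset \supp(\mu)$ in~\ref{suppI}, I fix $p\in\Gamma$, $j\in E$ and an open neighborhood $\cU$ of $p$. Proposition~\ref{Harris1} yields $m\in\dN$ and $\delta>0$ such that
\[
\prb[x,i]{\tZ_m\in \cU\times\{j\}} \geq \delta \quad \text{for every } (x,i) \in M\times E,
\]
so integrating this uniform bound against $\mu$ and using invariance gives $\mu(\cU\times\{j\})\geq \delta >0$, hence $(p,j)\in\supp(\mu)$. For the existence of a $\mu$ realizing equality, I would start the chain from some $(p,i_0)\in\Gamma\times E$: since $\Gamma$ is strongly positively invariant under $\Psi$ and every path of $X$ is a solution of the differential inclusion~\eqref{di} (Theorem~\ref{supportlaw}), the chain remains in the closed set $\Gamma\times E$, so every atom of $\tilde\Pi_n$ lies there. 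Any weak cluster point $\mu$ of $\tilde\Pi_n$ is then invariant and supported in $\Gamma\times E$, while the first inclusion applied to this cluster point forces equality.

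For~\ref{suppII}, only the reverse inclusion is new. Corollary~\ref{Harriseasycor} gives $Z_t\in\Gamma\times E$ for all large $t$ almost surely, so (since $\tZ_n=Z_{T_n}$ by the construction in Section~\ref{sec:construct} and $T_n\to\infty$ a.s.) one has $\tZ_n\in\Gamma\times E$ eventually. Taking $\tZ_0\sim\mu$ keeps $\tZ_n$ distributed as $\mu$, and a Fatou argument applied to the indicators $\ind_{\tZ_n\in\Gamma\times E}$ produces $\mu(\Gamma\times E)=1$; closedness of $\Gamma$ then finishes this part. For~\ref{suppIII}, stability reduces $\tilde{\mathcal{P}}_{inv}$ to $\{\pi\}$, so $\tilde\Pi_n\to\pi$ weakly almost surely whatever the starting point $(x,i_0)$. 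The atoms of $\tilde\Pi_n$ lie in $\overline{\gamma^+(x)}\times E$, and the portmanteau theorem gives $\supp(\pi)\subset \overline{\gamma^+(x)}\times E$; intersecting over $x\in M$ yields $\supp(\pi)\subset \Gamma\times E$, whence $\Gamma\neq\emptyset$ and~\ref{suppI} closes the argument. The only delicate point is the transfer from $Z_t$ to $\tZ_n$ in~\ref{suppII}, but the explicit identity $\tZ_n=Z_{T_n}$ makes this routine; everything else is bookkeeping on top of the Harris-type results already in hand.
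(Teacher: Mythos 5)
Your proposal is correct and follows essentially the same route as the paper: part~\ref{suppI} via the uniform lower bound from Proposition~\ref{Harris1} integrated against an invariant $\mu$, plus a Krylov--Bogolyubov cluster point of empirical measures started inside the strongly positively invariant set $\Gamma\times E$; part~\ref{suppII} from Corollary~\ref{Harriseasycor}; and part~\ref{suppIII} from the almost-sure convergence $\tilde\Pi_n\to\pi$ under stability. The only cosmetic difference is in~\ref{suppIII}, where you package the argument via the portmanteau theorem applied to the closed set $\overline{\gamma^+(x)}\times E$ instead of the paper's continuous test function $\tilde f$ bump localised near a point of $\supp(\pi)$; both exploit the same facts (a.s.\ convergence of $\tilde\Pi_n$ and the inclusion of its atoms in $\gamma^+(x)\times E$) and are equivalent.
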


\begin{proof}
\ref{suppI} follows from Proposition~\ref{Harris1}. Also,  since $\Gamma$ is
strongly positively invariant, there are invariant measures supported by
$\Gamma \times E$.
  \ref{suppII} follows from \ref{suppI} and Corollary~\ref{Harriseasycor}.
  To prove~\ref{suppIII},  let $(p,i) \in \supp(\pi)$. Let $\cU, \cV$ be
  open neighborhoods of $p$ with $\overline{\cU} \subset \cV$ compact. Let
  $0 \leq f \leq 1$ be a continuous function on $M$ which is $1$ on $\cU$ and $0$
  outside $\cV$ and let $\tilde{f}(x,j) = f(x)\delta_{j,i}$. Suppose $Z_0 = (x,j)$.
Then, with probability one,
\begin{align*}
  \liminf_{n \rightarrow \infty}
  \frac{1}{n} \sharp\BRA{1 \leq k \leq n \: : \tZ_k \in \cV \times \{i\} }   
 \geq  &\lim_{n \rightarrow \infty} \frac{1}{n} \sum_{k = 1}^n \tilde f(\tZ_k) %= \int f \pi \\
 \geq  \pi(\cU \times \{i\}) >0.
 \end{align*}
 Hence $(\tZ_n)$ visits infinitely often $\cU \times \{i\}.$ In particular,
 $p \in \gamma^+(x).$ This proves that $\supp(\pi) \subset \Gamma \times E$.
 The converse statement follows from~\ref{suppI}.
\end{proof}
\begin{rem}
The example given in Section~\ref{sec:ovidiu} shows that
the inclusion $\Gamma \times E \subset \supp(\mu)$ may be strict when
$\Gamma$ has empty interior. On the other hand, the condition that
$\Gamma$ has non empty interior is not  sufficient to ensure uniqueness
of the invariant probability since there exist  smooth minimal flows
that are not uniquely ergodic. An example of such a flow can be constructed
on a $3$-manifold by taking the suspension of an analytic minimal non
uniquely ergodic diffeomorphism of the torus constructed by Furstenberg
in~\cite{Furst} (see also~\cite{Mane}). As shown in \cite{bakhtin&hurt} (see also
Section~\ref{sec:reg-and-erg}) a sufficient condition to ensure uniqueness
of the invariant probability is that the vector fields verify a Hörmander
bracket property at some point in $\Gamma$.
\end{rem}

\section{Absolute continuity and ergodicity}
\label{sec:reg-and-erg}

\subsection{Absolute continuity of the law of the processes}

%%%%%%%%%%%%%%%%%%%%%%%%%%%%%%%%%

Let $x_0$ be a point in $M$. The image of $u\mapsto \Phi_u(x_0)$ is a curve; one 
might expect that, if $i\neq j$, the image of $(s,t)\mapsto \Phi_t^{j}(\Phi_s^i(x_0))$ 
should be a surface. Going on composing the flow functions in this way, one might 
fill some neighbourhood of a point in  $\dR^d$.

Recall that, if $\vtr{i}$ is a sequence of indices
$\vtr{i} = (i_0, \ldots i_m)$ and $\vtr{u}$ is a sequence of times $\vtr{u} =
(u_1, \ldots u_m)$, $\PhiIU:M\to M$ is the composite map
defined by \eqref{defG}.

Suppose that for some $x_0$ the map $\vtr{v} \mapsto \PhiIV(x_0)$ is a submersion 
at  $\vtr{u}$. Then the image of the Lebesgue measure on a neighbourhood of $\vtr{u}$ 
is a measure on a neighbourhood of $\PhiIU(x_0)$ equivalent to the Lebesgue measure. 
If the jump rates $\lambda_i$ are constant functions, the probability 
$ \prb[x_0,i_0]{\tilde Z_m \in \cdot\times\BRA{i_m}}$ is just the image by 
$\vtr{u}\in\dR^m \mapsto \PhiIU(x_0)\in \dR^d$ 
of the product of exponential laws on $\dR^m$.  
We get that there exist $\cU_0$ 
a neighborhood of $x_0$, $\cV_0$ a neighborhood of $\PhiIU(x_0)$, and 
a constant $c>0$ such that:
\[
     \forall x\in \cU_0, \quad \prb[x,i_0]{\tilde Z_m\in \cdot\times\BRA{i_m}}
     \geq c \leb{d}\PAR{ \cdot \cap \cV_0}.
\]

Let us fix a $t>0$ and consider now the function 
$\vtr{v} \mapsto \Phi^{i_m}_{t-v_1-\ldots- v_k}(\PhiIV(x_0))$ defined on 
$v_1+\ldots +v_m<t$. For the same reason, if this function is a submersion at $\bu$, 
then the law of $X_t$ has an absolutely continuous part with respect to 
$\lambda_{\dR^d}$.  

The two following results state a stronger result (with a local uniformity with respect to 
initial and final positions) both for the embedded chain and the continuous time process. 
\begin{thm}[Absolute continuity --- discrete case]
  \label{thm=globalRegJumps}
  Let $x_0$ and $y$ be two points in $M$ and a sequence $(\vtr{i},\vtr{u})$ such that 
  $\PhiIU(x_0)=y$. If $\vtr{v} \mapsto \bphi^{\vtr{i}}_{\vtr{v}}(x_0)$ is a submersion at $\vtr{u}$, 
  then there exist $\mathcal{U}_0$ a neighborhood of $x_0$, $\mathcal{V}$ a 
  neighborhood of $y$, an integer $m$ and a constant $c>0$ such that 
   \begin{equation}
     \forall x\in \cU_0,\ \forall i,j\in E, \quad \prb[x,i]{\tilde Z_{m} \in \cdot\times \BRA{j}}
     \geq c \leb{d}\PAR{ \cdot \cap \cV}.
     \label{eq=regAtJumpTimes}
   \end{equation}
 \end{thm}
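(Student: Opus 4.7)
The plan is to reduce the estimate to a local change of variables near $(x_0,\vtr{u})$, then propagate it to nearby initial points $x$ by smoothness of the composite flows and to arbitrary initial and terminal discrete states by the irreducibility of $Q$. Write $n_0$ for the length of $\vtr{u}$. The submersion hypothesis means that the Jacobian of $\vtr{v}\mapsto\bphi^{\vtr{i}}_{\vtr{v}}(x_0)$ at $\vtr{u}$ has rank $d$, so it contains a nonsingular $d\times d$ minor with column indices $k_1<\dots<k_d$. Freezing the other $n_0-d$ components of $\vtr{v}$ at the values $u_\ell$ defines, for each $x$, a smooth map $\psi_x$ of $d$ variables into $\dR^d$. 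By the inverse function theorem, $\psi_{x_0}$ is a diffeomorphism from an open box around $(u_{k_1},\dots,u_{k_d})$ onto a neighborhood of $y=\bphi^{\vtr{i}}_{\vtr{u}}(x_0)$. Smoothness of composite flows in both time and initial condition then yields a neighborhood $\cU_0$ of $x_0$, an open box $\cB\subset\dR_+^{n_0}$ around $\vtr{u}$, a neighborhood $\cV$ of $y$, and a constant $\kappa>0$ such that, for every $x\in\cU_0$, the restricted map $\psi_x$ is a diffeomorphism onto an open set containing $\cV$, with Jacobian determinant bounded below by $\kappa$.

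For any Borel set $A\subset M$, conditioning on the sequence of discrete states of the chain $\tilde Z$ gives
\[
\prb[x,i_0]{\tilde Z_{n_0}\in A\times\{i_{n_0}\}}
=\esp{p(x,\vtr{i},\vtr{U})\,\ind_{A}\!\PAR{\bphi^{\vtr{i}}_{\vtr{U}}(x)}},
\]
where $\vtr{U}=(U_1,\dots,U_{n_0})$ has i.i.d.\ components, each exponential with rate $\lambda$. Continuity of $p$ and compactness of $\overline{\cU_0}\times\overline{\cB}$ provide $p(x,\vtr{i},\vtr{v})\geq\beta>0$ on that set. Restricting the expectation to $\vtr{U}\in\cB$, integrating out the $n_0-d$ frozen coordinates (whose exponential densities are uniformly bounded below on $\cB$), and applying the change of variables induced by $\psi_x$ yield
\[
\prb[x,i_0]{\tilde Z_{n_0}\in\cdot\times\{i_{n_0}\}}\geq c_1\,\leb{d}(\cdot\cap\cV)
\]
uniformly in $x\in\cU_0$, for some $c_1>0$.

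To upgrade this to arbitrary initial and terminal discrete states $(i,j)\in E^2$, I would invoke irreducibility. Continuity of $Q(x)$ and compactness of $M$ provide, for any ordered pair in $E^2$, a deterministic path in $E$ of length at most $r:=|E|-1$ along strictly positive $Q$-transitions, with a common lower bound $q_0>0$ on the associated product of transition probabilities. Prepending such a path from $i$ to $i_0$ with waiting times in a small interval $[0,\delta]$ (so the corresponding flow displacements are negligible and the post-prepended point still lies in a slightly shrunken $\cU_0$), and appending one from $i_{n_0}$ to $j$ (whose terminal flow displacements shrink $\cV$ only mildly), then padding each path to a common length $m$ via phantom jumps (allowed because $Q(x,i,i)>0$ with a uniform lower bound), produces the announced bound~\eqref{eq=regAtJumpTimes}.

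The main obstacle is the combined uniformity: harnessing a submersion property that holds only at a single $(x_0,\vtr{u})$ to yield a Lebesgue lower bound that is uniform over a neighborhood of $x_0$ and over every endpoint pair $(i,j)\in E^2$, with a single integer $m$ and a single constant $c>0$. This requires a careful interplay between smooth dependence of composite flows on their data, continuity of $Q$ with irreducibility, and the padding trick via phantom jumps.
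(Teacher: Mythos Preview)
Your strategy mirrors the paper's: first obtain a Lebesgue lower bound for the specific path $(\vtr{i},\vtr{u})$ via a parametrized change of variables (the paper's Proposition~\ref{prop:weakGlobal}, built on Lemma~\ref{lem:rankToRegularity}), then gain uniformity over the initial and terminal discrete states by prepending and appending short transitions (the paper's Lemmas~\ref{lem:uniformiteInitiale} and~\ref{lem:uniformiteFinale}). The only structural difference is that the paper packages the ``append'' step as a statement about propagation of absolutely continuous measures under $\tP$, which rests on a Jacobian estimate for the flows (Lemma~\ref{lem:propagation}); your phrase ``shrink $\cV$ only mildly'' is this same idea stated informally.

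There is, however, one genuine gap. You assert $p(x,\vtr{i},\vtr{v})\geq\beta>0$ on $\overline{\cU_0}\times\overline{\cB}$ by continuity and compactness, but this requires $p(x_0,\vtr{i},\vtr{u})>0$, i.e., that $(\vtr{i},\vtr{u})$ be adapted to $x_0$. The theorem's hypothesis does not assume this, and if some transition $Q(x_k,i_{k-1},i_k)$ vanishes then $p\equiv 0$ nearby. The paper handles this explicitly: given a non-adapted $(\vtr{i},\vtr{u})$, irreducibility of $Q(x)$ lets one insert instantaneous transitions to obtain an adapted $(\vtr{i}',\vtr{u}')$ describing the same trajectory; the new family of pulled-back tangent vectors contains the old one, so the submersion property persists, and a further small perturbation (using lower semicontinuity of the rank) makes all components of $\vtr{u}'$ strictly positive. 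A minor additional slip: your displayed identity for $\prb[x,i_0]{\tilde Z_{n_0}\in A\times\{i_{n_0}\}}$ should be an inequality $\geq$, since the right-hand side is only the contribution of the single discrete path $\vtr{i}$; this does not affect the argument.
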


 \begin{thm}[Absolute continuity --- continuous case]
   \label{thm=globalRegFixed}
     Let $x_0$ and $y$ be two points in $M$ and a sequence $(\vtr{i},\vtr{u})\in \dT_m$ 
     such that $\PhiIU(x_0)=y$. If 
  $\vtr{v} \mapsto \bphi^{i_m}_{s-(v_1+\cdots+v_m)}\bphi^{\vtr{i}}_{\vtr{v}}(x_0)$ 
  is a submersion at $\vtr{u}$ for some $s>u_1+\cdots+u_m$, 
  then for all $t_0>u_1+\cdots+u_m$, there exist
  $\mathcal{U}_0$ a neighborhood of $x_0$, $\mathcal{V}$ a neighborhood of $y$
  and two constants $c,\varepsilon>0$ such that 
   \begin{equation}
     \forall x\in \cU_0,\ \forall i,j\in E,\ \forall t\in[t_0,t_0+\varepsilon],
     \quad \prb[x,i]{Z_t \in \cdot\times \BRA{j}}
     \geq c \leb{d}\PAR{ \cdot \cap \cV}.
    \label{eq=regAtFixedTimes}
   \end{equation}
\end{thm}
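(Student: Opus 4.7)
The strategy parallels that of Theorem~\ref{thm=globalRegJumps}, except that the fixed observation time~$t$ introduces an extra smooth variable corresponding to the elapsed flow after the $m$-th Poisson jump. Define
\[
   F_{t,x}(\vtr{v}) = \Phi^{i_m}_{t-(v_1+\cdots+v_m)}\bigl(\bphi^{\vtr{i}}_{\vtr{v}}(x)\bigr), \qquad \vtr{v}\in\dR_+^m,
\]
so that on the event $\{N_t=m,\ \tilde{Y}_0=i_0,\ldots,\tilde{Y}_m=i_m\}$ with interarrival times $\vtr{U}=(U_1,\ldots,U_m)$ one has $X_t=F_{t,x}(\vtr{U})$. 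The first step is to upgrade the submersion hypothesis from time~$s$ to time~$t_0$: the flow identity $F_{t,x}=\Phi^{i_m}_{t-s}\circ F_{s,x}$ shows that composing with the local diffeomorphism $\Phi^{i_m}_{t-s}$ preserves the rank of the $\vtr{v}$-Jacobian, so $F_{t_0,x_0}$ is a submersion at~$\vtr{u}$. By joint smoothness of $F_{t,x}(\vtr{v})$ in all its arguments the rank-$d$ condition then holds on an open neighborhood of $(t_0,x_0,\vtr{u})$; equivalently, $F_{t,x}$ is a submersion on a fixed compact neighborhood~$W$ of~$\vtr{u}$ for every $(x,t)$ close to $(x_0,t_0)$.

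Next I would lower-bound the probability by restricting to the above event. Using the joint density $\lambda^m e^{-\lambda t}$ of the jump times on $\{N_t=m\}$ together with the Markov structure of~$\tilde{Y}$,
\[
   \prb[x,i_0]{Z_t\in B\times\{i_m\}}
   \geq \lambda^m e^{-\lambda t}\int_W \mathbf{1}_B\bigl(F_{t,x}(\vtr{v})\bigr)\, p(x,\vtr{i},\vtr{v})\,d\vtr{v},
\]
where~$W$ is chosen so that $v_1+\cdots+v_m<t$ throughout (possible because $t_0>u_1+\cdots+u_m$). Continuity of~$Q$ and compactness supply a uniform positive lower bound on $p(x,\vtr{i},\vtr{v})$ over $(x,\vtr{v})\in\cU_0\times W$, so the right-hand side dominates $c_1\int_W \mathbf{1}_B\circ F_{t,x}\,d\vtr{v}$ uniformly in~$(x,t)$.

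The heart of the argument is a coarea-type estimate for this integral. Because $F_{t,x}:W\to\dR^d$ is a submersion, the local form theorem provides smooth charts straightening it to a coordinate projection; applied with parameters, it yields an open set~$\cV$ around the image point $F_{t_0,x_0}(\vtr{u})$ and a constant $c_2>0$ such that
\[
   \int_W \mathbf{1}_B\bigl(F_{t,x}(\vtr{v})\bigr)\,d\vtr{v}\geq c_2\,\leb{d}(B\cap\cV)
\]
for all $(x,t)$ in a neighborhood of $(x_0,t_0)$. Finally, to drop the constraint $i_0=i$, $i_m=j$, I would prepend to $(\vtr{i},\vtr{u})$ a short sequence of jumps taking the chain from~$i$ to~$i_0$ and append one from~$i_m$ to~$j$; by irreducibility of $Q(x)$ and compactness both are feasible with weight bounded below by a positive constant, and the augmented composite map retains rank~$d$ since the original~$m$ variables already provide $d$ independent tangent directions.

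The main obstacle I expect is the uniformity in~$(t,x)$ of the coarea lower bound: the local submersion theorem naturally yields constants depending on the base point, so securing a single~$\cV$ and a single~$c_2$ valid for all parameters requires a parametric implicit function theorem combined with compactness of~$W$. A subsidiary technicality concerns the identification of~$\cV$: the submersion naturally produces a neighborhood of $\Phi^{i_m}_{t_0-(u_1+\cdots+u_m)}(y)$, which must be reconciled with the ``neighborhood of~$y$'' phrasing of the statement, either by selecting the base sequence so that its composite flow lands at~$y$ exactly at time~$t_0$, or by tracking how~$\cV$ shifts smoothly with~$t_0$.
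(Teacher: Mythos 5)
Your proposal is correct, and its core coincides with the paper's: restrict to the event that exactly $m$ Poisson points fall before $t$, with interarrival times in a small box $W$ around $\vtr{u}$ and discrete chain equal to $\vtr{i}$; bound the discrete weight $p(x,\vtr{i},\cdot)$ below by continuity and compactness; and apply a parametrized local inversion (the paper's Lemmas~\ref{lem:local_inversion} and~\ref{lem:rankToRegularity}) to $\vtr{v}\mapsto \Phi^{i_m}_{t-(v_1+\cdots+v_m)}\bphi^{\vtr{i}}_{\vtr{v}}(x)$ to extract a Lebesgue minorant uniformly in the starting point. Where you genuinely differ is in how the various uniformities are obtained. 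The paper first proves the bound only for $i=i_0$, $j=i_m$ and the single time $t_0$ (the $s$-versus-$t_0$ issue disappears there because the submersion hypothesis is re-expressed as the rank of the difference family~\eqref{eq:defCIU} at $x_0$, which does not involve $s$), and then gains the initial index and the window $[t_0,t_0+\varepsilon]$ probabilistically, via a variable-length waiting period in a shrunk neighborhood plus the Markov property (Lemma~\ref{lem:uniformiteInitiale}), and the final index by propagating the absolutely continuous minorant with the flow and extra jumps (Lemmas~\ref{lem:propagation} and~\ref{lem:uniformiteFinale}). You instead pass from $s$ to $t_0$ by composing with the diffeomorphism $\Phi^{i_m}_{t_0-s}$, fold $t$ into the parameter of the straightening (legitimate: Lemma~\ref{lem:local_inversion} allows the parameter to live in $\dR^{d+1}$, and its product structure $I_1\times I_2\times J$ yields one fixed target box $\cV$ and one constant for all $(x,t)$ near $(x_0,t_0)$), and obtain the index uniformity by augmenting the deterministic jump sequence. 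Your route buys a single application of the straightening valid on the whole time window; the paper's route avoids re-verifying rank conditions for modified sequences at the cost of two additional probabilistic lemmas.

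Two points you should make explicit. First, your uniform lower bound on $p(x,\vtr{i},\vtr{v})$ over $\cU_0\times W$ presupposes that $(\vtr{i},\vtr{u})$ is adapted to $x_0$, which the hypothesis does not grant; as in the paper's proof of Theorem~\ref{thm=globalRegJumps}, one must first replace the sequence, using irreducibility, by an adapted one with small inserted times, the rank being preserved by lower semicontinuity --- the same device as your prepend/append step, so no obstruction, but it must be said. Second, appending a final excursion along $F^j$ perturbs the original $m$ Jacobian columns: in the limit of a vanishing $F^j$-stretch each of them is shifted by the same vector, the difference of $F^{i_m}$ and $F^j$ at the endpoint, so the claim that ``the original $m$ variables already provide $d$ independent directions'' is slightly too quick. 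The rank does survive, because the derivative in the new interarrival variable contributes exactly that difference vector as an extra column, so the augmented span contains the original one; a one-line computation (or, alternatively, the paper's Lemma~\ref{lem:uniformiteFinale}) closes this point.
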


The proofs of these results are postponed to Section~\ref{absolute}. 

Unfortunately, the hypotheses of these two theorems  are not easy to check, since one needs
to ``solve'' the flows. However, they translate to two very nice local conditions.
To write down these conditions, we need a bit of additional notation. Let
$\mathcal{F}_0$ the collection of vector fields $\BRA{F^i \: : i \in E}$. Let
$\mathcal{F}_k = \mathcal{F}_{k-1} \cup\{ [F^i,V], V\in \mathcal{F}_{k-1}\}$ (where 
$[F,G]$ stands for the Lie bracket of two vector fields $F$ and $G$)
and $\mathcal{F}_k(x)$ the vector space (included in $\dR^d$) spanned
by $\{V(x), V\in\mathcal{F}_k\}$.

Similarly, starting from $\mathcal{G}_0 = \{F^i - F^j, i\neq j\}$, we
define $\mathcal{G}_k$ by taking Lie brackets with the vector fields $\BRA{F^i \: : i \in E}$, 
and $\mathcal{G}_k(x)$ the corresponding subspace of $\dR^d$.

\begin{defi}
  \label{defi:bracketCondition}
  We say that the \emph{weak bracket condition} (resp. \emph{strong bracket condition}) 
  is satisfied at $x\in M$ if there exists $k$ such that $\mathcal{F}_k(x) = \dR^d$
  (resp. $\mathcal{G}_k(x) = \dR^d$). 
\end{defi}
These two conditions are called A (for the stronger) and B (for the weaker)
in \cite{bakhtin&hurt}.
Since $\mathcal{G}_k(x)$ is a subspace of $\mathcal{F}_k(x)$, the
strong condition implies the weak one. The converse is false, a
counter-example is given below in Section~\ref{sec:torus}.

\begin{thm}
  \label{th:hormander}
  If the weak (resp.\ strong) bracket condition holds at $x_0\in M$, then the conclusion of 
  Theorem~\ref{thm=globalRegJumps} (resp. Theorem~\ref{thm=globalRegFixed}) holds.
\end{thm}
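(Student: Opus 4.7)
The plan is to reduce Theorem~\ref{th:hormander} to Theorems~\ref{thm=globalRegJumps} and~\ref{thm=globalRegFixed}: from the weak (resp.\ strong) bracket condition at $x_0$ one must produce a sequence $(\vtr{i},\vtr{u})\in\TT_n$ (and a time $s$ in the second case) for which the corresponding composite-flow map is a submersion at $\vtr{u}$. The earlier theorems then yield the claimed absolute-continuity bounds.

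First, I would compute the differential at $\vtr{v}=\vtr{u}$ of $\vtr{v}\mapsto \bphi^{\vtr{i}}_{\vtr{v}}(x_0)$. A standard flow calculation shows its image is the span of the vectors
\[
w_k=(\Phi^{i_{n-1}}_{u_n})_{*}\circ\cdots\circ(\Phi^{i_k}_{u_{k+1}})_{*}\,F^{i_{k-1}}(y_{k-1}),\qquad k=1,\ldots,n,
\]
where $y_{k-1}=\bphi^{(i_0,\ldots,i_{k-2})}_{(u_1,\ldots,u_{k-1})}(x_0)$. For the fixed-total-time map of Theorem~\ref{thm=globalRegFixed}, the same computation, combined with the identity $(\Phi^{i_m}_t)_{*}F^{i_m}=F^{i_m}$, produces the pushforwards of the \emph{differences} $F^{i_{k-1}}-F^{i_m}$ at the intermediate points; this is precisely what motivates the choice $\mathcal{G}_0=\{F^i-F^j\}$.

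Next, I would prove by induction on the bracket depth $k$ that every vector $V(x_0)$ with $V\in\mathcal{F}_k$ (resp.~$V\in\mathcal{G}_k$) can be realised as a direction in the span of the $w_\ell$'s for some composite-flow sequence. The engine of the induction is the commutator identity
\[
\Phi^j_{-\eps}\Phi^i_{-\eps}\Phi^j_{\eps}\Phi^i_{\eps}(x)=x+\eps^{2}[F^i,F^j](x)+o(\eps^{2}),
\]
which, after a suitable rescaling of the small times, turns a direction $W(x_0)$ already realised by a composite flow into $[F^i,W](x_0)$ by prepending a short zig-zag. Given the hypothesis $\mathcal{F}_k(x_0)=\dR^d$ (resp.~$\mathcal{G}_k(x_0)=\dR^d$), selecting $d$ realisations spanning $\dR^d$ and concatenating the associated sequences produces a $(\vtr{i},\vtr{u})$ whose differential is surjective, i.e.\ a submersion, as required.

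The main obstacle will be the bookkeeping in the induction: one has to verify that the Lie-bracket directions introduced by the short perturbations survive after being pushed forward by the surrounding flows, and that independent realisations remain independent when concatenated rather than collapsing into a strict subspace of $\mathcal{F}_k(x_0)$. A cleaner route is to invoke the Sussmann orbit theorem, which asserts that the orbit of $x_0$ under compositions of the flows $\Phi^i$ is an immersed submanifold of $M$ whose tangent space at every point contains the Lie algebra generated by $\{F^i\}$; when this algebra has full rank at $x_0$ the orbit is locally open, and a rank argument yields the submersive sequence. For the strong condition, an analogous "constrained orbit" version (fixed total time) is needed, where the role of the full Lie algebra is played by the one generated from the differences $F^i-F^j$.
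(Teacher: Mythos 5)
Your reduction to Theorems~\ref{thm=globalRegJumps} and~\ref{thm=globalRegFixed} and the description of the differential of the composite-flow map are on the right track (your pushforward vectors $w_k$ are equivalent, after transport to $x_0$, to the pullback family $\Cdiscret(\vtr{i},\vtr{u})$ the paper uses). However, the engine you propose for the induction does not work in this setting.

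The commutator identity
\[
\Phi^j_{-\eps}\Phi^i_{-\eps}\Phi^j_{\eps}\Phi^i_{\eps}(x)=x+\eps^{2}[F^i,F^j](x)+o(\eps^{2})
\]
requires flowing \emph{backward} in time, whereas in Definition~\ref{def:adapted} the time vector $\vtr{u}$ must lie in $\dR_+^n$: the PDMP only ever follows each $F^i$ for a nonnegative duration. There is no positive-time analogue of this zig-zag that produces $[F^i,F^j]$ as a displacement direction of the composite flow. The paper circumvents this obstruction by a different mechanism: it does \emph{not} realise a bracket as a path direction $\partial_t \bphi$ at all, but rather as an asymptotic linear combination (with coefficients $\lambda_{ij}(t)$ allowed to blow up as $t\to0$) of the \emph{partial derivatives} $\partial_{v_k}\bphi^{\vtr{i}}_{\vtr{v}}(x_0)$. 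The key identity it exploits is the first-order pullback expansion $\tpullbk{i}{t}X = X - t[F^i,X] + o(t)$ (Lemma~\ref{lem:contPB}), which only needs $t>0$, and the ``Towers of Hanoï'' construction (Lemma~\ref{lem:hanoi}) arranges three separated time scales $u_j(t)\ll v(t)\ll u(t)\ll 1$ so that the bracket appears in a difference quotient of two pulled-back families. The ``asymptotically generates'' notion and Lemma~\ref{lem:asymptoticRank} then resolve exactly the bookkeeping obstacle you flag (survival of bracket directions under surrounding pullbacks, and independence of concatenated realisations), which your outline acknowledges but leaves open.

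The fallback via the Sussmann orbit theorem has the same positive-time problem: the orbit theorem concerns the group orbit, which uses both forward and backward flows. The relevant positive-time statement (Krener / Jurdjevic--Sussmann: nonempty interior of the reachable set) is not the orbit theorem and, even granted it, gives you an open reachable set but not directly a submersion at a specific $(\vtr{i},\vtr{u})$ with uniformity in the base point $x_0$ --- you would need Sard-type arguments, which is essentially the non-constructive Bakhtin--Hurth route that the paper deliberately replaces. You would also still owe a proof of the ``constrained'' (fixed-total-time) version for the strong bracket condition, which you only gesture at. As written, the proposal therefore has a genuine gap both in its main mechanism (negative times) and in its fallback (wrong theorem, non-uniform, strong case unaddressed).
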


This theorem is a version of Theorem~2 from~\cite{bakhtin&hurt} with an additional uniformity 
with respect to the initial point and the time $t$. Thanks to Theorems~4 and~5 in \cite{bakhtin&hurt}, 
one can deduce the hypotheses of Theorems~\ref{thm=globalRegJumps} 
and \ref{thm=globalRegFixed} from the bracket conditions. 
The proofs in \cite{bakhtin&hurt} are elegant but 
non-constructive; we give a more explicit proof in Section~\ref{alamain}.

%%%%%%%%%%%%
\subsection{Ergodicity}
\label{sec:erg}

\subsubsection{The embedded chain}

\begin{thm}
  [Convergence in total variation --- discrete case]
  \label{THWB}
Suppose there exists $p \in \Gamma$ at which the weak bracket condition holds.
Then the chain $\tilde{Z}$ admits a unique invariant probability $\tilde{\pi}$,
absolutely continuous with respect to the Lebesgue measure~$\lambda_{M \times E}$ 
on $M\times E$. Moreover, there exist 
two constants $c>1$ and $\rho\in (0,1)$ such that, for any $n\in\dN$, 
\[
  \TV{ \prb{\tZ_n \in \cdot} - {\tilde \pi}} \leq c \rho^n
\]
where $\TV{ \cdot }$ stands for the total variation norm.
\end{thm}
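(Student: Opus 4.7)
The plan is to establish a uniform Doeblin-type minorization for the iterated kernel $\tP^m$, from which existence and uniqueness of $\tilde\pi$, exponential convergence in total variation, and absolute continuity of $\tilde\pi$ will all follow through standard arguments combined with the law-of-pure-types proposition (via Lemma~\ref{typepur}).

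First, Theorem~\ref{th:hormander} applied at $p\in\Gamma$, together with Theorem~\ref{thm=globalRegJumps}, produces a neighborhood $\cU_0$ of $p$, a nonempty open set $\cV\subset M$, an integer $m_1$ and a constant $c_1>0$ with
\[
\forall x\in\cU_0,\ \forall i,j\in E,\quad \prb[x,i]{\tZ_{m_1}\in \cdot\times\BRA{j}}\geq c_1\leb{d}(\cdot\cap\cV).
\]
Fixing some $j_0\in E$ and applying Proposition~\ref{Harris1} to the neighborhood $\cU_0$ of $p\in\Gamma$ yields an integer $m_2$ and $\delta>0$ with
\[
\forall x\in M,\ \forall i\in E,\quad \prb[x,i]{\tZ_{m_2}\in\cU_0\times\BRA{j_0}}\geq\delta.
\]
Conditioning at time $m_2$ via the Markov property, for $m=m_1+m_2$, any $(x,i)\in M\times E$, $j\in E$ and any Borel set $A\subset M$,
\[
\prb[x,i]{\tZ_m\in A\times\BRA{j}}\geq \delta c_1\leb{d}(A\cap\cV).
\]
In other words, $M\times E$ is a small set for $\tilde Z$: writing $\beta:=\delta c_1\leb{d}(\cV)>0$ and denoting by $\nu$ the probability measure $\leb{d}(\cdot\cap\cV)/\leb{d}(\cV)\otimes\delta_{j_0}$, one has $\tP^m((x,i),\cdot)\geq \beta\nu$ for every $(x,i)\in M\times E$.

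This uniform Doeblin condition is exactly what is needed for the classical coupling argument for Feller chains: it yields existence and uniqueness of an invariant probability $\tilde\pi$ together with the uniform estimate
\[
\VT{\prb[x,i]{\tZ_n\in\cdot}-\tilde\pi}\leq 2(1-\beta)^{\PENT{n/m}},
\]
so that one may take $c=2/(1-\beta)$ and $\rho=(1-\beta)^{1/m}\in(0,1)$. For absolute continuity, integrating the minorization against $\tilde\pi$ gives $\tilde\pi(A\times\BRA{j})\geq \delta c_1\leb{d}(A\cap\cV)$, so the absolutely continuous part $\tilde\pi_{ac}$ of $\tilde\pi$ is nonzero; by the law of pure types, $\tilde\pi_{ac}$ is (up to a multiplicative constant) itself invariant for $\tP$, and uniqueness forces $\tilde\pi=\tilde\pi_{ac}$.

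The only delicate point in this plan is bookkeeping with the discrete component: one needs Theorem~\ref{thm=globalRegJumps} to provide a minorization that is valid for every outgoing discrete state $j$, and the accessibility step has to deliver a specific incoming state $j_0$ to feed into it. Both requirements are built into the uniformity in $i,j\in E$ in Theorem~\ref{thm=globalRegJumps} and Proposition~\ref{Harris1}, so no additional analytic difficulty appears; once the small-set property is in hand, all assertions of the theorem follow from entirely standard Markov chain arguments.
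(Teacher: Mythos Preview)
Your proof is correct and uses the same two ingredients as the paper's own argument (Proposition~\ref{Harris1} for accessibility and Theorem~\ref{th:hormander}/\ref{thm=globalRegJumps} for the local absolute continuity), combined in the same order and concluded with the law of pure types for the absolute continuity of~$\tilde\pi$. The only cosmetic difference is that the paper keeps the two properties separate --- ``every point reaches $\cU_0\times E$ with uniform probability in $m$ steps'' and ``$\cU_0\times E$ is a small set'' --- and then invokes Harris-chain theory, whereas you compose them explicitly into a single full-state-space Doeblin minorization $\tP^{m_1+m_2}\geq \beta\nu$; your route is marginally more direct and makes the coupling bound completely explicit, while the paper's phrasing is closer to the standard textbook formulation.
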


\begin{proof}
By Proposition~\ref{Harris1} and Theorem~\ref{th:hormander}, 
there exist a neighborhood $\cU_0$  of $p$, integers $m$ and $K$, a measure $\psi$ on 
$M \times E$ absolutely continuous with respect to $\lambda_{M \times E}$ and $c > 0$ 
such that, setting $A = \cU_0 \times E$,
\begin{enumerate}[label={\bfseries(\roman*)}]
  \item \label{it:paf} $\prb[x,i]{\tZ_m \in A} \geq \delta$ for all $(x,i) \in M \times E$,
  \item \label{it:paf2} $\prb[x,i]{\tZ_K \in \cdot} \geq c \psi(\cdot)$ for all $(x,i) \in A$.
\end{enumerate}
These two properties make $\tZ$ an Harris chain  with recurrence set $A$ 
 (see e.g.~\cite[Section~7.4]{durrett}). It is recurrent, aperiodic (by~\ref{it:paf}) 
 and the first hitting time of 
 $A$ has geometric tail (by~\ref{it:paf} again). Therefore,  by usual arguments, 
 two copies of $\tZ$ may be coupled in a time $T$ that has geometric tail; 
 this implies the exponential convergence in total variation toward its 
 (necessarily unique) invariant probability  $\tilde{\pi}$ (see e.g. the proof 
 of Theorem~4.10 in \cite[Section~7.4]{durrett} or \cite[Section~I.3]{Lindvall}, 
  for details).

Finally, observe that, by~\ref{it:paf2} and Theorem~\ref{th:hormander}, $\tilde{\pi} \geq  \delta c \psi$. 
Therefore $\tilde{\pi}_{ac}$ (the absolutely continuous part of $\tilde{\pi}$ with respect 
to $\lambda_{M \times E}$) is non zero. Then, Proposition~\ref{typepur} 
ensures that $\tilde{\pi}$ is absolutely continuous with respect to $\lambda_{M \times E}$.
\end{proof}

As pointed out in \cite[Theorem~1]{bakhtin&hurt} (see also Theorem~\ref{homeo}), 
 under the hypotheses of Theorem~\ref{THWB}, ${(Z_t)}_{t\geq 0}$ 
admits a unique invariant probability measure $\pi$, and $\pi$ is absolutely 
continuous with respect to $\lambda_{M \times E}$. Moreover, under the hypotheses 
of Theorem~\ref{THWB},  with probability one
\[ 
\lim_{n \rightarrow \infty} \tilde{\Pi}_n = \tilde{\pi} 
\quad\text{and}\quad
\lim_{t \rightarrow \infty} \Pi_t = \tilde{\pi} \tK.
\]
%\begin{proof} Follows from Theorem \ref{THWB} and Propositions~\ref{tension1},
%\ref{homeo} and \ref{tension2}.
%\end{proof}

Under the strong bracket assumption, we prove in the next section
that the distribution of $Z_t$ itself converges, and not only its empirical
measure.

%%%%%%
\subsubsection{The continuous time process}

\begin{thm}[Convergence in total variation --- continuous case]
  \label{th:cvgContinue}
  Suppose that there is a point $p \in \Gamma$ at which the strong bracket
  condition is satisfied. Let $\pi$ be the unique invariant probability measure of $Z$.
  Then there exist two constants $c>1$ and $\alpha>0$ such that, for any $t\geq 0$, 
  \begin{equation}
    \TV{\prb{Z_t \in \cdot} - \pi} \leq c e^{-\alpha t}.
    \label{eq=convergenceContinue}
  \end{equation}
\end{thm}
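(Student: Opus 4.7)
The plan is to establish a uniform Doeblin condition at a fixed time~$T$: to produce $T,\alpha>0$ and a non-trivial measure $\nu$ on $M\times E$ such that
\[
\forall (x,i)\in M\times E,\qquad P_T((x,i),\cdot)\geq \alpha \nu(\cdot).
\]
Once this is achieved, \eqref{eq=convergenceContinue} follows by a standard coupling argument: on each time interval $[kT,(k+1)T]$ two copies of the process can be coupled with probability at least $\alpha$ independently of the past, so the total variation distance decays like $(1-\alpha)^{\lfloor t/T\rfloor}$.

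To build this minorization, I would first combine Theorem~\ref{th:hormander} (applied at $p$ under the strong bracket condition) with Theorem~\ref{thm=globalRegFixed} to obtain a neighborhood $\cU_0$ of $p$, a neighborhood $\cV$ of some $y\in M$, times $t_0,\varepsilon>0$ and a constant $c_0>0$ with
\[
\forall x\in\cU_0,\ \forall i,j\in E,\ \forall t\in[t_0,t_0+\varepsilon],\qquad
P_t((x,i),\cdot\times\{j\})\geq c_0\,\leb{d}(\cdot\cap\cV).
\]
Exploiting the scaling flexibility of the Hörmander/Chow-type construction behind Theorem~\ref{th:hormander}, the composite flow realizing the submersion can be taken with arbitrarily small total time, so $y$ may be chosen arbitrarily close to $p$; shrinking $\cU_0$, one may assume $\cV\subset\cU_0$. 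This inclusion is what enables one to \emph{iterate} the minorization: for every $n\geq 1$, $x\in\cU_0$ and $t\in[nt_0,n(t_0+\varepsilon)]$, convolving $n$ copies of the minorization yields $P_t((x,i),\cdot)\geq c_0^{\,n}\,\nu_n(\cdot)$ for some non-trivial, absolutely continuous measure~$\nu_n$. The crucial feature is that the length of the valid time window grows linearly with~$n$.

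Next, I would apply Proposition~\ref{Harriscont1} with target~$\cU_0$ to obtain a finite open covering $\cO^1,\ldots,\cO^N$ of $M$, times $t_1,\ldots,t_N$ and $\delta>0$ such that $\prb[x,i]{Z_{t_k}\in \cU_0\times\{j\}}\geq \delta$ for all $i,j\in E$ and $x\in\cO^k$. The Markov property then gives, for $x\in\cO^k$ and any $s\in[nt_0,n(t_0+\varepsilon)]$,
\[
P_{t_k+s}((x,i),\cdot)\geq \delta\,c_0^{\,n}\,\nu_n(\cdot).
\]
Taking $n$ large enough that $n\varepsilon>\max_k t_k-\min_k t_k$, the intersection $\bigcap_{k=1}^N [t_k+nt_0,t_k+n(t_0+\varepsilon)]$ is non-empty; any $T$ in it yields the desired Doeblin condition, since for every $(x,i)\in M\times E$ one picks $k$ with $x\in\cO^k$ and reads off $P_T((x,i),\cdot)\geq \delta c_0^{\,n}\nu_n(\cdot)$.

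The hard part will be the \emph{synchronization}: the window $[t_0,t_0+\varepsilon]$ furnished by Theorem~\ref{thm=globalRegFixed} need not be large enough to absorb the discrepancy between the return times $t_k$ produced by Proposition~\ref{Harriscont1}. Iterating the minorization widens this window to any desired size, but only by virtue of the inclusion $\cV\subset\cU_0$, which in turn rests on the \emph{strong} form of the bracket condition --- precisely the assumption that distinguishes Theorem~\ref{th:cvgContinue} from Theorem~\ref{THWB}.
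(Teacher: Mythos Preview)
Your overall architecture --- produce a uniform one-step Doeblin minorization $P_T\geq\alpha\nu$ by (a) obtaining a local fixed-time minorization from Theorem~\ref{thm=globalRegFixed}, (b) iterating it to enlarge the admissible time window, and (c) synchronizing the finitely many return times from Proposition~\ref{Harriscont1} --- is exactly the paper's strategy.

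The one shaky step is the claim that one may arrange $\cV\subset\cU_0$. First, ``shrinking $\cU_0$'' goes the wrong way; presumably you mean shrinking~$\cV$. More importantly there is a circularity: the neighborhood $\cU_0$ delivered by Theorem~\ref{thm=globalRegFixed} depends on the chosen $(\bi,\bu)$ and on $t_0$, so when you rescale the times to push the center of $\cV$ (which is $\Phi^{i_m}_{t_0-\sum u_k}(y)$, not $y$) towards $p$, the set $\cU_0$ may shrink as well. Nothing in the statement ensures $\cU_0$ stays large enough to swallow~$\cV$; making this rigorous would require an extra uniformity argument on the local-inversion step behind Lemma~\ref{lem:rankToRegularity}.

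The paper sidesteps this entirely. Rather than forcing $\cV\subset\cU_0$, it uses Proposition~\ref{Harriscont1} a \emph{second} time to return from $\cV$ to $\cU_0$: after shrinking $\cV$ so that it lies in one of the $\cO^k$, there is a fixed $s$ with $\prb[y,i]{Z_s\in\cU_0\times\{j\}}\geq\beta_1$ for all $y\in\cV$. Composing ``$\cU_0\to\cV$ in time $\in[s_0,s_0+\varepsilon]$'' with ``$\cV\to\cU_0$ in time $s$'' gives a loop $\cU_0\to\cU_0$ valid on a window $[t_0,t_0+\varepsilon]$, which one then iterates and synchronizes exactly as you do. This closes the gap with no appeal to scaling, and the strong bracket condition is used only where it is genuinely needed --- to obtain the fixed-time estimate~\eqref{eq=regAtFixedTimes}.
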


\begin{proof}
The proof consists in showing that there exist a neighborhood $\cU$ of $p$,
 $t>0$, $j\in E$ and $\beta>0$ such that for all $x\in M$ and $i\in E$,
  \begin{equation}
    \label{eq=uniformReturn}
    \prb[x,i]{Z_t \in \cU\times\BRA{j}} \geq \beta.
  \end{equation}
  This property and \eqref{eq=regAtFixedTimes} ensure that 
two processes starting from anywhere can be coupled in some time $t$ 
with positive probability. This, combined with Theorem~\ref{th:hormander}, 
implies~\eqref{eq=convergenceContinue} by the usual coupling argument (see \cite{Lindvall}).

Theorem~\ref{th:hormander} gives us two open sets $\cU_0$, $\cV$ (with $p\in\cU_0$), 
a time $s_0$  and $\varepsilon>0$ such that
   \begin{equation} \label{eq=fixedTimesAgain}
     \forall x\in \mathcal{U}_0, \forall t \in [s_0, s_0 + \varepsilon], \forall i,  \forall j,  \quad
     \prb[x,i]{Z_t \in \cdot \times \{j\}} \geq c \lambda_{\dR^d}\PAR{ \cdot \cap \cV}.
   \end{equation}
Moreover, we have seen in Proposition~\eqref{Harriscont1} that there exist
$m$ times $ t_1, \ldots, t_m > 0 $, $m$ open sets $\cO^1,\ldots,\cO^m$ (covering $M$) 
and $\delta > 0$ such that, for all $i,j \in E$ and $x \in \cO^k$,
\begin{equation}\label{plusieurs}
  \prb[x,i]{Z_{t_k} \in \cU_0 \times \{j\}} \geq \delta.
\end{equation}

We can suppose that $ \cV $ is included in one of the $\cO^k$ (just shrink $\cV$ if necessary). 
Then, there exists $s\in \{t_1, \ldots, t_m\} $ such that
    \begin{align} \label{eq=blah}
  \forall y\in \cV, \forall i,\forall j,\quad
  \prb[y,i]{Z_{s} \in  \cU_0 \times \{j\}} &\geq \beta_1.
\end{align}
From (\ref{eq=fixedTimesAgain}) and (\ref{eq=blah}), we get a time $t_0(=s_0+s)$ 
and $\beta_2>0$, such that
\[
     \forall x\in \mathcal{U}_0, \forall t \in [t_0, t_0 + \varepsilon], \forall i,  \forall j, \quad
     \prb[x,i]{Z_t \in \cU_0 \times \{j\} } \geq \beta_2.
\]
The Markov property then gives that, for every integer $n$, one has
 \begin{equation}
     \label{mou3}
     \forall x\in \mathcal{U}_0, \forall t \in [nt_0, nt_0 + n\varepsilon], \forall i, \forall j,  \quad
     \prb[x,i]{Z_{t} \in \cU_0  \times \{j\}} \geq \beta_2^n.
   \end{equation}
We can suppose that $t_1$ is the largest of the $m$ times ${(t_k)}_{1\leq k\leq m}$. 
Let $n$ be such that for every $k$ between $1$ and $m$ there exists $v_k\in [0,n\varepsilon]$ 
such that $t_1=t_k+v_k$.  Take such numbers ${(v_k)}_{1\leq k\leq m}$. If $x$ belongs to $\cO^k$, 
by (\ref{plusieurs}) and (\ref{mou3}), for all $i$ and $j$,
 \[
   \prb[x,i]{Z_{t_1+nt_0} \in \cU_0  \times \{j\}}
   = \prb[x,i]{Z_{t_k+nt_0+v_k} \in \cU_0  \times \{j\}}\geq \delta\beta_2^n.
 \]
 This concludes the proof.
\end{proof}

%%%%%%%%%%%%%%%%%%%%%%%%%%%%%%%%
\section{Examples}
\label{sec:elementary}

\subsection{On the torus}
\label{sec:torus}
Consider the system defined on the torus $\dT^d = \dR^d/\dZ^d$ by
the constant vector fields $F^i = e^i$, where $(e_1, \ldots e_d)$ is the
standard basis on $\dR^d$. Then, as argued in \cite{bakhtin&hurt}, the
weak bracket condition holds everywhere, and the strong condition
does not hold. Therefore the chain $\tZ$ is ergodic and converges
exponentially fast, the empirical means of $(\tZ_n)$ and $(Z_t)$ converge, but 
the law of $Z_t$ is singular with respect to the invariant measure for any $t>0$ 
provided it is true for $t=0$. 

\subsection{Two planar linear flows}\label{ex:linear}
Let $A$ be a $2 \times 2$ real matrix whose eigenvalues
$\eta_1,\eta_2$ have negative real parts. Set  $E= \{0,1\}$ and
consider the process defined on $\dR^2 \times E$ by
\[
F^0(x) = Ax \quad\text{and}\quad F^1(x) = A(x-a)
\]
for some $a \in \dR^2$. The associated flows are
$\Phi^0_t(x) = e^{tA} x$ and $\Phi^1_t(x) = e^{tA}(x-a) + a$. Each flow admits a 
unique equilibrium (which is attracting): 0 and $a$ respectively. 

First note that, by using the  Jordan decomposition  of $A,$ it is possible
to find a scalar product $\langle\, \cdot \,\rangle$ on $\dR^2$ (depending on $A$)
and some number $0 < \alpha \leq
\min(- \mathrm{Re}(\eta_1), - \mathrm{Re}(\eta_2))$ such that
$\langle Ax,x\rangle \leq - \alpha \langle x,x\rangle$.  Therefore
\[
\langle A(x-a),x\rangle
\leq - \alpha \langle x,x\rangle - \langle Aa,x\rangle \leq \|x\|(-\alpha \|x\| + \|Aa\|).
\]
This shows that, for $R > \|Aa\|/\alpha$, the ball
$M = \{x \in \dR^2,  \NRM{x} \leq R\}$ is positively invariant by
$\Phi^0$ and $\Phi^1.$ Moreover every solution to the differential inclusion
induced by $\{F^0, F^1\}$ eventually enters $M$. In particular $M \times E$
is an absorbing set for the process $Z$.

Another remark that will be useful in our analysis is that
\[
\det(F^0(x), F^1(x) ) = \det(A) \det(a,x),
\]
so that
\begin{equation}
\label{detAx}
\det(F^0(x), F^1(x) ) > 0 \; \text{ (resp.\ $=0$)}
\Leftrightarrow \det(a,x) > 0 \; \text{ (resp.\ $=0$)}.
\end{equation}
\subsubsection*{Case 1: $a$ is an eigenvector}\label{ex=degenerate}
If $a$ is an eigenvector of $A$, then the line $\dR a$ is invariant by both flows, so that
\[
\Gamma = \overline{\gamma^+(0)} =  [0,a]
\]
and there is a unique invariant probability $\pi$ (and its support is $\Gamma$ 
by Proposition~\ref{thsupport}).
Indeed, it is easily seen that $\Gamma$ is an attractor for the
set-valued dynamics induced by $F^0$ and $F^1.$ Therefore the support
of every invariant measure equals $\Gamma$. If we consider the
process restricted to $\Gamma$, it becomes one-dimensional
and the strong bracket condition holds, proving uniqueness.

\begin{rem}
If $X_0 \not \in \dR a$, $X$
will never reach $\Gamma$.  As a consequence, the law of $X_t$ and $\pi$
 are singular for any $t\geq 0$. In particular, their total variation distance
is constant, equal to $1$. Note also that, the strong bracket condition being 
satisfied everywhere except on  $\dR a$, the law of $X_t$ at any positive finite 
time has a non trivial absolutely continuous part. 
\end{rem}
\begin{rem}
\label{rem=unidim}
Consider the following example: $A=-I$, $a=(1,0)$ and $\dR a$ is identified
to $\dR$. If the jump rates are constant and equal to $\lambda$, it is easy
to check (see \cite{KB,RMC}) that the invariant measure $\mu$ on
$[0,1]\times \{0,1\}$ is given by:
\[
\mu = \frac{1}{2} \left( \mu_0 \times \delta_0 +  \mu_1 \times \delta_1\right),
\]
where $\mu_0$ and $\mu_1$ are Beta laws on $[0,1]$:
\begin{align*}
  \mu_0(dx) &= C_\lambda x^{\lambda-1}(1-x)^\lambda\,dx, \\
  \mu_1(dx) &= C_\lambda x^{\lambda}  (1-x)^{\lambda-1}\,dx.
\end{align*}
In particular, this example shows that the density of the invariant
measure (with respect to the Lebesgue measure) may be unbounded:
when the jump rate $\lambda$ is smaller than $1$, the densities blow
up at $0$ and $1$.
\end{rem}

\subsubsection*{Case 2: Eigenvalues are reals and $a$ is not an eigenvector}
Suppose that the two eigenvalues $\eta_1$ and $\eta_2$ of $A$ are negative real numbers
and that $a$ is not an eigenvector of $A$.

Let $\gamma_0 = \{\Phi^0_t(a), \, t \geq 0\}$ and 
$\gamma_1 = \{\Phi^1_t(0), \, t \geq 0\}$. Note that $\gamma_1$ and
$\gamma_0$ are image of  each other by the transformation $T(x) = a - x.$
The curve $\gamma_0$ (respectively $\gamma_1$)  crosses the line
$\dR a$ only at point $a$ (respectively $0$). Otherwise, the trajectory
$t \mapsto \Phi^0_t(a)$ would have to cross the line
$\mathrm{Ker}(A-\lambda_1 I)$  which is invariant. This makes the curve
$\gamma = \gamma_0 \cup \gamma_1$ a simple closed curve
in $\dR^2$ crossing $\dR a$ at $0$ and $a$. By Jordan curve Theorem,
$\dR^2 \setminus \gamma = \cB \cup \cU$ where $\cB$ is a  bounded component
and $\cU$  an unbounded one. We claim that
\[
\Gamma = \overline{\cB}.
\]
To prove this claim, observe that thanks to~\eqref{detAx},
$F^0$ and $F^1$ both point inward $\cB$ at every point of $\gamma$. This makes
$\overline{\cB}$ positively invariant by $\Phi^0$ and $\Phi^1$.
Thus $\Gamma \subset \overline{\cB}$. Conversely,
$\gamma  \subset \Gamma$ (because $0$ and $a$ are accessible
from everywhere). If $x \in \cB$ there exists $s >0$ such that
$\Phi^0_{-s}(x) \in \gamma$ (because
$\lim_{t \rightarrow - \infty} \ABS{\Phi^0_t(x)} = +\infty)$ and necessarily
$\Phi^0_{-s}(x) \in \gamma_1.$ This proves that $x \in \gamma^+(0).$
 Finally note that the strong bracket condition is verified in
 $\Gamma \setminus  \dR a$, proving uniqueness and absolute continuity
 of the invariant probability.

\begin{rem}
Note that if the jump rates are small,
the situation is similar to the one described in Remark~\ref{rem=unidim}: 
the process spends a large amount of time near the attractive points,
and the density is unbounded at these points. By the way, this is also the case on 
the boundary of $\Gamma$.
\end{rem}

\subsubsection*{Case 3: Eigenvalues are complex conjugate}
Suppose now that the eigenvalues have a nonzero imaginary part. By
Jordan decomposition, it is easily seen that trajectories of $\Phi^i$ converge
in spiralling, so that the mappings
$\tau^i(x) = \inf\{t > 0: \: \Phi^i_t(x) \in \dR a\}$ and $h^i(x) = \Phi^i_{\tau^i(x)}$
are well defined and continuous. Let $H : \dR a \to \dR a$ be the map
$h^0 \circ h^1$ restricted to $\dR a.$ Since two different trajectories of the
same flow have empty intersection, the sequence $x_n = H^n(0)$ is
decreasing (for the ordering on $\dR a$ inherited from $\dR$). Being
bounded (recall that $M$ is compact and positively invariant), it converges
to $x^* \in \dR a$ such that $x^* = H(x^*)$.  Let now
$\gamma^0 = \{\Phi_t^1(x^*), \; 0 \leq t \leq \tau^1(x^*)\},
\gamma^1 = \{\Phi_t^0(h^1(x^*)),\; 0 \leq t \leq \tau^0(h^1(x^*))\}$
and $\gamma = \gamma^0 \cup \gamma^1.$ Reasoning as previously
shows that $\Gamma$ is the bounded component of
$\dR^2 \setminus \gamma$ and that there is a unique invariant and
absolutely continuous invariant probability.

\begin{figure}
\begin{center}
 \includegraphics[scale=0.5]{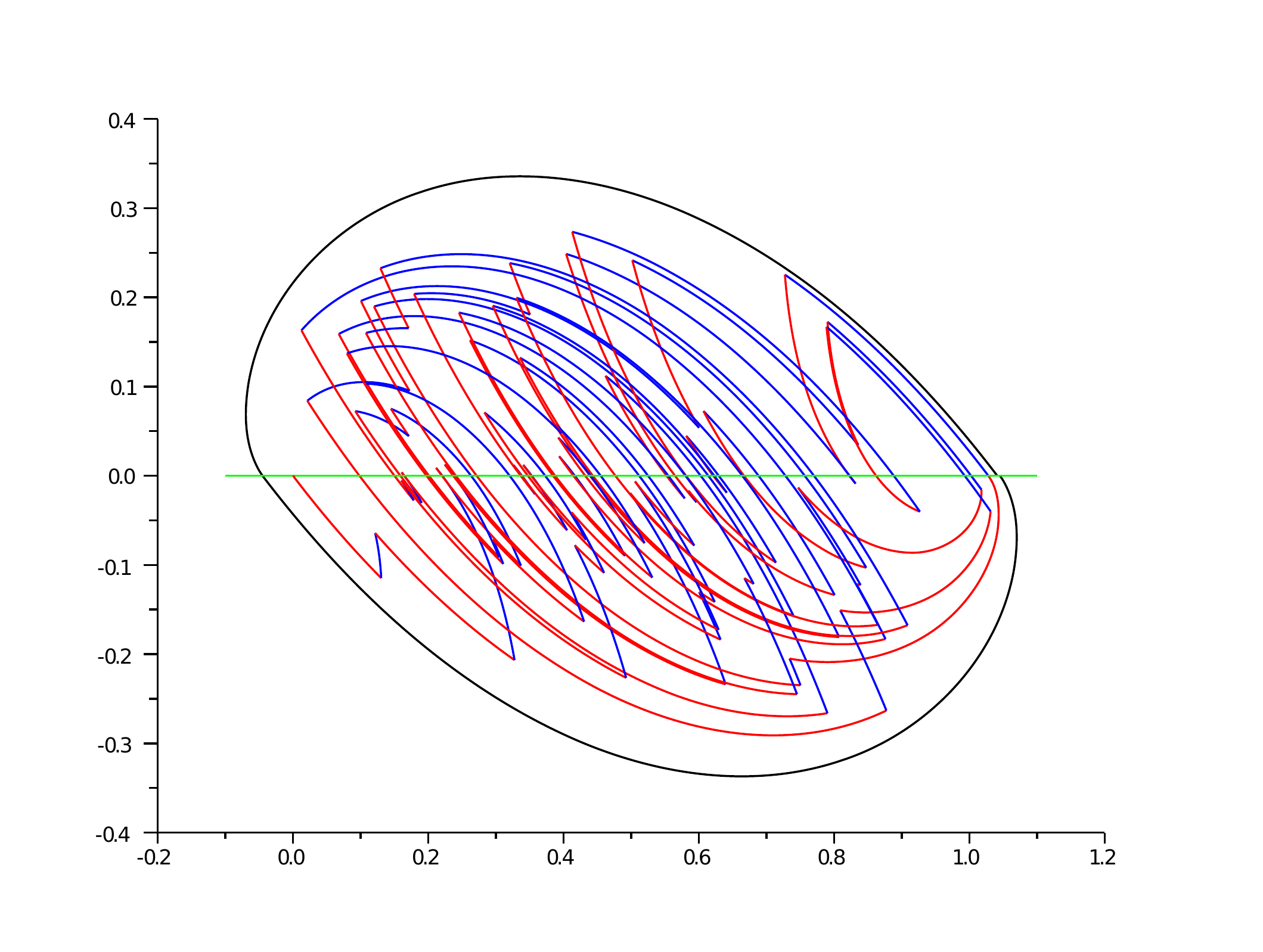}
 \caption{A sample path of $X_t$ (red and blue lines) and the boundary of the support of the 
 invariant probability (black line) for the third case in Section~\ref{ex:linear}, 
 where $A$ and $a$ are given by~\eqref{eq:ex-linear}.}
 \label{fi:drotation}
\end{center}
\end{figure}
We illustrate this situation in Figure~\ref{fi:drotation}, with
\begin{equation}\label{eq:ex-linear}
A =
\left(
\begin{array}{cc}
-1 & -1\\
1 &  -1
\end{array}
\right)
\quad\text{and}\quad
%\[
a=
\left(
\begin{array}{c}
1 \\
0
\end{array}
\right).
\end{equation}

\begin{rem}
Once again, if the jump rates are small,
then the density is unbounded at 0 but also on the set
\[
\BRA{\Phi^1_t(0),\ t\geq 0}\cup \BRA{\Phi^0_t(a),\ t\geq 0}.
\]
\end{rem}

\subsection{A simple criterion for the accessible set to have a non empty interior}

Here is a simple criteria in dimension 2 that ensures that $\Gamma$ has a 
non empty interior. 

\begin{prop}\label{prop:crit-2d}
Assume that $M\subset\dR^2$, $E=\BRA{0,1}$ and that $F^1$ 
has a globally attracting equilibrium $p$ such that the eigenvalues 
of $DF^1(p)$ have negative real parts and that $F^0(p) \neq 0.$ Then
$p$ lies in the interior of $\Gamma$.
\end{prop}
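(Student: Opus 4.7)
The plan is to show first that $p \in \Gamma$, and then that a whole neighbourhood of $p$ lies in $\gamma^+(p) \subseteq \Gamma$. The first point is immediate: since $p$ is globally attracting for $\Phi^1$, $\Phi^1_t(x) \to p$ for every $x \in M$, so $p \in \overline{\gamma^+(x)}$ for every $x$, giving $p \in \Gamma$. Moreover, Proposition~\ref{attractequil}\ref{gammaP2} (whose hypothesis holds since $\cB(p) = M$) yields $\Gamma = \overline{\gamma^+(p)}$, so it is enough to find a neighbourhood of $p$ inside $\gamma^+(p)$ itself.

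For the second point, I would introduce the smooth arc $C := \{\Phi^0_s(p) : 0 \le s \le \varepsilon\}$, an embedded $C^1$-curve starting at $p$ with initial tangent $F^0(p)\neq 0$. The main claim is that, for every $q$ in a sufficiently small neighbourhood of $p$, the backward $\Phi^1$-trajectory $t\mapsto \Phi^1_{-t}(q)$ (well-defined for all $t\ge 0$ since $\Phi^1_t$ is a diffeomorphism) meets $C$ in positive time, at some point $\Phi^0_{s_q}(p)$ with $s_q \in (0,\varepsilon]$ and some time $t_q > 0$. This would immediately yield
\[
q = \Phi^1_{t_q}\bigl(\Phi^0_{s_q}(p)\bigr) \in \gamma^+(p),
\]
so that $p \in \mathrm{int}(\gamma^+(p)) \subseteq \mathrm{int}(\Gamma)$.

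The justification of the crossing relies on the backward linearisation $e^{-tDF^1(p)}$, whose eigenvalues have positive real parts and which therefore expands distances in a neighbourhood of $p$. When $DF^1(p)$ has non-real eigenvalues $-\alpha \pm i\beta$ with $\alpha,\beta > 0$, this backward linear flow is a rotating expansion, so the backward orbit of any $q$ close to $p$ spirals outwards through every direction from $p$ and must meet the transverse arc $C$ after finitely many revolutions; a standard $C^1$-transversality argument then transfers this to the true nonlinear flow. When the eigenvalues of $DF^1(p)$ are real, the backward trajectories stretch along the two eigendirections without rotation and a single one may miss $C$; one then has to iterate the construction, intercalating short pieces of the form $\Phi^1_\tau \circ \Phi^0_\sigma$ and exploiting both the second-order curvature of $C$ (governed by $DF^0(p) F^0(p)$) and the forward invariance of $M$ under $\Phi^0$ so as to reach every direction from $p$. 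The main obstacle lies precisely in this real eigenvalue case, where these ingredients have to be combined carefully; the spiralling complex case is by far the cleanest.
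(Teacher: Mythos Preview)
Your approach in the complex-eigenvalue case coincides with the paper's. The paper's proof is a one-sentence reference to a figure: from $p$ follow $F^0$ to some point $x$, then follow the inward $\Phi^1$-spiral back toward $p$, switching to $F^0$ along the way to sweep out a region containing a neighbourhood of $p$. This is exactly the forward reformulation of your backward-hitting argument, and both are valid when $DF^1(p)$ has non-real eigenvalues.

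The real-eigenvalue case, which you rightly flag as the obstacle, is a genuine gap --- and not only in your write-up: the paper's figure shows a spiral and its proof does not address the node case either. More importantly, your proposed patch (iterating $\Phi^1_\tau\circ\Phi^0_\sigma$, exploiting the curvature $DF^0(p)F^0(p)$ of $C$, and invoking forward invariance of $M$) cannot succeed in the stated generality, because the proposition as stated is actually false. Take $M$ to be the closed unit disc, $F^1(x,y)=(-x,-y)$ so that $p=0$ with $DF^1(p)=-I$, and $F^0(x,y)=(1-x^2-y^2,\,0)$. Then $M$ is positively invariant under both flows and $F^0(0)=(1,0)\neq 0$, yet the $x$-axis is invariant under both $\Phi^0$ and $\Phi^1$; hence $\gamma^+(0)\subset\{y=0\}$ and $\Gamma=\overline{\gamma^+(0)}=[0,1]\times\{0\}$ has empty interior. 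None of your ingredients rescue this: $DF^0(0)F^0(0)=0$ so there is no curvature to use, and compactness never forces the orbit off the axis since the $\Phi^0$-orbit of $p$ simply converges to the equilibrium $(1,0)$. What is really needed is an additional transversality or bracket-type hypothesis at $p$ excluding a common one-dimensional invariant curve for the two flows; without it the real-eigenvalue case cannot be closed.
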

\begin{proof}
Proposition \ref{attractequil} ensures that $p$ belongs to $\Gamma$. As 
illustrated by Figure~\ref{fi:2d}, from the equilibrium $p$, one can follow $F^0$
and reach $x$, then follow $F^1$, and switch back to $F^0$ to reach any point 
in the shaded region.
\begin{figure}
\begin{center}
\begin{tikzpicture}[scale=2,decoration={markings}]
  % Noeuds de reference
  \path (-1,0) to[out=5,in=175] node[coordinate,midway] (zero) {} (1,0);
  \node[coordinate] (one) at ($(zero)+(0.05,0.05)$) {};
  % ombrage
  \begin{scope}
    \clip (-1,-0.4) to[out=5,in=175] (1,-0.4)
       -- (1,0.2)   to[out=175,in=5] (-1,0.2)
       -- cycle;
  \shade[left color=blue!50!red!50,]
  %($(0,0) !2.5*2.5! 2.5*180:(one)$)
  (1,0.5)
  \foreach \i in {2.5,2.55,...,3.2}{%
    --($(zero) !\i*\i! \i*180:(one)$)}
    -- ++(1,0) -- cycle;
  \end{scope}
  % champ horizontal
  \foreach \y in {-1,-0.8,...,1}
  \draw[postaction={decorate,decoration={mark=at position .8 with {\arrow{>}}}}]
     (-1,\y) to[out=5,in=175] (1,\y);

   %spirale
  \draw[thick,postaction={decorate,decoration={%
    mark=between positions .2 and .8 step .2 with {\arrow{>}},
    reverse path}}]
  (zero)
  \foreach \i in {0,0.05,...,4}{%
  --($(zero) !\i*\i! \i*180:(one)$)};

  %noms
  \fill (zero) circle (0.8pt) node[above right] {$p$};
  \fill (0.98,0)  circle (0.8pt) node[above left] {$x$};
\end{tikzpicture}
\caption{The flows near the attracting point $p$ in Proposition~\ref{prop:crit-2d}.}
  \label{fi:2d}
\end{center}
\end{figure}
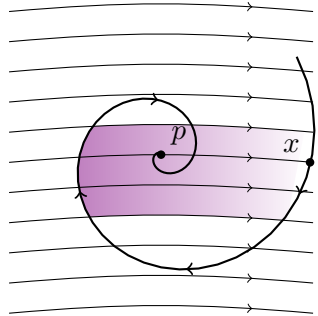
\end{proof}

\subsection{Knowing the flows is not enough}
\label{sec:ovidiu}

In this section we study a PDMP on $\dR^2\times \BRA{0,1}$ 
such that the strong bracket condition holds everywhere except on $\Gamma$ 
and which may have one or three ergodic invariant probability measures, 
depending on the jump rates of the discrete part of the process.

This model has been suggested by O.~Radulescu. The continuous
part of the process takes its values on $\dR^2$ whereas its discrete
part belongs to $\BRA{0,1}$. For sake of simplicity we will denote (in a
different way than in the beginning of the paper) by $(X_t,Y_t)\in\dR^2$
the continuous component. The discrete component ${(I_t)}_{t\geq 0}$
is a continuous time Markov chain on $E=\BRA{0,1}$ with jump
rates ${(\lambda_i)}_{i\in E}$. Let $\alpha>0$. The two vector fields $F^0$
and $F^1$ are given by
\[
F^0(x,y)=
\left(
\begin{array}{c}
 -x+\alpha\\
 -y+\alpha
\end{array}
\right)
\quad\text{and}\quad
F^1(x,y)=
\left(
\begin{array}{c}
 \displaystyle{-x+\frac{\alpha}{1+y^2}}\\
 \displaystyle{-y+\frac{\alpha}{1+ x^2}}
\end{array}
\right)
\]
with $(x,y)\in\dR^2$.
Notice that the quarter plane $(0,+\infty)^2$ is positively invariant by $\Phi^0$ 
and $\Phi^1$. See Figure~\ref{fi:ovidiu}. In the sequel we assume that $(X_0,Y_0)$ 
belongs to $(0,+\infty)^2$. 

%%%%%
\subsubsection{Properties of the two vector fields}

Obviously, the vector fields $F^0$ has a unique stable point $(\alpha,\alpha)$. 
The description of $F^1$ is more involved and depends on $\alpha$. 
\begin{lem}
Let us define
\begin{equation}\label{eq:def-a-b}
a=\frac{\alpha+\sqrt{\ABS{\alpha^2-4}}}{2}
\quad\text{and}\quad
b=\PAR{\frac{\sqrt{4/27+\alpha^2}+\alpha}{2}}^{1/3}
-\PAR{\frac{\sqrt{4/27+\alpha^2}-\alpha}{2}}^{1/3}.
\end{equation}
Notice that $b$ is positive and is the unique real solution of $b^3+b=\alpha$.
One has
\begin{itemize}
\item if $\alpha\leq 2$, then $F^1$ admits a unique critical point $(b,b)$
and it is stable,
\item if $\alpha>2$, then $F^1$ admits three critical points: $(b,b)$ is unstable
whereas $(a,a^{-1})$ and $(a^{-1},a)$ are stable.
\end{itemize}
\end{lem}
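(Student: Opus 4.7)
The plan is to first find the critical points explicitly by solving the system $F^1(x,y)=0$, then analyze the linearization at each critical point to determine stability.

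First I would write out the equations $x(1+y^2)=\alpha$ and $y(1+x^2)=\alpha$. Subtracting them gives $(x-y) + xy(y-x) = 0$, which factors as $(x-y)(1-xy)=0$. So either $x=y$, in which case $x$ satisfies $x^3+x=\alpha$, a cubic with a unique real root (since the left hand side is strictly increasing in $x$); solving by Cardano yields $x=b$ with $b$ as in \eqref{eq:def-a-b}. Or $xy=1$, in which case substituting $y=1/x$ into $x(1+y^2)=\alpha$ yields $x+1/x=\alpha$, equivalently $x^2-\alpha x+1=0$. This has real solutions if and only if $\alpha\geq 2$; for $\alpha>2$ they are exactly $a$ and $a^{-1}$, giving the two critical points $(a,a^{-1})$ and $(a^{-1},a)$. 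For $\alpha<2$ there is no additional critical point, so $(b,b)$ is unique; at $\alpha=2$ the three points collapse to $(1,1)=(b,b)$.

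Next I would compute the Jacobian
\[
  DF^1(x,y)=\begin{pmatrix} -1 & -\tfrac{2\alpha y}{(1+y^2)^2} \\ -\tfrac{2\alpha x}{(1+x^2)^2} & -1\end{pmatrix}.
\]
At the symmetric point $(b,b)$ the off-diagonal entries coincide: using $b(1+b^2)=\alpha$ one gets $\tfrac{\alpha b}{(1+b^2)^2}=\tfrac{b^2}{1+b^2}$, so both off-diagonals equal $-c$ with $c=2b^2/(1+b^2)$. The eigenvalues are therefore $-1\pm c$. Since $b^3+b=\alpha$ is strictly increasing in $b$, one has $b<1 \iff \alpha<2$, which translates into $c<1$ and hence both eigenvalues strictly negative: $(b,b)$ is asymptotically stable for $\alpha<2$. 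Conversely for $\alpha>2$, $b>1$ gives $c>1$, so one eigenvalue is positive and $(b,b)$ is unstable (a saddle).

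At $(a,a^{-1})$, using the relation $a^2+1=\alpha a$, the two off-diagonal entries simplify to $-2a/\alpha$ and $-2/(\alpha a)$. The characteristic polynomial becomes $(1+X)^2 - 4/\alpha^2$, so the eigenvalues are $-1\pm 2/\alpha$; for $\alpha>2$ both are strictly negative, so $(a,a^{-1})$ is stable, and the symmetric argument handles $(a^{-1},a)$. The only delicate case is $\alpha=2$: the linearization at $(1,1)$ has eigenvalues $0$ and $-2$, so stability does not follow from the linear part. I would handle it separately, either by a direct Lyapunov argument using the symmetric function $V(x,y)=(x-1)^2+(y-1)^2$ and checking that $\langle\nabla V,F^1\rangle\leq 0$ with strict inequality off $(1,1)$, or by invoking the center manifold theorem and checking the sign of the leading nonlinear term along $\{x=y\}$, where the reduced dynamics is $\dot b=-b^3-b+2$. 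This last step is where the main (minor) obstacle lies; the hyperbolic cases are routine.
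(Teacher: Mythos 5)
Your proof is correct in all the hyperbolic cases and takes a noticeably cleaner route than the paper for locating the critical points. Where the paper substitutes $y=\alpha/(1+x^2)$ into the first equation and then recognizes the factorization of the resulting degree-five polynomial $x^5-\alpha x^4+2x^3-2\alpha x^2+(1+\alpha^2)x-\alpha=(x^2-\alpha x+1)(x^3+x-\alpha)$, you subtract the two defining equations and factor $(x-y)(1-xy)=0$ directly, which makes the dichotomy $x=y$ versus $xy=1$ transparent and avoids any ad hoc polynomial factorization. The Jacobian computation and the eigenvalues $-1\pm c$ at $(b,b)$ (with $c=2b^2/(1+b^2)=2-2b/\alpha$, so your form agrees with the paper's $\eta_1=-3+2b/\alpha$, $\eta_2=1-2b/\alpha$) and $-1\pm 2/\alpha$ at $(a,a^{-1})$ are all correct and match. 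You also correctly flag that the paper's argument as written settles only $\alpha<2$ while the statement asserts stability for $\alpha\leq 2$, and that $\alpha=2$ is a genuinely degenerate case with a zero eigenvalue.

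There is, however, a concrete error in the fix you sketch for $\alpha=2$. The eigenvector for the vanishing eigenvalue $\eta_2=1-2b/\alpha$ is $(1,-1)$, not $(1,1)$; the diagonal $\{x=y\}$ is the $(1,1)$-direction, whose eigenvalue at $\alpha=2$ is $\eta_1=-2$, i.e.\ the strongly stable direction. So the center manifold is tangent to $\{x+y=2\}$, not to $\{x=y\}$, and the one-dimensional reduced equation you wrote, $\dot b=-b^3-b+2$, lives on the wrong invariant line (it is the restriction to the stable manifold $\{x=y\}$, which tells you nothing new). If you want to run the center manifold argument, set $u=x+y-2$, $v=x-y$; then $\dot u=-2u+\tfrac14(u^2+v^2)+O(3)$ and $\dot v=-\tfrac12 uv+O(4)$, the center manifold is $u=v^2/8+O(v^3)$, and the reduced dynamics is $\dot v=-v^3/16+O(v^4)$, which does give asymptotic stability of $(1,1)$. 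Your alternative Lyapunov suggestion $V=(x-1)^2+(y-1)^2$ is also more delicate than it may look: the leading part of $\langle\nabla V,F^1\rangle$ near $(1,1)$ is $-2(s+t)^2+st(s+t)$ with $s=x-1$, $t=y-1$, which vanishes identically on $\{s+t=0\}$, so strict negativity fails at quadratic order and you would have to push the expansion further (or invoke LaSalle). In short: the observation that the paper leaves a gap at $\alpha=2$ is a good one, but the specific repair you proposed mislabels the center direction.
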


\begin{proof}
If $(x,y)$ is a critical point of $F^1$ then $(x,y)$ is solution of
\[
\begin{cases}
x(1+y^2)=\alpha\\
y(1+x^2)=\alpha.
\end{cases}
\]
As a consequence, $x$ is solution of
\[
0=x^5-\alpha x^4+2 x^3-2\alpha x^2+(1+\alpha^2)x-\alpha
=(x^2-\alpha x+1)(x^3+x-\alpha).
\]
The equation $x^3+x-\alpha=0$ admits a unique real solution $b$ given by \eqref{eq:def-a-b}. 
It belongs to $(0,\alpha)$. Obviously, if $\alpha\leq 2$, $(b,b)$ is the unique critical point
of $F^1$ whereas, if $\alpha>2$ then $a$ and $a^{-1}$ are the roots
of $x^2-\alpha x+1=0$ and $F^1$ admits the three critical points:
$(b,b)$, $(a,a^{-1})$ and $(a^{-1},a)$. Let us have a look to the stability
of $(b,b)$. 
The eigenvalues of $\mathrm{Jac}(F^1)(b,b)$ are given by
\[
\eta_1=-3+\frac{2b}{\alpha}=-1-2\frac{\alpha-b}{\alpha}
\quad\text{and}\quad \eta_2=
1-\frac{2b}{\alpha}=\frac{b^3-b}{\alpha}
\]
and are respectively associated to the eigenvectors $(1,1)$ and $(1,-1)$.
Since $b<\alpha$, $\eta_1$ is smaller than $-1$. Moreover, $\eta_2$ has the
same sign as $b-1$ \emph{i.e.} the same sign as $\alpha-2$. As a
conclusion, $(b,b)$ is stable (resp.\ unstable) if $\alpha<2$ (resp. $\alpha>2$).

Assume now that $\alpha>2$. Then
$\mathrm{Jac}(F^1)(a,a^{-1})$ has two negative eigenvalues $-1\pm 2\alpha^{-1}$.
Then, the critical points $(a,a^{-1})$ and $(a^{-1},a)$ are stable.
\end{proof}

In the sequel, we assume that $\alpha>2$.
One can easily check that the sets
\[
D=\BRA{(x,x)\, : \, x>0},\quad
L=\BRA{(x,y)\, :\, 0<y<x},\quad
U=\BRA{(x,y)\, :\, 0<x<y}
\]
are strongly positively invariant by $\Phi^0$ and $\Phi^1$. Moreover, 
thanks to Proposition~\ref{attractequil}, the accessible set of $(X,Y)$ is 
\[
\Gamma=\BRA{(x,x)\, :\, x\in[b,\alpha]}.
\]
In the sequel, we prove that $\Gamma$ may, or may not, be the set of all recurrent points,
depending on the jump rates $\lambda_0$ and $\lambda_1$.

\begin{prop}\label{prop:ex-rec-tra}
If $\lambda_1>\lambda_0(c\alpha-1)$, with $c=3\sqrt{3}/8$ then $(X,Y,I)$ admits a unique invariant measure 
and its support is $\Gamma\times E$. 

If $\lambda_1/\lambda_0$ is small enough, then $(X,Y,I)$ admits three ergodic measures 
and they are supported by 
\[
\Gamma \times E=\overline{\gamma^+(\alpha,\alpha)} \times E,
\quad
\overline{ \gamma^+(a,a^{-1})}\times E
\quad\text{and}\quad
\overline{\gamma^+(a^{-1},a)} \times E.
\]
\end{prop}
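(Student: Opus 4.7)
The key to both parts of the proof is a Lyapunov analysis of the squared distance to the diagonal, $h(x,y)=(x-y)^2$, under the two flows. Throughout I write $\mu|_L$ for the restriction of $\mu$ to $L\times E$, etc.

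\textbf{Preliminary computation.} A direct computation yields
\[
\langle F^0(x,y),\nabla h\rangle = -2h, \qquad
\langle F^1(x,y),\nabla h\rangle = 2h\,(g-1),
\]
where $g(x,y)=\alpha(x+y)/((1+x^2)(1+y^2))$. A critical-point analysis on $(0,+\infty)^2$ shows that $g$ attains its global maximum at $(1/\sqrt 3,1/\sqrt 3)\in D$ with value $c\alpha$, where $c=3\sqrt 3/8$.

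\textbf{Part 1 (uniqueness when $\lambda_1>\lambda_0(c\alpha-1)$).} Let $\mu$ be any invariant probability. Since the jump rates of $I$ do not depend on $(X,Y)$, the $I$-marginal of $\mu$ equals the invariant distribution $(\pi_0,\pi_1)=(\lambda_1,\lambda_0)/(\lambda_0+\lambda_1)$. The pairwise disjoint sets $D,L,U$ being positively invariant under both flows, a standard argument gives a decomposition $\mu=\mu|_D+\mu|_L+\mu|_U$ into invariant (unnormalized) measures, each of which once renormalized has $I$-marginal $(\pi_0,\pi_1)$. To show $\mu|_L=\mu|_U=0$, apply Dynkin's formula with the smooth bounded test function $V_\varepsilon(x,y,i)=\log(h(x,y)+\varepsilon)$:
\[
\int \frac{-2h\,\ind_{i=0}+2(g-1)h\,\ind_{i=1}}{h+\varepsilon}\,d\mu = 0.
\]
Letting $\varepsilon\to 0^+$ by dominated convergence ($h/(h+\varepsilon)\to\ind_{\{h>0\}}$) yields
\[
-\mu(\{h>0,\, i=0\}) + \int_{\{h>0,\, i=1\}}(g-1)\,d\mu = 0.
\]
Applied to $\mu|_L$, which is supported in $\{h>0\}\times E$, this gives
\[
\pi_0\,\mu(L\times E) = \int_{\{i=1\}}(g-1)\,d\mu|_L \;\leq\; (c\alpha-1)\,\pi_1\,\mu(L\times E).
\]
If $\mu(L\times E)>0$ this forces $\lambda_1\leq\lambda_0(c\alpha-1)$, contradicting the hypothesis. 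Therefore $\mu|_L=0$, and by symmetry $\mu|_U=0$; so $\mu$ is concentrated on $D\times E$. On $D$ the process is effectively one-dimensional on $[b,\alpha]$ and Theorem~\ref{THWB} (the strong bracket condition is easily checked off the equilibria) gives uniqueness with support $\Gamma\times E$.

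\textbf{Part 2 (three ergodic measures when $\lambda_1/\lambda_0$ is small).} Each of $D\cap M$, $\bar L\cap M$, $\bar U\cap M$ is compact and positively invariant, so Krylov--Bogolyubov yields at least one invariant probability measure on each; call them $\mu_D,\mu_L,\mu_U$. The measure $\mu_D$ is the one given by Part~1, with support $\Gamma\times E=\overline{\gamma^+(\alpha,\alpha)}\times E$. To get a distinct ergodic measure on $\bar L\cap M$, it suffices to show $\mu_L(L\times E)>0$ for $\lambda_1/\lambda_0$ small. We argue by perturbation: in the limit $\lambda_1=0$, the process on $\bar L\cap M$ enters state~$1$ almost surely in finite time and then follows $F^1$ forever, converging to the attracting equilibrium $(a,a^{-1})\in L$; its unique invariant probability is therefore $\delta_{(a,a^{-1})}\otimes\delta_1$, supported in $L$ proper. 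By weak-$\ast$ continuity of the family of invariant measures with respect to $\lambda_1$ (standard for Feller semigroups on a fixed compact state space), any cluster point as $\lambda_1\to 0^+$ of invariant measures on $\bar L\cap M$ coincides with $\delta_{(a,a^{-1})}\otimes\delta_1$; hence for $\lambda_1/\lambda_0$ small enough $\mu_L$ puts positive mass in $L$ proper and differs from $\mu_D$. Proposition~\ref{thsupport} applied to the restricted PDMP on $\bar L\cap M$, combined with Proposition~\ref{attractequil} (the equilibrium $(a,a^{-1})$ of $F^1$ attracts every orbit in $\bar L\cap M$), identifies $\supp(\mu_L)=\overline{\gamma^+(a,a^{-1})}\times E$. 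The case of $\mu_U$ is symmetric via $(x,y)\leftrightarrow(y,x)$.

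\textbf{Main obstacle.} The most delicate step is Part~2: the perturbation argument linking $\lambda_1=0$ to small $\lambda_1>0$. A quantitative threshold on $\lambda_1/\lambda_0$ would require an explicit lower Lyapunov bound on the expectation of $\log h(Z_t)$ balancing the $e^{-2t}$ contraction of $h$ during $F^0$-excursions against the restoring attraction of $F^1$ toward $(a,a^{-1})$; the qualitative compactness/continuity approach sidesteps this delicate bookkeeping but yields only an unspecified threshold.
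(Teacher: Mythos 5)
Your Part~1 is correct, but follows a genuinely different route. The proof of the first assertion given in the paper is pathwise: Lemma~\ref{le:majo} bounds $0<X_t-Y_t\leq (X_0-Y_0)\exp\bigl(-\int_0^t\alpha(I_s)\,ds\bigr)$ for initial conditions in $L$, and the ergodic theorem for the two-state chain $I$ gives $\frac{1}{t}\int_0^t\alpha(I_s)\,ds\to\frac{\lambda_1-(c\alpha-1)\lambda_0}{\lambda_0+\lambda_1}>0$ almost surely, so $L$ and $U$ are transient. You instead integrate the Lyapunov observable $\log\bigl((x-y)^2+\varepsilon\bigr)$ against a putative invariant probability charging $L\times E$, pass to the $\varepsilon\to 0$ limit, and exploit that the $I$-marginal is forced by the constant jump rates. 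Both proofs rest on the same drift computation; yours replaces the pathwise law of large numbers by a stationarity identity, which is a clean and valid alternative.

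Part~2 has a genuine gap. Your argument hinges on the claim that the restricted PDMP on $\bar L\cap M$ has a \emph{unique} invariant probability at $\lambda_1=0$, namely $\delta_{(a,a^{-1})}\otimes\delta_1$, and then on weak-$\ast$ continuity to deduce that for small $\lambda_1>0$ some invariant measure on $\bar L\cap M$ charges $L$ proper. Neither claim holds. At $\lambda_1=0$ the state $1$ is absorbing for $I$, and on the invariant set $D\subset\bar L$ the flow $\Phi^1$ stays on $D$ and converges to $(b,b)$; so $\delta_{(b,b)}\otimes\delta_1$ is also invariant and the set of invariant probabilities at $\lambda_1=0$ is not a singleton. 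Moreover, the map $\lambda_1\mapsto\{\text{invariant probabilities on }\bar L\cap M\}$ is only \emph{upper} semicontinuous: limit points of invariant measures as $\lambda_1\to 0^+$ must be invariant at $\lambda_1=0$, but this is entirely compatible with every invariant probability on $\bar L\cap M$ being supported on $D\times E$ for every $\lambda_1>0$. Upper semicontinuity can therefore never by itself produce an invariant measure charging $L$ proper. The argument in the paper does not sidestep this issue: after the change of variables $(U,V)=((X+Y)/2,(X-Y)/2)$ it compares the drift of $V$ near $D$ with a simpler bang-bang process $\tilde V$ (Lemma~\ref{lem:compG}) and shows that for $\lambda_1/\lambda_0$ small the quantity $\frac{1}{t}\log\tilde V_t$ has positive asymptotic drift, so $V$ cannot remain in $(0,\varepsilon)$ forever. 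This is exactly the ``delicate bookkeeping'' you flag as the main obstacle, and it is the crux, not an avoidable refinement: the topological and compactness structure you invoke is identical for every $\lambda_1/\lambda_0$, so only the Lyapunov drift can distinguish the two regimes. Finally, the support identification $\supp(\mu_L)=\overline{\gamma^+(a,a^{-1})}\times E$ also relies on this escape argument; Proposition~\ref{attractequil} does not apply to the restriction on $\bar L$ because orbits starting on $D$ never enter the basin of $(a,a^{-1})$, and the accessible set of the restricted system is still $\Gamma$, not $\overline{\gamma^+(a,a^{-1})}$.
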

\begin{rem}
 This dichotomy is essentially due to the fact that the stable manifold $\BRA{(x,x)\ :\ x\in\dR}$ 
 of the unstable critical point $(b,b)$ of $F^1$ is strongly positively invariant by $\Phi^0$ 
 (see Figure~\ref{fi:ovidiu}). Moreover, $\overline{ \gamma^+(a,a^{-1})}\times E$ can be written 
 as the union of the segment $\SBRA{(a,a^{-1}),(\alpha,\alpha)}$ and $\Gamma$ and 
 the unstable manifold (included in $L$) of $(b,b)$ for $\Phi^1$. 
 \end{rem}

The following two sections are dedicated to the proof of Proposition~\ref{prop:ex-rec-tra}. 

\begin{figure}
\begin{center}
 \includegraphics[scale=0.5]{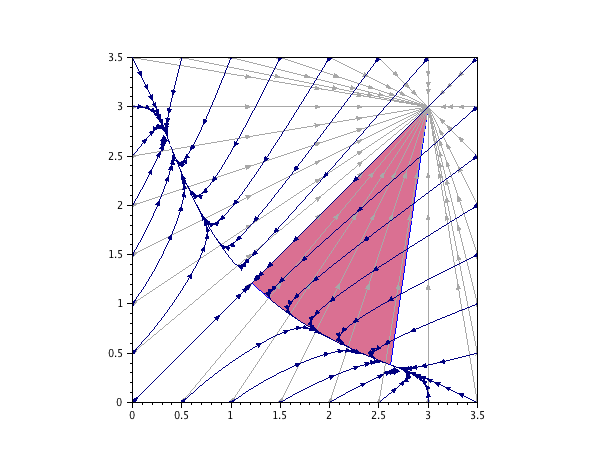}
 \caption{Vector fields $F^0$ (grey lines) and $F^1$ (blue lines) 
 of the example in Section \ref{sec:ovidiu} with $\alpha=3$. The shaded region is 
 $\overline{ \gamma^+(a,a^{-1})}\times E$.}
 \label{fi:ovidiu}
\end{center}
\end{figure}

%%%
\subsubsection{Transience}
The goal of this section is to prove the first part of Proposition~\ref{prop:ex-rec-tra}. 
\begin{lem}\label{le:majo}
Assume that $(X_0,Y_0)\in L$. Then, for any $t>0$,
\[
0< X_t-Y_t\leq (X_0-Y_0)\exp\PAR{-\int _0^t\! \alpha(I_s)\,ds},%
\]
with $\alpha(0)=1$ and $\alpha(1)=1-c\alpha<0$ with $c=(3/8)\sqrt{3}$.
\end{lem}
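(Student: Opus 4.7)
The plan is to write the ODE satisfied by $X_t - Y_t$ on each interval between jumps of $I$, derive a pointwise differential inequality of the form $(d/dt)(X_t-Y_t) \leq -\alpha(I_t)(X_t-Y_t)$, and then integrate it via Gronwall (or equivalently, an integrating factor). The positivity of $X_t-Y_t$ is automatic since $L$ was noted to be strongly positively invariant under both $\Phi^0$ and $\Phi^1$.

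The computation on each flow gives
\[
\frac{d}{dt}(X_t-Y_t) = -(X_t-Y_t),\qquad I_t=0,
\]
and, using $\frac{1}{1+y^2}-\frac{1}{1+x^2}=\frac{(x-y)(x+y)}{(1+x^2)(1+y^2)}$,
\[
\frac{d}{dt}(X_t-Y_t) = -(X_t-Y_t)\left[1-\frac{\alpha(X_t+Y_t)}{(1+X_t^2)(1+Y_t^2)}\right], \qquad I_t=1.
\]
The key step is to bound the quantity $\varphi(x,y)=(x+y)/[(1+x^2)(1+y^2)]$ from above on $(0,\infty)^2$. Setting both partial derivatives to zero yields $x^2+2xy=1=y^2+2xy$, hence $x=y=1/\sqrt{3}$; a direct substitution gives $\varphi(1/\sqrt 3,1/\sqrt 3)=3\sqrt{3}/8=c$. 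A glance at the behaviour of $\varphi$ on the boundary (vanishing as $x,y\to 0^+$ or $\to\infty$) confirms this is a global maximum. Consequently $1-\alpha\varphi(x,y)\geq 1-c\alpha=\alpha(1)$ on the quadrant, and since $X_t-Y_t>0$ in $L$, we obtain in both regimes the unified inequality $(d/dt)(X_t-Y_t)\leq -\alpha(I_t)(X_t-Y_t)$.

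To conclude, I apply this piecewise, between jumps of $I$: on each such interval the scalar function $h(t)=X_t-Y_t$ is $C^1$ and satisfies $h'(t)\leq -\alpha(I_t)h(t)$, so $h(t)\leq h(T_k)\exp(-\int_{T_k}^t\alpha(I_s)ds)$ between two successive jumps $T_k$ and $T_{k+1}$. Since $h$ is continuous across jumps (only $I$ jumps, not $(X,Y)$), concatenating these estimates yields
\[
X_t-Y_t \leq (X_0-Y_0)\exp\!\left(-\int_0^t\alpha(I_s)\,ds\right),
\]
as claimed. The positivity $X_t-Y_t>0$ follows from the strong positive invariance of $L$; alternatively, since $h'(t)=-K_t h(t)$ with $K_t$ locally bounded, $h$ cannot change sign. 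The only real obstacle is the two-variable optimisation of $\varphi$, which is nevertheless elementary.
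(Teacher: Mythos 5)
Your proposal is correct and follows essentially the same route as the paper: you derive the same piecewise ODE for $X_t-Y_t$, bound the same auxiliary function $h(x,y)=(x+y)/[(1+x^2)(1+y^2)]$ by its maximum $c=3\sqrt{3}/8$ at $(1/\sqrt{3},1/\sqrt{3})$, and integrate the resulting differential inequality. The only difference is that you spell out the critical-point computation and the piecewise Gronwall concatenation a bit more explicitly than the paper does.
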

\begin{proof}
If $I_t=0$ then
\[
\frac{d}{dt}(X_t-Y_t)=-(X_t-Y_t).
\]
 On the other hand, if $I_t=1$ then
\begin{align*}
\frac{d}{dt}(X_t-Y_t)&=-(X_t-Y_t)+
\alpha \frac{X_t^2-Y_t^2}{(1+X_t^2)(1+Y_t^2)}\\
&=-\PAR{1-\alpha h(X_t,Y_t)}(X_t-Y_t)
\end{align*}
where the function $h$ is defined on $[0,\infty)^2$ by
\[
h(x,y)=\frac{x+y}{(1+x^2)(1+y^2)}.
\]
The unique critical point of $h$ on $[0,\infty)^2$ is $(1/\sqrt 3,1/\sqrt 3)$ and $h$
reaches its maximum at this point:
\[
c:=\sup_{x,y>0}h(x,y)=\frac{3\sqrt 3}{8}.
\]
As a consequence, for any $t\geq 0$,
\[
\frac{d}{dt}(X_t-Y_t)\leq -\alpha(I_t)(X_t-Y_t)%
\quad\text{where}\quad
\begin{cases}
\alpha(0)=1,\\
\alpha(1)=1-c \alpha.
\end{cases}
\]
Integrating this relation concludes the proof.
\end{proof}

\begin{cor}\label{cor:convpsD}
Assume that $(X_0,Y_0)\in L$. If $\lambda_1>\lambda_0(c\alpha-1)$ then
$(X_t,Y_t)$ converges exponentially fast to $D$ almost surely. More precisely,
\begin{equation}
  \label{eq:convpsD}
\limsup_{t\to \infty}\frac{1}{t}\log \PAR{X_t-Y_t}
\leq -\frac{\lambda_1-(c\alpha-1)\lambda_0}{\lambda_0+\lambda_1}
<0\quad a.s.
\end{equation}
In particular, the process $(X,Y,I)$ admits a unique invariant measure $\mu$. Its support is the set
\[
S=\BRA{(x,x,i)\,:\, x\in [b,\alpha],\ i\in\BRA{0,1}}.
\]
\end{cor}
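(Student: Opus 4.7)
The strategy is to isolate the randomness of the discrete part and plug it into the deterministic bound given by Lemma~\ref{le:majo}, then derive uniqueness of the invariant measure from the fact that every trajectory is asymptotically trapped on the diagonal.

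First I would exploit the fact that the jump rates $\lambda_0$ and $\lambda_1$ do \emph{not} depend on $(X,Y)$, so $(I_t)_{t\geq 0}$ is an autonomous continuous-time Markov chain on $\{0,1\}$ with stationary distribution
\[
\pi_0=\frac{\lambda_1}{\lambda_0+\lambda_1},\qquad \pi_1=\frac{\lambda_0}{\lambda_0+\lambda_1}.
\]
By the ergodic theorem for ergodic finite-state CTMCs, almost surely
\[
\frac{1}{t}\int_0^t\alpha(I_s)\,ds\xrightarrow[t\to\infty]{}\pi_0\,\alpha(0)+\pi_1\,\alpha(1)=\frac{\lambda_1+(1-c\alpha)\lambda_0}{\lambda_0+\lambda_1}=\frac{\lambda_1-(c\alpha-1)\lambda_0}{\lambda_0+\lambda_1}.
\]
Under the hypothesis $\lambda_1>\lambda_0(c\alpha-1)$ this limit is strictly positive. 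Taking the logarithm of the inequality of Lemma~\ref{le:majo},
\[
\log(X_t-Y_t)\leq \log(X_0-Y_0)-\int_0^t\alpha(I_s)\,ds,
\]
dividing by $t$ and letting $t\to\infty$ yields~\eqref{eq:convpsD}. In particular, $X_t-Y_t\to 0$ exponentially fast almost surely.

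Next I would upgrade this one-sided convergence to uniqueness of the invariant measure. By symmetry of $F^0$ and $F^1$ under $(x,y)\mapsto(y,x)$, exactly the same estimate holds when $(X_0,Y_0)\in U$, with $Y_t-X_t\to 0$ almost surely. Since $D$ is strongly positively invariant under both flows, trajectories started on $D$ remain there. Consequently, for \emph{every} starting point in $(0,\infty)^2\times E$, we have $X_t-Y_t\to 0$ a.s. Let $\mu$ be any invariant probability measure of $(X,Y,I)$. Under $\mathbb{P}_\mu$ the process is stationary, so $|X_t-Y_t|$ has the same law as $|X_0-Y_0|$ for all $t\geq 0$; combining this with the a.s. convergence (applied from $\mu$-a.e.\ starting point) forces $X_0=Y_0$ $\mu$-a.s. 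Thus every invariant probability measure is carried by $D\times E$.

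Finally I would conclude by analysing the process restricted to $D\times E$. Parameterising $D$ by $x>0$, the restricted process is a one-dimensional PDMP driven by the two autonomous scalar fields
\[
G^0(x)=-x+\alpha,\qquad G^1(x)=-x+\frac{\alpha}{1+x^2},
\]
with unique attracting equilibria $\alpha$ and $b$ respectively, and with the compact set $[b,\alpha]$ positively invariant under both. Its accessible set is exactly $[b,\alpha]$ and
\[
G^0(x)-G^1(x)=\alpha\,\frac{x^2}{1+x^2}\neq 0\quad\text{on }(b,\alpha),
\]
so the strong bracket condition holds at every interior point of $\Gamma$. Theorem~\ref{th:cvgContinue} then gives existence, uniqueness and exponential ergodicity of an invariant measure $\mu$ for the restricted process, and Proposition~\ref{thsupport}\ref{suppIII} identifies its support with $\Gamma\times E=S$. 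Combined with the previous paragraph this yields uniqueness of the invariant measure on the full state space and identifies its support as $S$.

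The main obstacle is the passage from the pathwise convergence $X_t-Y_t\to 0$ to the statement that every invariant probability is supported on $D\times E$; the stationarity argument handles this cleanly provided one is careful that the a.s.\ convergence holds from \emph{every} deterministic starting point (not just $\mu$-a.e.), which is why we combine the three cases $L$, $U$ and $D$ explicitly.
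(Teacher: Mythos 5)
Your proof of the bound \eqref{eq:convpsD} is exactly the paper's argument: the ergodic theorem for the autonomous two-state chain $(I_t)$ (whose stationary law puts mass $\lambda_0/(\lambda_0+\lambda_1)$ on $1$) combined with Lemma~\ref{le:majo}, and your computation of the limiting average of $\alpha(I_s)$ agrees with the stated rate. The paper's proof stops there, merely noting that $L$ and $U$ are transient; your completion of the uniqueness and support claim --- the symmetry argument for $U$, the stationarity argument forcing every invariant law onto $D\times E$, and the reduction to the one-dimensional PDMP on the diagonal where $G^0-G^1=\alpha x^2/(1+x^2)\neq 0$ gives the strong bracket condition, so that Theorem~\ref{th:cvgContinue} and Proposition~\ref{thsupport} apply --- is correct and simply makes explicit what the paper leaves implicit (it is the same strategy the paper uses for Case~1 of the planar linear flows in Section~\ref{ex:linear}).
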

\begin{proof}
The ergodic theorem for the Markov process ${(I_t)}_{t\geq 0}$
ensures that
\[
\frac{1}{t}\int_0^t \! \alpha(I_s)\,ds
\xrightarrow[t\to \infty]{a.s.} \int \! \alpha(i)d\nu(i)
\]
where the invariant measure $\nu$ of the process ${(I_t)}_{t\geq 0}$ is
the Bernoulli measure with parameter $\lambda_0/(\lambda_0+\lambda_1)$.
The upper bound \eqref{eq:convpsD} is a straightforward consequence
of Lemma~\ref{le:majo}. This ensures that the sets $L$ and $U$ are
transient.
\end{proof}

%%%%%%
\subsubsection{Recurrence}
In this section, we aim at proving the second part of Proposition~\ref{prop:ex-rec-tra}. 
Let us define the following new variables:
\[
U_t=\frac{X_t+Y_t}{2}
\quad\text{and}\quad
V_t=\frac{X_t-Y_t}{2}.
\]
Of course $(U,V,I)$ is still a PDMP.
If
\[
\frac{d}{dt}
\PAR{
\begin{array}{c}
X_t \\
Y_t
\end{array}
}
=F^1(X_t,Y_t)
\quad\text{then}\quad
\frac{d}{dt}
\PAR{
\begin{array}{c}
U_t \\
V_t
\end{array}
}
=G^1(U_t,V_t),
\]
with
\[
G^1(u,v)=
\frac{1}{2}
\begin{pmatrix}
1 & 1 \\
1 & -1
\end{pmatrix}
F^1(u+v,u-v)
=
\begin{pmatrix}
-u+\dfrac{\alpha(1+u^2+v^2)}{(1+(u+v)^2)(1+(u-v)^2)}\\[3ex]
-v+\dfrac{2\alpha uv}{(1+(u+v)^2)(1+(u-v)^2)}
\end{pmatrix}
.
\]
Corollary~\ref{cor:convpsD} ensures that, if $\lambda_1/\lambda_0$ is
large enough, then $V_t$ goes to 0 exponentially fast.
Let us show that this is no longer true if $\lambda_1/\lambda_0$ is small enough.
Let $\varepsilon>0$. Assume that, with positive probability,
$V_t\in (0,\varepsilon)$ for any $t\geq 0$. Then, for any time
$t\geq 0$, $(U_t,V_t)\in [b,\alpha]\times [0,\varepsilon]$. Indeed, one can
show that $U_t\in[b,\alpha]$ for any $t\geq 0$ as soon as it is
true at $t=0$. %$X_t+Y_t\geq 2b$ 
The following lemma states that the vector fields $G^1$ can be compared to 
a vector fields $H^1$ (which is simpler to study).  
\begin{lem}\label{lem:compG}
 Assume that $(u,v)\in [b,\alpha]\times[0,\varepsilon]$. Then there exist
 $u_c\in(b,\alpha)$ and $K,\delta,\gamma,\tilde\gamma >0$ (that do not depend
 on $\varepsilon$) such that $b_\varepsilon=b+K\varepsilon^2$ and
 \[
G^1_1(u,v)\leq H^1_1(u,v)
\quad\text{with}\quad
H^1_1(u,v)=-\delta( u-b_\varepsilon).
\]
and
 \[
G^1_2(u,v)\geq H^1_2(u,v)
\quad\text{with}\quad
H^1_2(u,v)=
\PAR{%
  (\gamma+\tilde \gamma)
  \ind_\BRA{u\leq u_c}
 -\tilde\gamma} v.
\]
\end{lem}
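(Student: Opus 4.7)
My plan is to prove the two inequalities separately, each by expanding $G^1$ around $v=0$ (recall $v=0$ corresponds to the invariant diagonal $D$) and exploiting that the unstable equilibrium $(b,b)$ lies on this diagonal.

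For the first inequality, I would start by noting that on $v=0$ the first component simplifies to
\[
G^1_1(u,0) = -u + \frac{\alpha}{1+u^2} = -\frac{u^3+u-\alpha}{1+u^2}.
\]
Since $b$ is the unique real root of $u^3+u=\alpha$ and since $u\mapsto u^3+u$ has derivative bounded below by $1$, one gets $u^3+u-\alpha\geq u-b$ for $u\geq b$, whence $G^1_1(u,0)\leq -\delta_0(u-b)$ with $\delta_0=1/(1+\alpha^2)$. Next I would estimate the $v$-correction by a direct algebraic manipulation: using $(1+(u+v)^2)(1+(u-v)^2)=(1+u^2+v^2)^2-4u^2v^2$, the difference
\[
\frac{1+u^2+v^2}{(1+(u+v)^2)(1+(u-v)^2)}-\frac{1}{1+u^2}
\]
reduces (after bringing to a common denominator) to $v^2(3u^2-1-v^2)$ divided by a strictly positive bounded denominator. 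On $[b,\alpha]\times[0,\varepsilon]$ with $\varepsilon$ small, this is bounded by $C v^2\leq C\varepsilon^2$. Collecting everything yields $G^1_1(u,v)\leq -\delta_0(u-b)+C'\varepsilon^2=-\delta_0(u-b_\varepsilon)$ with $b_\varepsilon=b+K\varepsilon^2$ and $K=C'/\delta_0$.

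For the second inequality, I factor $v$ out:
\[
G^1_2(u,v) = v\,g(u,v),\qquad g(u,v)=-1+\frac{2\alpha u}{(1+(u+v)^2)(1+(u-v)^2)}.
\]
I would then compute $g(b,0)$: using the identity $b(1+b^2)=\alpha$, one finds $(1+b^2)^2=\alpha^2/b^2$ and consequently $g(b,0)=-1+2b^3/\alpha=1-2b/\alpha$, which is strictly positive since $\alpha>2$ forces $b>1$. By continuity of $g$, there exist $u_c\in(b,\alpha)$ and $\gamma>0$ such that $g(u,v)\geq\gamma$ on $[b,u_c]\times[0,\varepsilon]$ for $\varepsilon$ small. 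On the complementary strip $[u_c,\alpha]\times[0,\varepsilon]$, $g$ is merely continuous and bounded, so it admits a uniform lower bound $-\tilde\gamma$. Multiplying by $v\geq 0$ gives precisely
\[
G^1_2(u,v)\geq \bigl((\gamma+\tilde\gamma)\,\ind_{\{u\leq u_c\}}-\tilde\gamma\bigr)v.
\]

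The only mildly delicate step is the algebraic reduction in the $v$-expansion of $G^1_1$ and the verification that the denominator in $g$ stays bounded away from zero on the compact $[b,\alpha]\times[0,\varepsilon]$, but both are routine calculations. The main conceptual content is the identity $g(b,0)=1-2b/\alpha=\eta_2>0$, which exactly reflects the fact that $(b,b)$ is unstable for $F^1$ in the transverse direction to the diagonal.
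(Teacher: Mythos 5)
Your proof is correct and follows essentially the same route as the paper: compare $G^1$ to its value at $v=0$ with an $O(\varepsilon^2)$ error (your explicit numerator $v^2(3u^2-1-v^2)$ is just a sharper form of the paper's estimate on the denominators), use the root $b$ of $u^3+u-\alpha$ to get the linear bound on $G^1_1$, and factor $v$ out of $G^1_2$, exploiting $g>0$ near $u=b$ (the instability $\eta_2>0$, valid since $\alpha>2$) and a uniform lower bound elsewhere. The only cosmetic deviations are the mean-value bound in place of the paper's factorization $(u-b)(u^2+bu+\alpha/b)$ and continuity/compactness in place of the monotonicity of $g$ to pick $u_c,\gamma,\tilde\gamma$.
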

\begin{proof}
Notice firstly that there exists $c>0$ such that 
\begin{equation}\label{eq:D}
\forall (u,v)\in [b,\alpha]\times[0,\varepsilon],\quad 
\ABS{(1+(u+v)^2)(1+(u-v)^2)-(1+u^2)^2}\leq c\varepsilon^2.
\end{equation}
Thus, using that $u^3+u-\alpha=(u-b)(u^2+bu+\alpha/b)$ and $u>b$ we get that
\begin{align*}
G^1_1(u,v)&\leq -u+\frac{\alpha}{1+u^2}+c\varepsilon^2 \\
&\leq -(u-b)\frac{u^2+bu+\alpha/b}{1+u^2}+c\varepsilon^2\\
&\leq -(u-b)\frac{2b^2+\alpha/b}{1+\alpha^2}+c\varepsilon^2.
\end{align*}
We get the desired upper bound for $G^1_1$ with
\[
\delta=\frac{2b^2+\alpha/b}{1+\alpha^2}, \quad K=c/\delta
\quad \text{and}\quad
b_\varepsilon=b+K\varepsilon^2.
\]
Similarly, Equation~\eqref{eq:D} ensures that
\[
G^1_2(u,v)\geq vg(u)
\quad\text{with}\quad
g(u)=\frac{2\alpha u}{(1+u^2)^2}-1-c\varepsilon^2.
\]
Obviously, if $\varepsilon$ is small enough, $g(b)>0$, $g(\alpha)<0$
and $g$ is decreasing on $[b,\alpha]$. Thus, if $\tilde u$ is the unique zero of $g$
on $(b,\alpha)$, then one can choose
\[
u_c=\frac{\tilde u+b}{2},\quad
\gamma=g(u_c)\quad\text{and}\quad
\tilde \gamma=g(\alpha).
\]
To get a simpler bound in the sequel we can even set
$\tilde \gamma=g(\alpha)\vee 1$.
\end{proof}
Finally, define $H^0_1(u,v)=G^0_1(u,v)=-(u-\alpha)$
and $H^0_2(u,v)=G^0_2(u,v)=-v$ and introduce the PDMP
${(\tilde U,\tilde V, \tilde I)}$ where $\tilde I=I$ is the switching process of
$(U,V,I)$ and $(\tilde U, \tilde V)$ is driven by $H^0$ and $H^1$
instead of $G^0$ and $G^1$. From Lemma~\ref{lem:compG}, we get
that
\[
\forall t\geq 0,\quad U_t \leq \tilde U_t
\quad\text{and}\quad
\tilde V_t\leq V_t
\]
assuming that $(\tilde U_0,\tilde V_0,\tilde I_0)=(U_0,V_0,I_0)$. The last
step is to study briefly the process $(\tilde U,\tilde V,\tilde I)$. From the definition 
of the vector fields that drive $(\tilde U,\tilde V,\tilde I)$, one has  
\[
\frac{d}{dt} \tilde V_t=
\begin{cases}
-\tilde V_t &\text{if }I_t=0,\\
((\gamma+\tilde\gamma)\ind_\BRA{\tilde U_t\leq u_c}-\tilde \gamma)V_t &\text{if }I_t=1. 
\end{cases}
\]
This ensures that 
\begin{equation}\label{eq:comp-V-tilde}
\frac{1}{t}\log \frac{\tilde V_t}{\tilde V_0}
\geq \frac{1}{t}\int_0^t
((\gamma+\tilde \gamma) \ind_\BRA{I_s=1,\tilde U_s\leq u_c}-\tilde \gamma)\,ds
\end{equation}
since $\tilde \gamma\geq 1$. If $\lambda_1/\lambda_0$ is small enough, then 
$(I_s,\tilde U_s)$ spends an arbitrary large amount of time near $(1,b_\varepsilon)$ 
(and it can be assumed that $b_\varepsilon< u_c$ if $\varepsilon$ is small enough). Thus,
the right-hand side of \eqref{eq:comp-V-tilde} converges almost surely
to a positive limit as soon as $\lambda_1/\lambda_0$ is small enough.
As a consequence, $V$ cannot be bounded by $\varepsilon$ forever. The Markov 
property ensures that $(X,Y)$ can reach any  neighborhood of $(a,a^{-1})$ with 
probability 1 and thus, $(a,a^{-1})$ belongs to the support of the invariant measure. 
This concludes the proof of the second part of Proposition~\ref{prop:ex-rec-tra}.

\section{Absolute continuity --- proofs of global criteria}
\label{absolute}
This section is devoted to the proof of 
Theorems~\ref{thm=globalRegJumps} and~\ref{thm=globalRegFixed}. 
The main idea of the proof has already been given before the statements; 
the main difficulty lies in providing estimates that are (locally) uniform in
the starting point $(x,i)$, the region of endpoints $\cV\times E$, and the discrete 
time~$m$ or the continuous time~$t$.

After seeing in Section~\ref{sec:inTermsOfVectorFields} what the 
submersion hypothesis means in terms of vector fields, 
we establish in Section~\ref{sec:paramLI} a parametrized version of the
local inversion lemma. This provides the uniformity in the continuous
part $x$ of the starting point, and enables us to prove 
in Section~\ref{sec:weakGlobal} a weaker version of Theorem~\ref{thm=globalRegJumps}. 
In Section~\ref{sec:gainUniform} we show how to prove the result
in its full strength. Finally, the fixed time result (Theorem~\ref{thm=globalRegFixed})
is proved in Section~\ref{sec:prfGlobalRegFixed}.

\subsection{Submersions, vector fields and pullbacks}
\label{sec:inTermsOfVectorFields}
Before going into the details of the proof, let us see how
one can interpret the submersion hypotheses of our regularity theorems. 

Recall that, for $x$ and $\vtr{i}$ fixed,  we are interested in the map
\begin{align*} 
  \psi : \dR^m &\to \dR^d \\
          \vtr{v}&\mapsto \PhiIV(x).
\end{align*}
To see if this is a submersion at $\vtr{u}$, we compute the partial derivatives
with respect to the $v_i$: these are elements of $T_{x_m}M$, and 
$\psi$ is a submersion if and only if these $m$ vectors span $T_{x_m}M$. 
This is the case if and only if their inverse image by $D\PhiIU$
span $T_{x_0}M$. An easy computation (see also Figure~\ref{fig:dessinGlobal})
shows that these vectors are given by:

  \begin{equation}
    \label{eq:defCtildeIU}
    \Cdiscret(\vtr{i},\vtr{u})  
    = \BRA{ F^{i_0}(x_0), \, \pullbk{1}F^{i_1} (x_0),\, \dots, \, \pullbk{m}F^{i_m} (x_0) },
  \end{equation}
  where $\pullbk{k}$ is the
  composite pullback:
\begin{equation}
  \pullbk{k} = \tpullbk{i_0}{u_1}\circ\cdots \circ \tpullbk{i_{k-1}}{u_k}.
  \label{eq:defPhiKStar}
\end{equation}
Note that $\pullbk{k}$ depends on  $\vtr{i}$ and $\vtr{u}$, 
but we hide this dependence for the sake of readability. 

\begin{figure}
\begin{tikzpicture}[%
  vector/.style={very thick,->},%
  dark blue/.style={blue!60!black},
  dark red/.style={red!60!black},
  dark green/.style={green!60!black},
  point/.style={circle,fill=black,inner sep=0pt,minimum size=3pt},
  scale = 0.7,
  baseline = (current bounding box.center)
  ]
  %\draw[help lines] (-2,-2) grid (8,5);
  \path (0,0) node[coordinate] (x1s) {}
     +(1,1)   node[coordinate] (x1e) {};
  \path (5,2) node[coordinate] (x2s) {}%
     +(-1,-1) node[coordinate] (x2e){};
  \path (4,-2) node[coordinate] (x3s) {}%
     +(3,2) node[coordinate] (x3e){};
  %\coordinate (x3s) at (4,-2) coordinate (x3e) at +(3,1);
  \draw (x1s) .. controls (x1e) and +(-1,0)..  (x2s)%
    % définition des noeuds intermédiaires
    \foreach \p/\q in {0.2/1,0.4/2,0.6/3,0.8/4} {coordinate[pos=\p] (A\q)};
  \draw (x2s) .. controls (x2e) and +(-0.5,1)..  (x3s)%
    % idem pour la deuxième courbe
    \foreach \p/\q in {0.2/1,0.4/2,0.6/3,0.8/4} {coordinate[pos=\p] (B\q)};
    \draw (x3s) .. controls (x3e) and +(-2,-1)..%
       +(5.5,2); % point d'arrivée de la trajectoire
  %Tracé des flèches intermédiaires
  \foreach \q in {1,2,3,4} {
    \draw[vector,red!30] (A\q)-- + (0.2-0.3*\q,-0.70-0.1*\q);
    \draw[vector,green!30] (B\q)-- + (2+0.25*\q,0.7+0.3*\q);
    \draw[vector,green!30] (A\q)-- + (1+0.25*\q,2.5-0.4*\q);
    }
    \draw[vector,blue]  (x1s) --  (x1e) node [above right,dark blue] {$F_1$};
    \draw[vector,red]   (x2s) --  (x2e)   node [below left,dark red] {$F_2$};
    \draw[vector,green] (x3s) --  (x3e)  node [right,dark green] {$F_3$};
    \draw[vector,green] (x2s) -- +(2,0.7) node[above,dark green]
    {};
    \draw[vector,green] (x1s) -- +(1,2)   node[above,dark green]
    {$\pullbk{2} F_3$};
    \draw[vector,red]   (x1s) -- +(0.2,-0.7) node[below,dark red]
    {$\pullbk{1}  F_2$};

  \node[point, label=below:$x_2$] at (x3s) {};
  \node[point, label=left:$x_0$]  at (x1s) {};
  \node[point, label=above:$x_1$] at (x2s) {};
\end{tikzpicture}
\hfill
\begin{minipage}{0.4\linewidth}
  \small
  In this picture $(i_0,i_1,i_2) = (1,2,3)$.  The trajectory starts at $x_0$, 
  and follows
  $F^{i_0} = F^1$ for a time $u_1$. At the first jump, it starts
  following $F^{i_1} = F^2$; we pull this tangent vector, depicted in red,
  back to $x_0$. The next (green)
  tangent vector $F^{i_2} = F^3$ (at $x_2$) has to be pulled back by the two flows.
  If the three tangent vectors we obtain at $x_0$ span $T_{x_0}M$, 
  $\vtr{v}\mapsto \PhiIV(x_0)$ is a submersion.
\end{minipage}
\caption{The global condition}
\label{fig:dessinGlobal}
\end{figure}

\subsection{Parametrized local inversion}
\label{sec:paramLI}
Let us first prove a ``uniform'' local inversion lemma, for functions
of $\vtr{t}$ that depend on a parameter $x$.
\begin{rem}
  Even if $x$ lives in some $\dR^d$, we do not write it in boldface,
  for the sake of coherence with the rest of the paper.
\end{rem}
\begin{lem}
  \label{lem:local_inversion}
  Let $d$ and $m$ be two integers, and
 let $f$  be a $\cC^1$ map from $\dR^m \times \dR^d$ to $\dR^m$,
  \[ f: (\vtr{t},x) \mapsto f(\vtr{t},x) = f_x(\vtr{t}). \]
  For any fixed $x$, $f_x$ maps $\dR^m$ to itself; we denote
  its derivative at $\vtr{t}$ by $(Df_x)_{\vtr{t}}$.
  Suppose that, for some points $x_0$ and $\vtr{t}_0$, $(Df_{x_0})_{\vtr{t}_0}$
  is invertible. Then we can find  a neighborhood $J \subset \dR^d$ of $x_0$,
  an open set $I\subset \dR^m$ and, for all $x \in J$, an open set
  $W_x \subset \dR^m$, such that:
  \[
  \tilde{f}_x:\begin{cases}
    W_x &\to I, \\
    \vtr{t} &\mapsto f_x(\vtr{t})
  \end{cases}
    \]
    is a diffeomorphism. Moreover, for any integer $k\leq m$, and any
    neighborhood $W$ of $\vtr{t}_0$,
    we can choose $I$, $J$ and the $W_x$ so that:
    \begin{enumerate}[label=\roman*)]
    	\item \label{itemA}
     $I$ is a cartesian product $I_1 \times I_2$ where
     $I_1 \subset \dR^k$, $I_2 \subset \dR^{m-k}$ ;
   \item \label{itemB}
     $\forall x\in J, \quad W_x \subset W$.
    \end{enumerate}
\end{lem}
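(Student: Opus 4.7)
The plan is to deduce this parametrized statement from the classical (unparametrized) local inversion theorem applied to a suitably augmented map. Specifically, I would introduce
\[
  F : \dR^m \times \dR^d \to \dR^m \times \dR^d, \qquad F(\vtr{t}, x) = \bigl(f_x(\vtr{t}),\, x \bigr),
\]
which simply decorates $f$ with the identity in the $x$-variable. Its differential at $(\vtr{t}_0, x_0)$ has the block-triangular form
\[
  DF_{(\vtr{t}_0, x_0)} = \begin{pmatrix} (Df_{x_0})_{\vtr{t}_0} & * \\ 0 & I_{\dR^d} \end{pmatrix},
\]
which is invertible by hypothesis. The classical local inversion theorem therefore supplies open neighborhoods $\cN \subset \dR^m\times\dR^d$ of $(\vtr{t}_0,x_0)$ and $\cM \subset \dR^m \times \dR^d$ of $F(\vtr{t}_0,x_0) = (f_{x_0}(\vtr{t}_0), x_0)$ such that $F: \cN \to \cM$ is a $\cC^1$-diffeomorphism.

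Next I would exploit the freedom to shrink $\cM$. Since $\cM$ is an open neighborhood of a point of the form $(y_0, x_0) \in \dR^m\times\dR^d$, and we are free to shrink it, I can assume $\cM = I \times J$ with $I \subset \dR^m$ open around $y_0$ and $J \subset \dR^d$ open around $x_0$. Moreover, to obtain the product structure required by \ref{itemA}, I would shrink $I$ further to a product $I_1\times I_2$ with $I_1\subset\dR^k$ and $I_2\subset\dR^{m-k}$, both open and containing the corresponding projections of $y_0$. This is harmless because any open neighborhood of a point in $\dR^m = \dR^k \times \dR^{m-k}$ contains a product neighborhood.

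Now, for each $x \in J$, the second coordinate of $F$ being the identity forces $F^{-1}(\{y\}\times\{x\})$ to consist of points of the form $(\vtr{t}, x)$ with $f_x(\vtr{t}) = y$. Setting
\[
  W_x = \bigl\{ \vtr{t} \in \dR^m \,:\, (\vtr{t},x) \in \cN \text{ and } f_x(\vtr{t}) \in I \bigr\},
\]
the map $\tilde f_x : W_x \to I,\ \vtr{t}\mapsto f_x(\vtr{t})$ is a $\cC^1$-diffeomorphism (its inverse is the first component of $F^{-1}$ restricted to $I\times\{x\}$). This gives the main conclusion.

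Finally, to enforce \ref{itemB}, observe that $F^{-1}$ is continuous on $\cM$, and $F^{-1}(\cM) = \cN$ can be made arbitrarily small around $(\vtr{t}_0,x_0)$ by shrinking $\cM = (I_1\times I_2)\times J$; in particular, given the prescribed neighborhood $W$ of $\vtr{t}_0$, a sufficient shrinking ensures $\cN \subset W \times J$, whence $W_x \subset W$ for every $x \in J$. There is no real obstacle here: the whole argument is essentially bookkeeping around the block-triangular structure of $DF$, and the only subtle point is the last shrinking step that guarantees the uniform containment $W_x \subset W$ independently of $x$.
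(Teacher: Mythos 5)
Your proposal is correct and follows essentially the same route as the paper: complete $f$ to the map $(\vtr{t},x)\mapsto(f_x(\vtr{t}),x)$, invoke the classical inverse function theorem via the block-triangular differential, shrink the image to a product $(I_1\times I_2)\times J$, and define $W_x$ as the slice of the inverted neighborhood. The only cosmetic difference is that the paper enforces $W_x\subset W$ by intersecting the domain with $W\times\dR^d$ before choosing the product set, while you obtain it by continuity of the inverse; these are equivalent.
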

\begin{proof}
  We ``complete'' the map $f$ by defining:
  \[
  H: \begin{cases}
    \dR^m \times \dR^d &\to \dR^m \times \dR^d \\
    (\vtr{t},x) &\mapsto (f_x(\vtr{t}), x).
  \end{cases}
  \]
  The function $H$ is $\cC_1$, and its derivative can be written in block form:
  \[ DH_{(\vtr{t},x)} = \begin{pmatrix}
    (Df_x)_\vtr{t}  & \star \\
      0        &  I_k
    \end{pmatrix}.
    \]
    Since $(Df_{x_0})_{\vtr{t}_0}$ is invertible, $(DH)_{\vtr{t}_0,x_0}$ is invertible.
    We apply the local inversion theorem to $H$: there exist open sets
    $\cU_0$, $\cV_0$ such that $H$ maps $\cU_0$ to $\cV_0$ diffeomorphically.
    In order to satisfy the properties \ref{itemA} and \ref{itemB}, we restrict
    $H$ two times. First we define $\cU_1 = \cU_0 \cap (W\times \dR^d)$,
    and $\cV_1 = H(\cU_1)$.
    Since $\cV_1$ is open it contains a product set $\cV = I_1 \times I_2\times J$,
    and we let $\cU = H^{-1}(\cV)$. For any $(y,x)\in I \times J$, define
    $g_x(y)$ the first component of $H^{-1}(y,x)$: composing by $H$, we see
    that $f_x(g_x(y)) = y$.

    The set $W_x = \{ \vtr{t}\in \dR^m ; (\vtr{t},x) \in \cU \}$
    is open, and included in $W$. Since $f_x$ maps $W_x$ to $I$, $g_x$
    is its inverse and both are continuous, so $\tilde{f}_x$
    is a diffeomorphism.
\end{proof}

\begin{lem}
  \label{lem:rankToRegularity}
  Let $T$ be a continuous random variable in $\dR^m$, with density $h_T$.
  Let $d\leq m$, and let $\phi$ be a $\cC^1$ map from $\dR^m \times \dR^d$
  to $\dR^d$:
  \[
  \phi: (\vtr{t},x) \mapsto \phi_x(\vtr{t}).
  \]
  Suppose that, for some $x_0, \vtr{t}_0$,
  $(D\phi_{x_0})_{\vtr{t}_0}: \dR^m \to \dR^d$ has full rank $d$.
  Suppose additionally that $h_T$ is bounded below by $c_0>0$ on a
  neighborhood of $\vtr{t}_0$.

  Then there exist a constant $c>0$, a neighborhood $J$ of $x_0$
   and a neighborhood $I_1$ of
   $\phi_{x_0}(\vtr{t}_0)$ such that:
   \begin{equation}
     \label{eq=lemmeCalculDiff}
     \forall x \in J, \quad \prb{ \phi(T,x) \in \cdot } \geq c \leb{d}(\cdot \cap I_1).
   \end{equation}
  In other words, $\phi(T,x)$ has an absolutely continuous part with respect to the
  Lebesgue measure.
\end{lem}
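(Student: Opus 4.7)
The plan is to reduce the result to the parametrized local inversion lemma (Lemma~\ref{lem:local_inversion}) by turning the non-square map $\phi_x : \dR^m \to \dR^d$ into a square map. Since $(D\phi_{x_0})_{\vtr{t}_0}$ has full rank $d$, one can choose (possibly after permuting coordinates) $d$ directions among $\partial_{t_1},\ldots,\partial_{t_m}$ along which this differential is invertible; call these the ``first'' $d$ coordinates. Define the completion
\[
  f_x(\vtr{t}) = \bigl(\phi_x(\vtr{t}),\, t_{d+1},\ldots,t_m\bigr)\in \dR^d\times\dR^{m-d}.
\]
By the block structure of its Jacobian, $(Df_{x_0})_{\vtr{t}_0}$ is invertible.

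Next, I would apply Lemma~\ref{lem:local_inversion} to $f$ with $k=d$, choosing the preliminary neighborhood $W$ of $\vtr{t}_0$ small enough that $h_T \geq c_0$ on $W$. This yields open sets $J \ni x_0$, $I = I_1 \times I_2 \subset \dR^d \times \dR^{m-d}$, and $W_x \subset W$ for $x \in J$, such that $\tilde f_x : W_x \to I$ is a $\cC^1$ diffeomorphism with $I_1$ a neighborhood of $\phi_{x_0}(\vtr{t}_0)$. For any Borel set $B \subset \dR^d$, restricting the integration to $W_x$ and using the lower bound on $h_T$ gives
\[
  \prb{\phi(T,x) \in B} \geq c_0 \int_{W_x} \ind_B(\phi_x(\vtr{t}))\, d\vtr{t}.
\]
Then I would change variables via $\vtr{t} = \tilde f_x^{-1}(y_1,y_2)$, noting that the first component of $f_x$ is exactly $\phi_x$, to obtain
\[
  \prb{\phi(T,x) \in B} \geq c_0 \int_{I_1 \times I_2} \ind_B(y_1)\, |\det D\tilde f_x^{-1}(y_1,y_2)|\, dy_1\, dy_2.
\]

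It remains to bound $|\det D\tilde f_x^{-1}|$ below, uniformly in $x \in J$ and $y \in I$. This is the only technical point, and it will follow from the joint continuity of $(\vtr{t},x) \mapsto Df_x(\vtr{t})$ together with the fact that $\overline{W_x}$ and $\overline{I}$ can be taken compact after shrinking $J$; hence there exists $\kappa > 0$ such that $|\det D\tilde f_x^{-1}(y)| \geq \kappa$ for all $x \in J$, $y \in I$. Combining with the previous inequality gives
\[
  \prb{\phi(T,x) \in B} \geq c_0\, \kappa\, \leb{m-d}(I_2)\, \leb{d}(B \cap I_1),
\]
which is exactly \eqref{eq=lemmeCalculDiff} with $c = c_0\,\kappa\,\leb{m-d}(I_2)$.

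The main (mild) obstacle is the uniform lower bound on the Jacobian; everything else is bookkeeping on the local inversion lemma. In particular, the product structure $I = I_1\times I_2$ provided by item \ref{itemA} of Lemma~\ref{lem:local_inversion} is essential here, since it is what allows the $y_2$-integration to factor out and leaves a clean lower bound in terms of $\leb{d}(B \cap I_1)$.
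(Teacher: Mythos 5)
Your proposal is correct and follows essentially the same route as the paper: the same completion $f_x(\vtr{t})=(\phi_x(\vtr{t}),t_{d+1},\ldots,t_m)$, the same application of Lemma~\ref{lem:local_inversion} with $k=d$, and the same change of variables through $\tilde f_x$. The only cosmetic difference is that the paper bounds $h_T(\vtr{t})\,\ABS{\det (Df_x)_{\vtr{t}}}^{-1}$ below in one stroke by shrinking the neighborhood $W$ (using item \ref{itemB} of the inversion lemma), whereas you bound $h_T\geq c_0$ and the inverse Jacobian separately via a compactness/continuity argument; both yield the same constant $c=c'\leb{m-d}(I_2)$.
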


\begin{proof}
  We know that $(D\phi_{x_0})_{\vtr{t}_0}$ has rank $d$. Without
  loss of generality, we suppose that the first $d$ columns are independent.
  In other words, writing $\vtr{t} = (\vtr{u},\vtr{v}) \in \dR^d \times \dR^{m-d}$,
  we suppose that the derivative of
  $\psi_{x,\vtr{v}}: \vtr{u} \mapsto \phi_{x_0}(\vtr{u},\vtr{v})$
  is invertible in $\vtr{u}_0$ for $\vtr{v} = \vtr{v}_0$.

  Once more, we ``complete'' $\phi$ and define:
  \[
  f_x:
  \begin{cases} \dR^d \times \dR^{m-d} &\to \dR^d \times \dR^{m-d} \\
    (\vtr{u},\vtr{v})                  &\mapsto (\phi_x(\vtr{u},\vtr{v}), \vtr{v}).
  \end{cases}
  \]
   By Lemma \ref{lem:local_inversion} applied with $k=d$, we can find
  $I_1\subset \dR^d$, $I_2\subset \dR^{m-d}$, $J\subset \dR^d$ and
  $(W_x)_{x\in J} \subset \dR^m$ such that $f_x$ maps diffeomorphically
  $W_x$ to $I_1\times I_2$. Call $\tilde{f_x}$ this diffeomorphism.
  By property \ref{itemB} of the lemma, we can ensure that  $W_x$ is
  included in a given neighborhood of $\vtr{t}_0$. Since
  $Df_x = \begin{pmatrix} D\psi_{x,\vtr{v}}  &\star \\ 0 & I \end{pmatrix}$,
  we can choose this neighborhood so that:
  \begin{equation}
    \label{eq=inversible}
    \forall x\in J, \forall \vtr{t} \in W_x, \quad
    h_T(\vtr{t}) \ABS{\det((Df_x)_{\vtr{t}})}^{-1} \geq c' >0.
\end{equation}
  for some strictly positive constant $c'$.

Write the random variable $T$ as a couple $(U,V)$, and let $A$ be a Borel
set included in~$I_1$.
  \begin{align*}
    \prb{\phi(T,x) \in A}
    &\geq \prb{\phi(T,x) \in A, V \in I_2} \\
    &=    \prb{f_x(U,V) \in A \times I_2} \\
    &\geq  \prb{ (U,V) \in \tilde{f_x}^{-1} (A \times I_2)} \\
    &= \int_{\tilde{f}_x^{-1}(A \times I_2)} h_T(\vtr{u},\vtr{v})d\vtr{u}d\vtr{v} \\
    &= \int_{\tilde{f}_x^{-1}(A \times I_2)} h_T(\vtr{u},\vtr{v})
  \ABS{\det( (D\tilde{f}_x)_{\vtr{u},\vtr{v}})}^{-1}\cdot
  \ABS{\det( (D\tilde{f}_x))_{\vtr{u},\vtr{v}}} d\vtr{u}d\vtr{v}.
  \end{align*}
  Since $\tilde{f}_x^{-1} (A\times I_2) \subset W_x$, we may use the
  bound \eqref{eq=inversible}.
  Then we can change variables by defining
  $(\vtr{s},\vtr{v}) = \tilde{f}_x(\vtr{u},\vtr{v})$. We obtain:
  \begin{align*}
    \prb{\phi(T,x) \in A}
    &\geq c' \int_{\tilde{f}_x^{-1}(A \times I_2)}
    \ABS{\det( (D\tilde{f}_x))_{\vtr{u},\vtr{v}}} d\vtr{u}d\vtr{v} \\
    &=    c' \int_{A\times I_2}  d\vtr{s} d \vtr{v} \\
    &\geq c' \leb{d}(A) \leb{m-d}(I_2).
  \end{align*}
  Therefore \eqref{eq=lemmeCalculDiff} holds with $c = c'\leb{m-d}(I_2)$.
\end{proof}

\subsection{A slightly weaker global condition}
\label{sec:weakGlobal}
\begin{prop}[Regularity at jump times --- weak form]
  \label{prop:weakGlobal}
  Let $x_0$ be a point in $M$, and $(\vtr{i},\vtr{u})$ an adapted sequence in $\dT_m$,
  such that $\min_{0\leq i \leq m}u_i>0$. 

  If $\vtr{v}\mapsto \PhiIV(x_0)$ is a submersion at $\vtr{u}$,
   then there exists $\mathcal{U}_0$ a neighborhood of $x_0$,
   $\mathcal{V}_0$ a neighborhood of $\PhiIU(x_0)$  and a constant $c>0$ such that:
   \begin{equation}
     \forall x\in \cU_0, \quad
     \prb[x,i_0]{\tZ_m \in \cdot \times \{i_m\}}
     \geq c \leb{d}\PAR{ \cdot \cap \cV_0},
     \label{eq:weakRegAtJumpTimes}
   \end{equation}
   where $i_0$ and $i_m$ are the first and last elements of $\vtr{i}$. 
 \end{prop}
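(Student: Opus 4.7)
The strategy is to lower bound the probability by restricting to the event that the embedded chain follows the prescribed sequence of indices $\vtr{i}$ exactly, then to recognize the resulting integral as an instance of Lemma~\ref{lem:rankToRegularity} applied to the map $\vtr{v}\mapsto \bphi^{\vtr{i}}_{\vtr{v}}(x)$ with the starting point~$x$ as a parameter.

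First, I would write out, directly from the construction of $\tZ$, the lower bound
\[
\prb[x,i_0]{\tZ_m\in A\times\{i_m\}}
\geq \int_{\Rpn} h(\vtr{v})\, p(x,\vtr{i},\vtr{v})\, \ind_A\!\PAR{\bphi^{\vtr{i}}_{\vtr{v}}(x)}\, d\vtr{v}
\]
for any Borel set $A\subset M$, where $h(\vtr{v})=\lambda^m e^{-\lambda(v_1+\cdots+v_m)}$ is the joint density of the first~$m$ interarrival times and $p$ is the quantity from Definition~\ref{def:adapted}. This follows from conditioning on $(\tilde Y_1,\ldots,\tilde Y_m)=(i_1,\ldots,i_m)$ and applying the Markov property of $\tZ$.

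Since $(\vtr{i},\vtr{u})$ is adapted ($p(x_0,\vtr{i},\vtr{u})>0$), the function $p$ is continuous in its arguments and $\min_k u_k>0$, I would fix a compact neighborhood $W\subset (0,\infty)^m$ of $\vtr{u}$ and a neighborhood $\cU_1$ of $x_0$ on which $p(x,\vtr{i},\vtr{v})\geq \delta_2>0$. On the compact set $W$ the density $h$ is also bounded below by some $c_0>0$, so restricting the integral to $\vtr{v}\in W$ yields
\[
\prb[x,i_0]{\tZ_m\in A\times\{i_m\}}\geq \delta_2\, c_0 \int_W \ind_A\!\PAR{\bphi^{\vtr{i}}_{\vtr{v}}(x)}\, d\vtr{v}
\]
for every $x\in\cU_1$.

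The submersion hypothesis is exactly the rank-$d$ condition needed by Lemma~\ref{lem:rankToRegularity} applied to the map $\phi(\vtr{v},x)=\bphi^{\vtr{i}}_{\vtr{v}}(x)$ at $(\vtr{u},x_0)$. Taking $T'$ to be a random variable with density proportional to $\ind_W h$ (which is uniformly bounded below on a neighborhood of $\vtr{u}$), the lemma produces neighborhoods $\cU_0\subset\cU_1$ of $x_0$ and $\cV_0$ of $\bphi^{\vtr{i}}_{\vtr{u}}(x_0)$, together with $c'>0$, such that
\[
\int_W \ind_A\!\PAR{\bphi^{\vtr{i}}_{\vtr{v}}(x)}\,d\vtr{v} \geq c'\, \leb{d}(A\cap\cV_0)
\]
for all $x\in \cU_0$. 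Combining the three estimates gives~\eqref{eq:weakRegAtJumpTimes} with $c=\delta_2\, c_0\, c'$.

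The main subtlety is the uniformity of the final bound in $x$: one needs the local inversion step to produce a diffeomorphism whose domain contains a common piece of $W$ as $x$ varies in a neighborhood of $x_0$. This is precisely what the parametrized local inversion Lemma~\ref{lem:local_inversion} supplies (by choosing the open sets $W_x$ inside a prescribed neighborhood of $\vtr{u}$), and it is this feature, packaged in Lemma~\ref{lem:rankToRegularity}, that delivers the desired uniform lower bound for all $x\in\cU_0$.
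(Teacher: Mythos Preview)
Your proof is correct and follows essentially the same route as the paper's: localize the interarrival times near $\vtr{u}$, use continuity of $p$ and of the exponential density to extract uniform lower bounds, and then invoke Lemma~\ref{lem:rankToRegularity} (with the parametrized local inversion Lemma~\ref{lem:local_inversion} behind it) to turn the submersion hypothesis into an absolutely continuous lower bound uniform in $x$. The paper phrases the localization as conditioning on $\{\NRM{\vtr{U}-\vtr{u}}\leq\delta_1\}$ rather than restricting the integral to a compact $W$, but this is only a cosmetic difference; one minor slip is that after factoring out $\delta_2 c_0$ the residual integral is $\int_W \ind_A\,d\vtr{v}$, so the auxiliary variable $T'$ should be taken uniform on $W$ rather than with density proportional to $\ind_W h$, though either choice works.
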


 \begin{rem}
   This result is a weaker form of Theorem~\ref{thm=globalRegJumps}:
   \begin{itemize}
     \item the hypothesis is stronger --- the sequence $(\vtr{i}, \vtr{u})$ 
       must be adapted with strictly positive terms;
     \item the conclusion is weaker~--- it lacks uniformity for the discrete
       component, for the starting point and the final point. 
   \end{itemize}
 \end{rem}

  \begin{proof}
Recall $(U_i)_{i \geq 1}$ is the  sequence of interarrival times
of a homogeneous Poisson process.
Let $\tribuDuPoisson$ be the sigma field generated by $(U_i)_{i \geq 1}$.
Set $\vtr{U} = (U_1,\ldots,U_m)$ and $\tilde{\vtr{Y}} = (\tY_0,\ldots, \tY_m).$
By continuity, there exists a neighborhood $\cU_0$ of $x_0$, and numbers $\delta_1,\delta_2 > 0$ such that $p(x,\vtr{i},\vtr{v}) \geq \delta_2$ for all $x \in \cU_0$ and $\vtr{v} \in \dR^m$ such that
$\NRM{\vtr{v}-\vtr{u}} =  \max_{1\leq i \leq m} |v_i - u_i| \leq \delta_1$. 
Therefore
\begin{align*}
  \prb[x,i_0]{\tX_m \in \cdot, \, \tY_m = i_m}
  & \geq \prb{\bphi^{\vtr{i}}_{\vtr{U}}(x) \in \cdot, \;
              \tilde{\vtr{Y}} = \vtr{i}, \; 
	      \NRM{\vtr{U}-\vtr{u}} \leq \delta_1 }          \\
  & =  \esp{\prb{\bphi^{\vtr{i}}_{\vtr{U}}(x) \in \cdot, \; 
                   \tilde{\vtr{Y}} = \vtr{i}, \; 
		   \NRM{\vtr{U}-\vtr{u}} \leq \delta_1 
		 \middle| \tribuDuPoisson}} \\
  & \geq \delta_2 \prb{\bphi^{\vtr{i}}_{\vtr{U}}(x) \in \cdot, \; 
                       \NRM{\vtr{U}-\vtr{u}} \leq \delta_1}  \\
  & = \delta_2 \delta_3 \prb{\bphi^{\vtr{i}}_{\vtr{U}}(x) \in \cdot 
                             \middle| \NRM{\vtr{U}-\vtr{u}} \leq \delta_1 }  \\
  &   = \delta_2 \delta_3 \prb{\bphi^{\vtr{i}}_{\vtr{T}}(x) \in \cdot} 
\end{align*}
where $\delta_3 = \prb{\NRM{\vtr{U}-u} \leq \delta_1} > 0$ and 
 $\vtr{T} = (T_1,\ldots,T_m)$ is a vector of independent random variables such that for each $i$,
the distribution of $T_i$ is given by
\[
  \prb{T_i < t} = \prb{U_i < t \:\big|\: \ABS{U_i - u_i}\leq \delta_1}.
\]
On $[u_i - \delta_1,u_i + \delta_1]$ this is
\(
 \frac{e^{\lambda \delta_1} - e^{-\lambda(t-u_i)}}{e^{\lambda \delta_1} - e^{-\lambda \delta_1}}\)
so $T_i$ has the density
\begin{equation}
  \label{eq=densiteConditionnelle}
  f_{T_i}(t) = \ind_{[u_i - \delta_1,u_i+\delta_1]}(t)
               \frac{\lambda e^{-\lambda (t-u_i)}}
	       { e^{\lambda \delta_1} - e^{-\lambda \delta_1}}
	     \end{equation}
which is continuous at the point $u_i$. 

  Lemma~\ref{lem:rankToRegularity} then applies, 
  yielding \eqref{eq=regAtJumpTimes}, with $\cU_0$ and $\cV_0$ given
  by $J$ and $I_1$ of Lemma~\ref{lem:rankToRegularity}.
  \end{proof}

\subsection{Gaining uniformity}
\label{sec:gainUniform}
\subsubsection{Uniformity at the beginning}
The uniformity on the discrete component follow from two main
ideas:
\begin{itemize}
	\item use the irreducibility and aperiodicity to 
	   move the discrete component, 
	\item use the finite speed given by compactness to show that 
	  this can be done without moving too much. 
\end{itemize}

The vector fields $F^i$ are continuous and the space is compact, so
the speed of the process is bounded by a constant $C_{sp}$. 

\begin{defi}
  [Shrinking] For any open set $\cU$ and any $t>0$, define  $\cU_t$ 
  the shrunk set:
  \begin{equation}
    \label{eq:defShrink}
    \cU_t = \BRA{ x\in A, d(x, \cU^c) >  C_{sp} t }
  \end{equation}
  This set is open, and non empty for $0<t<t(\cU)$. 
  If $x\in\cU_t$, then $\prb[x,i]{X_t \in \cU} = 1$. 
\end{defi}
\begin{lem}[Uniformity at the beginning]
  \label{lem:uniformiteInitiale}
  Let $\cU$ be a non empty open set. There exist $0<\epsilon_1<\epsilon_2$,
  an integer $m_b$, an open set $\cU'\subset \cU$ and a constant~$c$ such that:
  \begin{align*}
  \forall x\in \cU', \forall i,j, &&
  \prb[x,i] {\forall t \in [\epsilon_1,\epsilon_2], Z_t \in \cU\times\{ j\}} &\geq c,   \\
  \forall x\in \cU', \forall i,j &&
  \prb[x,i]{\tZ_{m_b}\in \cU \times\{j\}} &\geq c.
\end{align*}
\end{lem}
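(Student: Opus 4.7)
The plan is to build, for each pair $(i,j) \in E^2$, a favorable event of uniformly positive probability on which the process---starting from $(x,i)$ with $x$ in a small ball $\cU'$ around a chosen reference point $x_0\in\cU$---reaches $\cU\times\{j\}$ after exactly $m_b$ jumps, and then sits there throughout a short interval $[\epsilon_1,\epsilon_2]$. The two main tools are the aperiodicity and irreducibility of $Q(x_0)$, and the bounded speed $C_{sp}$, which lets us keep the continuous component almost stationary by choosing very small inter-jump times.

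First I would fix $x_0 \in \cU$ and $r>0$ with $\overline{B(x_0,2r)}\subset\cU$. Since $Q(x_0,i,i)>0$ for each $i$, the matrix $Q(x_0)$ is aperiodic, and by hypothesis irreducible; therefore there exists an integer $m_b$ such that, for every $(i,j)\in E^2$, one can find a sequence $\vtr{i}^{ij}=(i^{ij}_0,\ldots,i^{ij}_{m_b})$ with $i^{ij}_0=i$, $i^{ij}_{m_b}=j$, and $Q(x_0,i^{ij}_{k-1},i^{ij}_k)>0$ for all $k$ (shorter sequences are padded with phantom jumps $i\to i$ to reach the common length $m_b$). Choose $\eps>0$ small enough that $(m_b+2)\eps\, C_{sp} < r$, and set $\epsilon_1=(m_b+1)\eps$ and $\epsilon_2=(m_b+2)\eps$. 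A direct continuity argument then shows that, for any jump times $\vtr{v}$ sufficiently close to $\vtr{u}^{ij}=(\eps,\ldots,\eps)$ and any $x$ sufficiently close to $x_0$, the entire composite trajectory $\eta_{x,\vtr{i}^{ij},\vtr{v}}$ remains in $B(x_0,2r)\subset\cU$ on $[0,\epsilon_2]$.

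Next I would consider the event $E^{ij}$ on which (a)~the first $m_b$ Poisson jump times satisfy $|T_k-k\eps|\leq\delta_1$ for some small fixed $\delta_1\in(0,\eps/2)$, (b)~$T_{m_b+1}>\epsilon_2$, and (c)~the discrete chain follows the prescription $\tY_k=i^{ij}_k$ for $0\leq k\leq m_b$. On $E^{ij}$ the continuous component stays in $\cU$ throughout $[0,\epsilon_2]$ by the previous step, $Y_t\equiv j$ on $[T_{m_b},\epsilon_2]\supset[\epsilon_1,\epsilon_2]$, and of course $\tZ_{m_b}=(X_{T_{m_b}},j)\in\cU\times\{j\}$. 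The probability of the Poisson timing part of $E^{ij}$ is an explicit positive constant depending only on $\lambda,\eps,\delta_1,m_b$; conditionally on these times, the probability of the discrete prescription equals $\prod_{k=1}^{m_b}Q(X_{T_k},i^{ij}_{k-1},i^{ij}_k)$, which is bounded below by continuity of $Q$, since every $X_{T_k}$ lies in $B(x_0,2r)$ and $Q(x_0,i^{ij}_{k-1},i^{ij}_k)>0$.

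Finally, to obtain uniformity in the starting point, I take $\cU'$ to be a small enough open ball around $x_0$ (contained in $\cU$) so that all the continuity arguments above hold simultaneously for every $x\in\cU'$; taking the minimum over the finite set $E^2$ of the resulting constants yields a single $c>0$ which works for both the continuous and the discrete statement. The mildly delicate point is enforcing the same $m_b$ and the same lower bound for every pair $(i,j)$ and every starting point: this is handled by choosing the paths $\vtr{i}^{ij}$ uniquely at the reference point $x_0$ and transferring to a neighborhood by continuity of $Q$, rather than by attempting a pointwise irreducibility argument whose constants would depend on $x$.
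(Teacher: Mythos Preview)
Your proposal is correct and follows essentially the same approach as the paper: bounded speed to confine $X$ inside $\cU$, plus irreducibility/aperiodicity of $Q$ to steer the discrete component from $i$ to $j$ in a fixed number of steps. The only notable difference is in how $\cU'$ is chosen: the paper introduces the ``shrunk set'' $\cU_t = \{x : d(x,\cU^c) > C_{sp}\,t\}$ and simply takes $\cU' = \cU_{\epsilon_2}$, which automatically guarantees $X_t\in\cU$ for all $t\le\epsilon_2$ without any reference point or continuity argument, whereas you anchor at a specific $x_0$ and use a small ball. Your version is a bit more explicit about the uniform lower bound on the transition probabilities (via continuity of $Q$ near $x_0$), a point the paper's terse proof glosses over; the paper's version, on the other hand, yields a larger and more canonical $\cU'$.
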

\begin{proof}
  Let $\epsilon_2 < t(\cU)$, $\cU' = \cU_{\epsilon_2}$ and
$\epsilon_1 = \epsilon_2/2$.  There is a positive probability
that between $t=0$ and $t = \epsilon_1$, the index jumps  from $i$ to $j$,
and does not jump again before time $t=\epsilon_2$; the fact that
$X_t\in\cU$ is guaranteed by the definition of $\cU'$. 
The second result is similar; if all jump rates are positive, we
can even choose  $m_b = 1$.
\end{proof}

\subsubsection{Uniformity at the end}

\begin{lem}[Gain of discrete uniformity]
  \label{lem:uniformiteFinale}
  If $\cV$ is an open set and $i\in E$, 
  then there exist $c'$, $\cV'$, $t_e$ and $m_e$ such that,
  if $\mu \geq c (\lambda_\cV \times \delta_i)$, 
  \begin{align*}
    \mu P_{t_e}       &\geq c'(\lambda_{\cV'\times E}), \\
    \mu \tP^{m_e} &\geq c'(\lambda_{\cV'\times E}).
  \end{align*}
\end{lem}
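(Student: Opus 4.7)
The key observation is that $\mu$ already has a Lebesgue-absolutely continuous component on $\cV \times \{i\}$, so no submersion condition is needed: each composite flow $\bphi^{\vtr{i}}_{\vtr{u}}$ is a diffeomorphism with Jacobian bounded away from $0$ and $\infty$ on compact regions and for small $\vtr{u}$, hence it preserves AC with respect to $\lambda_M$. The whole task therefore reduces to propagating this density to every discrete fiber $\{j\}$, $j\in E$, while keeping the continuous support inside a possibly smaller open set $\cV'\subset \cV$.

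First I would fix an arbitrary point $x_0\in\cV$. By the pointwise irreducibility in condition~(iii), for each $j\in E$ there exists a path $i=i_0^{(j)},\ldots,i_{n_j}^{(j)}=j$ with $Q(x_0, i_k^{(j)}, i_{k+1}^{(j)})>0$ for all $k$. Set $m_e:=\max_j n_j$ and pad each path to length exactly $m_e$ using phantom self-transitions; this is legitimate because $q_0:=\inf_{x\in M, k\in E} Q(x,k,k)>0$ by compactness and continuity of $\lambda(\cdot,k,k')$. Continuity of $Q$ then yields an open neighborhood $\cW\subset\cV$ of $x_0$ and a constant $q>0$ such that every transition weight appearing along every padded path exceeds~$q$ throughout $\cW$.

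For the discrete bound, pick $\eta>0$ small enough that the shrunk set $\cV'':=\{y\in\cW : d(y,\cW^c)>C_{sp}m_e\eta\}$ is non-empty and that, for $\vtr{u}\in(0,\eta]^{m_e}$, each diffeomorphism $\bphi^{\vtr{i}^{(j)}}_{\vtr{u}}$ is close enough to the identity that $\bphi^{\vtr{i}^{(j)}}_{\vtr{u}}(\cV'')\supset\cV'$ for some fixed open $\cV'\subset\cV''$, uniformly in $\vtr{u}$. If the first $m_e$ interarrival times of the Poisson process all fall in $(0,\eta]$, the skeleton $\tX_0,\ldots,\tX_{m_e}$ stays inside $\cW$ by the finite-speed bound, so every Markov weight along the path is at least $q$. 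Restricting $\prb[x,i]{\tZ_{m_e}\in A\times\{j\}}$ to the event that the discrete chain follows $\vtr{i}^{(j)}$ gives
\[
\prb[x,i]{\tZ_{m_e}\in A\times\{j\}}\ \geq\ q^{m_e}\!\int_{(0,\eta]^{m_e}}\!\!\lambda^{m_e}e^{-\lambda\sum u_k}\ind_A\bigl(\bphi^{\vtr{i}^{(j)}}_{\vtr{u}}(x)\bigr)\,d\vtr{u}.
\]
Integrating against $c\ind_{\cV''}(x)\,dx\leq \mu|_{M\times\{i\}}$, using Fubini, and performing the change of variables $y=\bphi^{\vtr{i}^{(j)}}_{\vtr{u}}(x)$ at fixed $\vtr{u}$ (with uniformly bounded inverse Jacobian) gives $\mu\tilde P^{m_e}(A\times\{j\})\geq c'\lambda_{\cV'}(A)$; minimizing $c'$ over the finitely many $j\in E$ yields the discrete statement. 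The continuous-time bound follows from exactly the same recipe after conditioning on $\{N_{t_e}=m_e\}$ and on the jump times falling in a prescribed small window inside $[0,t_e]$; the free-flow segment of length $t_e-T_{m_e}$ is handled by absorbing the extra diffeomorphism $\Phi^{i_{m_e}^{(j)}}_{t_e-T_{m_e}}$ into the change of variables.

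The main obstacle is that the irreducibility in~(iii) is only pointwise: the supporting graph of $Q(x)$ may vary with~$x$, and no uniform lower bound on the off-diagonal entries is available. This is precisely why $\cV'$ must be taken as a small neighborhood of a single point $x_0$, and why the phantom-jump padding using the uniformly positive diagonal entries $Q(x,k,k)$ is essential, so that the same $m_e$ and $\cV'$ work simultaneously for every $j\in E$.
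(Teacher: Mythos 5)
Your argument is correct and follows essentially the same route as the paper: fix a base point in $\cV$, use pointwise irreducibility plus the strictly positive diagonal of $Q$ (phantom self-transitions) to build, for every target $j\in E$, an admissible path of a common length~$m_e$ with a uniform lower bound~$q$ on the transition weights on a small neighborhood of the base point; condition on an event where the jump times are small so the skeleton stays in that neighborhood and the composite flow is a diffeomorphism with uniformly controlled Jacobian; and push the density through by a change of variables, finally taking the minimum over the finitely many fibers $j$ (which is legitimate because the $\lambda_{\cV'}\otimes\delta_j$ are mutually singular). The only cosmetic difference is that the paper propagates absolute continuity via an explicit divergence bound on $|\!\det D\Phi^i_t|$ (their Lemma on propagation of absolute continuity), while you invoke the same Jacobian control directly in a change of variables — these are the same estimate.
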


The proof will use the following result:
\begin{lem}[Propagation of absolute continuity]
  \label{lem:propagation}
  There is a constant $C_{div}$ that only depends on the set $M$
  and the vector fields $F^i$ such that, for all $\cV$, $i$, 
  \[ 
    \PAR{ \lambda_\cV\times \delta_i }K_t \geq e^{-C_{div} t} \PAR{\lambda_{\cV_t}\times \delta_i},
  \]
  where $\cV_t$ is the shrunk set defined in~\eqref{eq:defShrink} and $K_tf(x,i) = \Phi_t^i f(x,i)$. 
\end{lem}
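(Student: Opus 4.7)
The plan is to unfold the measure $(\lambda_\cV\times\delta_i)K_t$ via the change of variables formula for the flow $\Phi_t^i$, control the Jacobian using the Liouville formula, and verify that the set $\Phi_t^i(\cV)$ contains the shrunk set $\cV_t$. Concretely, for any bounded measurable test function $g:M\times E\to\dR$,
\[
    (\lambda_\cV\times\delta_i)K_t g = \int_\cV g\PAR{\Phi_t^i(x),i}\, dx.
\]
Since $\Phi_t^i$ is a $\cC^1$-diffeomorphism of $\dR^d$ onto its image, the change of variable $y=\Phi_t^i(x)$ gives
\[
    (\lambda_\cV\times\delta_i)K_t g = \int_{\Phi_t^i(\cV)} g(y,i)\, \ABS{\det D\Phi_{-t}^i(y)}\, dy.
\]

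Next, I would invoke the Liouville formula
\[
    \det D\Phi_{-t}^i(y) = \exp\PAR{-\int_0^t \Div F^i\PAR{\Phi_{-s}^i(y)}\,ds},
\]
to bound $|\det D\Phi_{-t}^i(y)|$ from below by $e^{-C_{div}t}$, where $C_{div}:=\sup_{i\in E,\ z\in M^+}\ABS{\Div F^i(z)}$ and $M^+$ is a fixed compact neighborhood of $M$ (say $M^+ = \{z\in\dR^d\,:\,d(z,M)\leq \diam(M)\}$) on which each $|\Div F^i|$ is bounded by compactness and smoothness. I then want the inclusion $\cV_t\subset\Phi_t^i(\cV)$: if $y\in\cV_t$, the speed bound yields $\NRM{\Phi_{-t}^i(y)-y}\leq C_{sp}t < d(y,\cV^c)$, hence $x:=\Phi_{-t}^i(y)\in\cV$ and $y=\Phi_t^i(x)\in\Phi_t^i(\cV)$. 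Putting these three ingredients together gives
\[
    (\lambda_\cV\times\delta_i)K_t g \geq e^{-C_{div}t} \int_{\cV_t} g(y,i)\, dy = e^{-C_{div}t}(\lambda_{\cV_t}\times\delta_i)g,
\]
which is the claimed measure inequality.

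The main obstacle is making sure the constant $C_{div}$ depends only on $M$ and the $F^i$, \emph{not} on $t$, because a priori the backward trajectory $\Phi_{-s}^i(y)$ can leave $M$ (which is only positively invariant). The way out is to observe that $\cV_t$ is empty as soon as $C_{sp}t$ exceeds the inradius of $\cV$, hence \emph{a fortiori} as soon as $C_{sp}t>\diam(M)$; in the trivial case where $\cV_t=\emptyset$ the inequality holds tautologically. For the remaining range of $t$, the backward trajectories $\{\Phi_{-s}^i(y)\,:\,y\in M,\ s\in[0,t]\}$ stay in the fixed compact set $M^+$ chosen above, where $|\Div F^i|$ is uniformly bounded by $C_{div}$, independently of $\cV$, $i$ and $t$.
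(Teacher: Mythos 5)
Your proof is correct and follows the same three-step route as the paper's: change of variables via $\Phi_t^i$, Liouville's formula for the Jacobian, and the inclusion $\cV_t\subset\Phi_t^i(\cV)$ from the speed bound. One small remark: the detour through the enlarged compact $M^+$ and the case distinction on whether $\cV_t=\emptyset$ are unnecessary. Once you have restricted the integral to $y\in\cV_t\subset\Phi_t^i(\cV)$, you can write $y=\Phi_t^i(x)$ with $x\in\cV\subset M$, so $\Phi_{-s}^i(y)=\Phi_{t-s}^i(x)\in M$ for all $s\in[0,t]$ by positive invariance of $M$; the backward trajectory never leaves $M$, and $C_{div}=\sup_{i,z\in M}\ABS{\Div F^i(z)}$ suffices. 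The paper avoids the issue by keeping the base point $x\in\cV$ and tracking $h(t)=\ABS{\det(D\Phi_t^i)_x}$ forward. Also, for the final inequality step you should specify that $g\geq 0$, since that is what the measure inequality requires; this is what the paper does by taking $f$ positive.
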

\begin{proof}
 Since $\Phi_t^i$ is a diffeomorphism 
  from $\dR^d$ to itself, for any positive map $f$ on $\dR^d\times E$ we get by change of variables:
  \[  \int f(\Phi^i_t(x),i) d\lambda_{\cV}(x)  = \int f(x,i) \ABS{D\Phi^i_t}^{-1} d\lambda_{\Phi^i_t(\cV)}(x). \]
  If we let $h(t) = \ABS{D\Phi^i_t}$, one of the classical interpretation of the divergence operator
  (see e.g.~\cite{Lee13}, Proposition~16.33) yields $h'(t) = h(t)\Div F^i (\Phi^i_t(x))$. By compactness, 
  \[ \exists C_{div}, \forall x\in M, \forall i, \quad \ABS{\Div F^i(x)} \leq C_{div}.\]
  Therefore $h(t)^{-1} \geq \exp( - C_{div} t)$. 

  Since by definition of the shrunk set, 
  $\Phi_t^i(\cV) \supset \cV_t$, 
  \[  \int f(\Phi^i_t(x),i) d\lambda_{\cV}(x)  \geq \exp(-C_{div}t) \int f(x,i) d\lambda_{\cV_t}(x), \]
  and Lemma~\ref{lem:propagation} follows. 
\end{proof}

\begin{proof}
  [Proof of Lemma~\ref{lem:uniformiteFinale}]
  Fix a point $x\in\cV$. Since the matrix $Q(x) = Q(x,i,j)$ is irreducible 
  and aperiodic, there exists an integer $m$ such that for all $i$, $j$, 
  there exists  a sequence $\vtr{i}(i,j) = (i_0 = i,i_1(i,j), \ldots , i_{m - 1}(i,j), i_m(i,j) = j)$ that
  satisfies $\prod_{l=1}^m Q(x, i_{l-1}(i,j), i_l(i,j)) > 0$. Without loss
  of generality (since we can always replace $\cV$ by a smaller set) we suppose that
\begin{equation}
\label{eq:boundOnQs}
\forall x\in \cV, \forall i,j, \forall l, \quad Q(x, i_{l-1}(i,j), i_l(i,j)) \geq c_Q > 0. 
\end{equation}
Fix $i$ and $j$. From~\eqref{eq:def2Pt} we can rewrite $P_t$ as:
\begin{equation*}
P_t = \sum_{n \geq 0} \lambda^n e^{-\lambda t}
\int_{\{\vtr{u} \in \dR^n : \sum_{i= 1}^n u_i < t\}}
      \PAR{K_{u_1} Q K_{u_2} Q \cdots  K_{u_n} Q K_{t - \sum u_i}} du_1 \ldots du_n.
\end{equation*}
Therefore:
  \begin{align*}
    \PAR{\lambda_{\cV}\otimes\delta_i} P_t
    &\geq \lambda^m e^{-\lambda t}\int_{\vtr{u}\in\dR^m: \sum u_i < t} 
    (\lambda_{\cV}\otimes \delta_i) K_{u_1} Q \cdots K_{u_m}QK_{t - \sum u_i} du_1\cdots du_n.
  \end{align*}
  By Lemma~\ref{lem:propagation} and the lower bound~\eqref{eq:boundOnQs}, 
  \begin{align*}
    (\lambda_{\cV}\otimes \delta_i) K_{u_1} Q
    &\geq e^{-C_{div} u_1} (\lambda_{\cV_{u_1}}\otimes \delta_i)Q \\
    &\geq c_Q e^{-C_{div} u_1} (\lambda_{\cV_{u_1}}\otimes \delta_{i_1}). 
  \end{align*}
  Repeating these two lower bounds $m$ times yields:
  \begin{align*}
    \PAR{\lambda_{\cV}\otimes\delta_i} P_t
    &\geq \lambda^m e^{-\lambda t} e^{-C_{div} t} c_Q^m \PAR{\int_{\vtr{u}\in\dR^m: \sum u_i < t} 
  du_1\cdots du_n} \lambda_{\cV_t}\otimes \delta_j \\
  &\geq \frac{(\lambda c_Q t)^m}{m!}  e^{-(\lambda+C_{div} ) t} \lambda_{\cV_t}\otimes \delta_j.
  \end{align*}

  Since the measures $\lambda_{\cV_t}\otimes \delta_j$ are mutually singular for different indices $j$, 
  this implies that
  \begin{align*}
    \PAR{\lambda_{\cV}\otimes\delta_i} P_t
    &\geq c(\lambda,t,c_Q,m) \lambda_{\cV_t \times E}.
  \end{align*}
  For $t$ small enough, $\cV_t=\cV'$ is non empty, and the first part of the lemma follows.  

  The  statement for $\tP^m$ is proved similarly, starting from the bound
  \[ \tP^m \geq 
\int_{\{\vtr{u} \in \dR^m : \sum_{i= 1}^m u_i < t\}}
      \PAR{K_{u_1} Q K_{u_2} Q \cdots  K_{u_n} Q } du_1 \ldots du_m,
    \]
    written for a $t$ small enough so that $\cV_t$ is non empty.  
\end{proof}

\subsubsection{Proof of Theorem~\ref{thm=globalRegJumps}}
  The hypothesis gives the existence of $(\vtr{i},\vtr{u})$ such that
  $\vtr{v}\mapsto \PhiIV(x_0)$ is a submersion at $\vtr{u}$, or in 
  other words that the family $\Cdiscret(\vtr{i},\vtr{u})$ defined 
  by \eqref{eq:defCtildeIU} has full rank. 
  If $(\vtr{i},\vtr{u})$ is not adapted to $x_0$,
 by the irreducibility hypothesis, there exists an $m$ and a sequence $(\vtr{i}',\vtr{u}') \in \TT_m$ 
 such that $(\vtr{i}',\vtr{u}')$ is adapted and describes the same trajectory (just add instantaneous transitions where 
 it is needed). 
 The new family $\Cdiscret(\vtr{i}',\vtr{u}')$ contains all vectors from $\Cdiscret(\vtr{i},\vtr{u})$,
 so
 \[
 \rk(\Cdiscret(\vtr{i}',\vtr{u}')) \geq \rk(\Cdiscret(\vtr{i},\vtr{u})).
 \]
 Now, for any $m$, the mapping:
   $(\vtr{i},\vtr{u})  \mapsto \Cdiscret(\vtr{i},\vtr{u})$ from 
   $\TT_m$ to $(\dR^d)^m$ is continuous. Since the rank is a lower semicontinuous function, the mapping
 \begin{align*} 
   \TT_m &\to \dN   \\
   (\vtr{i},\vtr{u})  &\mapsto \rk\PAR{\Cdiscret(\vtr{i},\vtr{u})}
 \end{align*}
 is lower semi-continuous.
 Since being adapted is an open condition, there exists a sequence $(\vtr{i}'',\vtr{u}'') \in \TT_m$ such that
   every component of $\vtr{u}''$ is strictly positive, and
   $\rk\PAR{\Cdiscret(\vtr{i}'',\vtr{u}'')} \geq \rk\PAR{\Cdiscret(\vtr{i}',\vtr{u}')}$. 

 In other words, if the submersion hypothesis of Theorem~\ref{thm=globalRegJumps}
 holds, then the stronger hypothesis of Proposition~\ref{prop:weakGlobal} 
 holds for a (possibly longer) adapted sequence with non-zero terms.

 By Proposition~\ref{prop:weakGlobal}, there exists $\cU$, $\cV$ and~$c$ such that
 \[
   \forall x\in \cU, \quad \prb[x,i_0]{ \tZ_m \in \cdot \times\{i_m\}} \geq c\lambda_\cV(\cdot).
 \]
 Using Lemma~\ref{lem:uniformiteInitiale} to gain uniformity at the beginning, 
 we get the existence of $m'=m_b + m$, $\cU'$ and~$c'$ such that:
 \[
   \forall x\in \cU',\forall i\in E, \quad \prb[x,i]{ \tZ_{m'} \in \cdot \times\{i_m\}} \geq c'\lambda_\cV(\cdot),
 \]
 or in other words:
 \[
   \forall x\in \cU', \forall i\in E,\quad (\delta_{x,i})\tP^{m'} \geq c' \lambda_\cV\otimes\delta_{i_m}.
 \]
 Finally we apply Lemma~\ref{lem:uniformiteFinale} to $i=i_m$ and  the 
 measure $\mu=\delta_{x,i} \tP^{m'}$ to get uniformity at the end:
 for $m'' = m' + m_e$, 
 \[ \forall x\in \cU', \forall i\in E,\quad (\delta_{x,i}) \tP^{m''} \geq c'' \lambda_{\cV' \times E},\]
 which is exactly the conclusion of Theorem~\ref{thm=globalRegJumps}.

\subsection{Absolute continuity at fixed time}%
\label{sec:prfGlobalRegFixed}

The hypothesis of Theorem~\ref{thm=globalRegFixed} is that
$\psi:\vtr{v}\mapsto \Phi_{t-\sum v_i}^i \circ \PhiIV$
has full rank at $\vtr{u}$. 
Reasoning as in Section~\ref{sec:inTermsOfVectorFields}, we can compute 
the derivatives with respect to 
the $v_i$, and write the rank condition at the initial point $x_0$:
the submersion hypothesis holds if and only if the family
  \begin{equation}
  \label{eq:defCIU}
  \begin{split}
    \Ccontinu(\vtr{i},\vtr{u}) =
    \left\{ \PAR{F^{i_0}                 -  \pullbk{m}F^{i_{m}}}(x_0), 
          \PAR{\pullbk{1}F^{i_1}       -  \pullbk{m}F^{i_{m}}}(x_0),  \right. \\
	  \left.\cdots,
	  \PAR{\pullbk{m-1}F^{i_{m-1}} -  \pullbk{m}F^{i_{m}}}(x_0)
	\right\}
  \end{split}
   \end{equation}
   has full rank.

  Let us now turn to the proof of \eqref{eq=regAtFixedTimes},

  By the same continuity arguments as above, we suppose without loss of generality
  that the sequence $(\vtr{i},\vtr{u})$ is adapted to $x_0$ and that all 
  elements of $\vtr{u}$ are positive. 
  Moreover,  there exist $\cU_0$, $\delta_1$ and 
  $\delta_2$ such that, if $x\in\cU_0$ and $\vtr{v}\in\dR^m$ satisfies
  $\NRM{\vtr{v} - \vtr{u}} \leq \delta_1$, then $\sum v_i < t_0$ and
  $p(x, \vtr{v}, \vtr{i} )\geq \delta_2$. 
  
  Define two events
  \begin{align*}
    A &= \text{``the process jumps exactly $m$ times before time $t_0$''}
         \cap  \BRA{ \NRM{\vtr{U} - \vtr{u}} \leq \delta_1 }, \\
    B &= \BRA{ \tilde{\vtr{Y}} = \vtr{i} }
  \end{align*}
  The event $A$ is $\tribuDuPoisson$-measurable.  By definition of $\delta_1$, $\delta_2$, 
  \begin{align*}
    \ind_{A} \prb[x,i_0]{B| \tribuDuPoisson} &\geq \delta_2 \ind_{A} \\
    &\geq \delta_2 \ind_{\BRA{U_{k+1} >t_0}} \ind_{\BRA{\NRM{\vtr{U} - \vtr{u}} \leq \delta_1}} \\
    &\geq \delta_2 e^{-\lambda t_0} \ind_{\BRA{\NRM{\vtr{U} - \vtr{u}} \leq \delta_1}}. 
\end{align*}
On the event $B$, $Z_{t_0} = (\psi(\vtr{U}),i_m)$, so:
  \begin{align*}
    \prb[x,i_0]{Z_{t_0} \in \cdot\times\BRA{i_m}}
    &\geq \prb[x,i_0]{A\cap B \cap \PAR{Z_{t_0} \in \cdot\times\BRA{i_m} }} \\
    &=    \prb[x,i_0]{A \cap B \cap  (\psi(\vtr{U}) \in \cdot)} \\
    &=    \esp{ \prb[x,i_0]{B|\tribuDuPoisson} \ind_A \ind_{\psi( \vtr{U}) \in \cdot }} \\
    &\geq \delta_2e^{-\lambda t_0} \prb{ \BRA{\NRM{\vtr{U} - \vtr{u}}\leq \delta_1} \cap \psi(\vtr{U}) \in \cdot} \\
    &\geq \delta_2\delta_3 e^{-\lambda t_0} \prb{ \psi(\vtr{U}) \in \cdot \big|  \NRM{\vtr{U}-\vtr{u}} \leq \delta_1}.
  \end{align*}
  where $\delta_3 = \prb{\NRM{\vtr{U}-\vtr{u}} \leq \delta_1}$. The reasoning leading to Equation~\eqref{eq=densiteConditionnelle}
  still applies. Thanks to
Lemma~\ref{lem:rankToRegularity}, this implies \eqref{eq=regAtFixedTimes},
but only with $i = i_0$, $j=i_m$ and $\epsilon = 0$.

To prove the general form of \eqref{eq=regAtFixedTimes} with the
additional freedom in the choice of  $i$, $j$  and $t$, we  
first use Lemma~\ref{lem:uniformiteInitiale} to find a
neighborhood $\cU'_0$ of $x_0$, and three constants
$0<\epsilon_1<\epsilon_2$ and $c>0$  such that:
  \[
  \forall x\in \cU'_0,
  \quad
  \prb[x,i]{ \forall t\in[\epsilon_1,\epsilon_2],  Z_t \in \cU_0 \times\{ i_0\}}
  \geq c.
  \]
  Let $t_0' = t_0 - \epsilon_1$ and $\epsilon = \epsilon_2 - \epsilon_1$, so
  that $[t'_0,t'_0+ \epsilon] = [t_0 + \epsilon_1, t_0 + \epsilon_2]$.
  Then, for any $x\in \cU'_0$, and any $t\in[t'_0, t'_0 + \epsilon]$,
  \begin{align*}
    \prb[x,i]{X_t \in \cdot }
    &\geq \esp[x,i]{
       \ind_{\{Z_{t-t_0} \in \cU_0 \times\{ i_1\}\}}
       \prb[Z_{t-t_0}]{ X_{t_0} \in \cdot}
     } \\
     &\geq c'\leb{m}( \cdot \cap \cV_0).
\end{align*}
An application of Lemma~\ref{lem:uniformiteFinale} proves that we can also gain uniformity
at the end; this concludes the proof of Theorem~\ref{thm=globalRegFixed}.

\section{Constructive proofs for the local criteria}
\label{alamain}

%%%%
\subsection{Regularity at jump times}
  To prove the local criteria, we show that they imply the global ones for appropriate 
  (and small) times $u_1, \dots u_m$. We introduce some additional notation 
  for some families of vector fields. 
  \begin{defi}
    The round letters $\famF$, $\famG$, $\famH$ will denote families of vector fields on $M$.
    For a family $\famF$, $\famF_x$ is the corresponding family of tangent vectors at $x$. 

    If $\vtr{i} =(i_0, \ldots,i_m)$ is a sequence of indices
    and $\vtr{u} = \vtr{u}(t) = (u_1(t),\ldots u_m(t))$ is a sequence of ``time'' functions, 
    we denote by $\FIU  = \FIU(t)$ the family of vector fields:
\begin{equation}
  \left\{ F^{i_0}, \pullbk{1}F^{i_1}, \dots \pullbk{m} F^{i_m}\right\}.
  \label{eq:definition_FIU}
\end{equation}
This family depends on $t$ via  the $\pullbk{k}$ (see \eqref{eq:defPhiKStar}).  
  \end{defi}

  We begin by a simple case where there are just two vector fields, 
  $F^1$ and $F^2$, and we want regularity at a jump time, starting
  from (say) $(x,1)$. To simplify matters further, suppose that the 
  dimension $d$ is two. 

  In the simplest case, $F^1(x)$ and $F^2(x)$ span the tangent plane $\dR^2$. 
  Then, for $t$ small enough, these vectors
  ``stay independent'' along the flow of $X_2$: $F^2(x)$ and $(\tpullbk{2}{t} F^1)(x)$ 
  are independent. So the global condition holds for $t$ small enough. 

  \bigskip
    To understand where Lie brackets enter the picture, let us first recall
    that they appear as a Lie derivative that describes how $X$ changes when
    pulled back by the flow of $F^i$ for a small time: at any given point $x$, 
    \[\lim_{t\to 0} \tpullbk{i}{t}X(x) - X(x) -  t[F^i,X](x)  = 0.\]
    Staying at a formal level for the time being, let us write this as:
    \begin{equation}
      \label{eq:lie_derivative_informal}
      \tpullbk{i}{t}X =  X -  t[F^i,X](x)  +o(t).
    \end{equation}
  Suppose now that $F^1$ and $F^2$ are collinear at $x$, but that $F^1(x)$ 
  and $[F^1,F^2](x)$ span $\dR^2$. We have just seen that: 
  \[
  \tpullbk{2}{t}(F^1) = F^1 + t [F^2,F^1] + o(t).
  \]
Let $\vtr{u}(t) = (t,t)$ and $\vtr{i} = (1,2,1)$, and look at $\FIU(t)$. 
  If the ``$o(1)$'' terms behave as expected, 
  \begin{align*}
    \FIU(t) 
    &= ( F^1,  \; \tpullbk{1}{t}(F^2),       \;  \tpullbk{1}{t} \tpullbk{2}{t} (F^1)) \\
    &= \PAR{F^1, \; F^2 + t[F^1,F^2] + o(t), \; F^1 + t[F^2,F^1]+o(t) 
    + t[F^1,F^1] + t^2[F^1,[F^2,F^1]] + o(t^2)} \\
    &= \PAR{F^1, \, F^2 + t[F^1,F^2] + o(t),\,  F^1 + t[F^2,F^1] + o(t)}. 
\end{align*}
By hypothesis,  $\rk\PAR{F^1(x),[F^2,F^1](x)} = 2$. 
  Therefore, for $t$ small enough, the lower-semicontinuity of the rank ensures: 
  \begin{align*}
    \rk\PAR{\FIU(t)}
    &= \rk\PAR{F^1,\; F^2 + t[F^1,F^2] + o(t), \;  t[F^2,F^1] + o(t)}\\
    &= \rk\PAR{F_1,\; F_2 + t[F^1,F^2] + o(t), \; [F^2,F^1] + o(1)}\\
    &\geq \rk( F^1,\; F^2, \; [F^2,F^1]) \\
    &= 2,
  \end{align*}
  and the global condition holds. 

  \bigskip
    To treat the general case, let us first define the $o(1)$ notation.   For
    any smooth vector field $X = \sum_i X^i(x)\frac{\partial}{\partial x_i}$,
    let
    \[ \NRM{X}_k = \max_{\alpha, \ABS{\alpha} \leq k} \max_{x\in M} \max_i \ABS{\partial^\alpha X^i (x)}.\]
    If $X(t)$ is a family of vector fields depending
    on the parameter~$t$, we write $X(t) = o(1)$ if
    \begin{equation}
      \label{eq:defPetitO}
      \forall k\geq 0, \quad \lim_{t\to 0}\NRM{X(t)}_k = 0.
    \end{equation}
  
  The previous case shows two main ingredients in the proof: 
  \begin{itemize}
  	\item to introduce the brackets, we have to alternate between flows, 
	\item the method works because we can express various vectors and Lie brackets
	  as (approximate) linear combinations of vectors in $\FIU(t)$, for good choices
	  of $\vtr{u}$ and $\vtr{i}$. 
  \end{itemize}
  Let us abstract the second ingredient in a definition. 
  \begin{defi}
    Let $\famG = \{G_1, \ldots G_n\}$ be a fixed family of vector fields and
    $\famH(t)$ a family depending on $t$. If there exist continuous functions 
    $\lambda_{ij}:(0,\infty) \to \dR$, and  vector fields $R_i(t)$, such that:
    \[ 
    \forall i\leq n,\quad G_i = \sum \lambda_{ij}(t) H_{j}(t) + R_i(t),
    \]
    and $R_i(t) =  o(1)$, we say that $\famH(t)$ 
    (asymptotically) generates $\famG$. 
  \end{defi}
  \begin{rem}
    We allow the $\lambda_{ij}(t)$ to blow up when $t\to 0$. Because of this,
    even if the $H_j(0)$ are defined, $G_i$ does not necessary lie in their
    linear span. For example, if $H_1(t) = (1,0)$ and $H_2(t) = (0,t)$,
    $\famH(t)$ asymptotically generates any family of constant vector fields in
    $\dR^2$, even if $H_2(0) = (0,0)$ degenerates.  
  \end{rem}
  \begin{lem}
    \label{lem:asymptoticRank}
    If $\famH(t)$ generates $\famG$, then for any point $x$, there is a $t_x$ such that:
    \[ \forall t < t_x, \quad \rk(\famH(t)_x) \geq \rk(\famG_x).\]
  \end{lem}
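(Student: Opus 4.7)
The plan is straightforward once one notices that the potentially diverging coefficients $\lambda_{ij}(t)$ are harmless: they only tell us that certain linear combinations of the $H_j(t)$ lie close to the $G_i$, and ``lying in the span'' does not care about the size of coefficients.

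More precisely, set $r=\rk(\famG_x)$ and pick indices $i_1,\dots,i_r$ such that the vectors $G_{i_1}(x),\dots,G_{i_r}(x)$ form a basis of $\vect(\famG_x)$. By the generating hypothesis, for every $k\in\{1,\dots,r\}$ and every $t>0$,
\[
G_{i_k}(x) \;=\; \sum_{j} \lambda_{i_k j}(t)\, H_j(t)(x) \;+\; R_{i_k}(t)(x),
\]
so the vector $v_k(t) := G_{i_k}(x)-R_{i_k}(t)(x)$ lies in $\vect(\famH(t)_x)$ regardless of how large the $\lambda_{i_k j}(t)$ are. Since $R_{i_k}(t)=o(1)$ in the sense of \eqref{eq:defPetitO}, evaluating at the single point $x$ gives $R_{i_k}(t)(x)\to 0$ as $t\to 0$, hence $v_k(t)\to G_{i_k}(x)$ in $T_xM\simeq\dR^d$.

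Now I invoke the standard lower semicontinuity of the rank: the set of $r$-tuples of vectors in $\dR^d$ which are linearly independent is open (it is cut out by the non-vanishing of at least one $r\times r$ minor). The limit tuple $(G_{i_1}(x),\dots,G_{i_r}(x))$ is linearly independent by our choice of the $i_k$, so there exists $t_x>0$ such that for all $t<t_x$ the tuple $(v_1(t),\dots,v_r(t))$ is still linearly independent. Since each $v_k(t)$ lies in $\vect(\famH(t)_x)$, we deduce $\dim \vect(\famH(t)_x)\geq r=\rk(\famG_x)$, which is exactly the claim.

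There is no real obstacle here; the only thing to watch is that one must extract the independent family \emph{before} taking $t\to 0$, precisely because the $\lambda_{ij}(t)$ may blow up and so neither the generating family $\famH(t)$ nor its image under the map $(\lambda_{ij}(t))$ admits a naive limit. Using the $o(1)$ property only on the remainders $R_i(t)$ (and not on the coefficients) sidesteps this issue cleanly.
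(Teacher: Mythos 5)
Your proof is correct and takes essentially the same approach as the paper: both form the auxiliary combinations $\sum_j \lambda_{ij}(t) H_j(t) = G_i - R_i(t)$, observe they lie in $\vect(\famH(t)_x)$ and converge to $G_i(x)$, and conclude by lower semicontinuity of the rank. The only cosmetic difference is that you extract a maximal independent subfamily of $\famG_x$ first, whereas the paper applies lower semicontinuity directly to the full auxiliary family $\tilde\famH(t)$.
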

  This result will be proved below. The last ingredient in the proof is to 
  introduce different time scales in the alternation between vector fields. 
  Let us write $u(t)\ll v(t)$ if $u(t) = o (v(t))$ when $t$ goes to zero. 
  \begin{lem}[Towers of Hanoï]
    \label{lem:hanoi}
    Suppose that, for some $\vtr{i} = (i_0, \ldots i_m)$ 
    and $\vtr{u}(t) = (u_1(t), \ldots u_m(t))$,
    $\FIU(t)$ generates $\famG$. Suppose that for all $j$, $u_j(t)$ takes positive 
    values and $\lim_{t\to 0} u_j(t) = 0$. 
    Choose two functions $u(t)$ and $v(t)$  such that $v(t) \ll u(t)$,
    $u(t)  \ll 1 $ and  $u_j(t) \ll  u(t)$, for all $j$. 
  Define $\tilde{\vtr{u}}(t)$ and $\tilde{\vtr{i}}$ by concatenation (see Figure~\ref{fig:hanoi}):
     \begin{equation}
       \label{eq:defiTilde}
     \begin{aligned}
       \tilde{\vtr{u}}(t) &= (u_1(t), u_2(t), \ldots, u_m(t), \; v(t),u(t), \; u_1(t), \ldots , u_m(t)); \\
       \tilde{\vtr{i}}    &= (i_0, i_1, \ldots i_m, i, i_0, i_1, \ldots i_m). 
     \end{aligned}
     \end{equation}
    
    Then $\FIUTilde(t)$ generates $\famG \cup \{F^i\} \cup \{ [F^i,G], G\in \famG\}$.
  \end{lem}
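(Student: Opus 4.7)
I would split $\FIUTilde(t)$ into three blocks. The first $m+1$ vectors are exactly those of $\FIU(t)$, namely $G_j(t) := \pullbk{j} F^{i_j}$ for $j=0,\dots,m$, so $\famG$ is generated automatically. Setting $\Psi(t) := \pullbk{m}\circ \tpullbk{i_m}{v(t)}$, the next vector is $V(t) = \Psi(t) F^i$, and the remaining $m+1$ vectors are $W_j(t) = \Psi(t)\,\tpullbk{i}{u(t)}\,G_j(t)$ for $j=0,\dots,m$. The task reduces to showing that these new vectors generate $F^i$ and every $[F^i, G]$ for $G\in\famG$.

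\textbf{Lie expansions.} The two tools are (i) that $\Psi(t)$ is close to the identity, as each of its partial flows runs for a time tending to zero, which yields an estimate $\NRM{\Psi(t) X - X}_k \leq C\bigl(v(t) + \textstyle\sum_k u_k(t)\bigr)\NRM{X}_{k+1}$ for every smooth $X$; and (ii) the Lie-derivative expansion $\tpullbk{i}{u} Y = Y + u[F^i, Y] + O(u^2)$, uniformly for $Y$ with bounded derivatives. Tool (i) applied to $X = F^i$ gives $V(t) = F^i + o(1)$, so $F^i$ is asymptotically generated. Combining (i) and (ii),
\begin{equation*}
W_j(t) - G_j(t) = u(t)\,[F^i, G_j(t)] + O\bigl(v(t) + \textstyle\sum_k u_k(t)\bigr) + O(u(t)^2),
\end{equation*}
and dividing by $u(t)$, the hypotheses $v/u, u_k/u \to 0$ and $u\to 0$ imply
\begin{equation*}
\frac{W_j(t) - G_j(t)}{u(t)} = [F^i, G_j(t)] + o(1).
\end{equation*}
For any $G\in\famG$, writing $G = \sum_j \mu_j(t) G_j(t) + R^G(t)$ with $R^G(t) = o(1)$ (from the hypothesis that $\FIU(t)$ generates $\famG$) and bracketing with $F^i$ then expresses $[F^i, G]$ as an explicit $\dR$-linear combination of the $G_j(t)$'s and $W_j(t)$'s, with coefficients of order $\mu_j(t)/u(t)$, up to a residual.

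\textbf{Main obstacle.} That residual equals $\sum_j \mu_j(t)\,r_j(t) + [F^i, R^G(t)]$ with $r_j(t) = O\bigl(u(t) + (v(t)+\sum_k u_k(t))/u(t)\bigr)$. Since the coefficients $\mu_j(t)$ coming from the hypothesis may blow up as $t\to 0$, the fact that $r_j(t) = o(1)$ is not sufficient on its own. The resolution exploits the flexibility the lemma grants in choosing $u(t)$ and $v(t)$: the orderings $v\ll u$, $u_k\ll u\ll 1$ only constrain their relative rates, leaving room to shrink $u(t)$ and $v(t)$ further. If $M(t)$ is any continuous upper bound on $\max_j |\mu_j(t)|$, one picks $v(t)$ and $u(t)$ small enough that $u(t)M(t)\to 0$ and $M(t)\bigl(v(t)+\sum_k u_k(t)\bigr)/u(t)\to 0$; this is always possible by an explicit choice once $\sum_k u_k(t)$ and $M(t)$ are known. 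With such a choice, $\mu_j r_j = o(1)$, and the formula above expresses $[F^i, G]$ modulo a genuine $o(1)$ residual, completing the proof.
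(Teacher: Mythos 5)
Your decomposition of $\FIUTilde(t)$ into the top block $(G_j)_{j=0}^m$, the middle vector $V(t)$, and the bottom block $(W_j)_{j=0}^m$ matches the paper's, and the treatment of $F^i$ via $V(t)=F^i+o(1)$ is fine. The gap is in the resolution of what you call the main obstacle, and the obstacle itself is, in fact, an artifact of the way you set up the estimate.

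Your proposed fix does not work. You want $u(t)M(t)\to 0$ while keeping the lemma's constraint $u_j(t)\ll u(t)$. These two together force $u_j(t)M(t)\to 0$ for every $j$, and nothing in the hypothesis guarantees that: $M(t)$ is an upper bound on $|\mu_j(t)|$, and the definition of asymptotic generation explicitly \emph{allows} the $\mu_j$ to blow up at an arbitrary rate. So the claim ``this is always possible by an explicit choice once $\sum_k u_k(t)$ and $M(t)$ are known'' is false: the constraint $u_j\ll u$ leaves no room to shrink $u$ when $M$ grows faster than $1/\max_j u_j$. Moreover the lemma as stated allows \emph{any} $u,v$ satisfying the three orderings, so even if a special choice worked you would be proving a weaker statement than the one needed for the inductive use in Theorem~\ref{th:hormander}.

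The paper sidesteps the issue entirely by never approximating $W_j-G_j$. Write $\Xi=\pullbk{m}\tpullbk{i_m}{v}\tpullbk{i}{u}$ and apply $\Xi$ to the \emph{exact} generation identity $G=\sum_j\mu_j(t)G_j(t)+R^G(t)$, then subtract and divide by $u$: \[ \frac{\Xi G-G}{u}=\sum_j\frac{\mu_j(t)}{u(t)}\bigl(W_j(t)-G_j(t)\bigr)+\frac{\Xi R^G-R^G}{u}. \] The sum on the right is left alone: it is already an exact $\dR$-linear combination of elements of $\FIUTilde(t)$, with coefficients $\pm\mu_j/u$ that are permitted to blow up. Only the two ``edge'' terms are expanded with the Lie derivative, and both behave: $(\Xi G-G)/u=[F^i,G]+o(1)$ because $G$ is a \emph{fixed} smooth field, and $(\Xi R^G-R^G)/u=[F^i,R^G]+o(1)=o(1)$ because $R^G=o(1)$. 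No product $\mu_j\cdot r_j$ ever appears. Equivalently, your residual $\sum_j\mu_j r_j$ does tend to zero, but term-by-term bounds cannot see it: the cancellation comes from $\sum_j\mu_j G_j=G-R^G$, giving $\sum_j\mu_j r_j=\bigl((\Xi G-G)/u-[F^i,G]\bigr)-\bigl((\Xi R^G-R^G)/u-[F^i,R^G]\bigr)=o(1)$. As written, your proof misses this algebraic identity and replaces it with a choice of $u,v$ that is not available; that is where the argument fails.
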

  The name comes from the fact that, in analogy with the towers of Hanoï, in order to gain brackets by $F^i$ (the lower disk),
  we have to move according to $\vtr{i},\vtr{u}$ (move all the upper disks), 
  then move the lower disk, then move the upper disks once again. 

  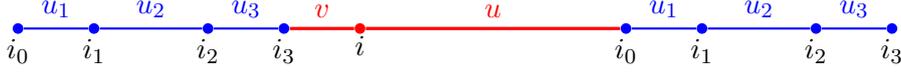
\begin{figure}
    \centering
  \begin{tikzpicture}
    \path[every node/.style={circle,fill=blue,minimum size=4pt,inner sep = 0pt}]
          (0,0) node[label=below:$i_0$] (i0) {}
          (1,0) node[label=below:$i_1$] (i1) {}
          (2.5,0) node[label=below:$i_2$] (i2) {}
          (3.5,0) node[label=below:$i_3$] (i3) {}
          (4.5,0) node[label=below:$i$,fill=red] (i) {}
          (8,0) node[label=below:$i_0$] (i0') {}
          (9,0) node[label=below:$i_1$] (i1') {}
	  (10.5,0) node[label=below:$i_2$] (i2') {}
          (11.5,0) node[label=below:$i_3$] (i3') {};
	  \draw[blue,thick] (i0)  -- node[midway,above]{$u_1$}
	                    (i1)  -- node[midway,above]{$u_2$}
			    (i2)  -- node[midway,above]{$u_3$} (i3)
                            (i0') -- node[midway,above]{$u_1$}
			    (i1') -- node[midway,above]{$u_2$}
			    (i2') -- node[midway,above]{$u_3$} (i3');
	   \draw[red,very thick] (i3) -- node[midway,above]{$v$}
	                          (i)  -- node[midway,above]{$u$} (i0');
  \end{tikzpicture}
    \caption{The Hanoï construction}
    \small
    The sequence $(\tilde{\vtr{i}} ; \tilde{\vtr{u}})$ is obtained by 
    inserting the sequence $(i ; (u,v))$, depicted in red, in-between two copies of the original sequence $(\vtr{i} ; \vtr{u})$. 
    The ``middle'' time $u$ is much larger than all the other times. 
    \label{fig:hanoi}
  \end{figure}

  Once these lemmas are known, Theorem \ref{th:hormander} follows quite easily. 
\begin{proof}
  [Proof of Theorem~\ref{th:hormander}] 
  Let us reason by induction. Starting from the empty family, we can
  construct thanks to Lemma~\ref{lem:hanoi} a $\vtr{u}(t)$ and an $\vtr{i}$
  such that $\FIU(t)$ generates any number of $F^i$ and iterated brackets
  $[F^{i_1}, [F^{i_2},\ldots F^{i_k}]]$.
  If $\dR^d$ is generated by these brackets, Lemma~\ref{lem:asymptoticRank}
  shows that $\FIU(t)$ has full rank for some $t_x$.  Therefore, the global
  condition is satisfied for the choice of times $(u_1(t_x),u_2(t_x), \ldots
  u_m(t_x))$. 
\end{proof}

Let us turn to the proofs of the lemmas. 
\begin{proof}[Proof of Lemma \ref{lem:asymptoticRank}]
  Define a family $\tilde{\famH}(t)$ by $\tilde{H}_i = \sum \lambda_{ij}(t) H_j(t) = G_i - R_i(t)$. 
  At every point $x$, every vector in $\tilde{\famH}_x$ is a combination of vectors in $\famH_x$. Therefore
  \begin{align*}
    \rk(\famH(t)) &\geq \rk(\tilde{\famH}(t)) \\
    &\geq \rk(\famG),
  \end{align*}
  where the second line follows from the lower semi-continuity of the rank, 
  since for all $x$, $\NRM{R_i(t)(x)}$ converges to $0$. 
\end{proof}

To prove Lemma~\ref{lem:hanoi} we need to write down properly the fundamental relation~%
\eqref{eq:lie_derivative_informal} and see how ``small'' vector fields are affected by pullbacks
and Lie brackets.
\begin{lem}
      \label{lem:contPB}
      For any $k$, there exists a constant $C_k$ that only depends
      on the fields $F^i$, such that the following holds. 
      For any vector field $X$, any $i$ and any $t\in [0,1]$, 
      \begin{align}
	\label{eq:contPB1}
	\NRM{ \tpullbk{i}{t} X}_k &\leq C_k \NRM{X}_k, \\
	\label{eq:contPB2}
	\NRM{ \tpullbk{i}{t} X - X }_k &\leq C_k t \NRM{X}_{k+1}, \\
	\label{eq:contPB3}
	\NRM{\tpullbk{i}{t}X - X - t[F^i,X]}_k  &\leq C_k t^2 \NRM{X}_{k+2}.
      \end{align}
      In particular if $X$ is smooth, the formal equation \eqref{eq:lie_derivative_informal}
      is rigorous. 

      Moreover, if $X(t) = o(1)$, then $\tpullbk{i}{t}X(t) = o(1)$ and $[F^i,X(t)] = o(1)$. 
    \end{lem}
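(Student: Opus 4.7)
The plan is to reduce everything to the fundamental identity
\[
\frac{d}{dt}\tpullbk{i}{t}X = \tpullbk{i}{t}[F^i,X],
\]
which follows from the definition of the Lie derivative $\mathcal{L}_{F^i}X = [F^i,X]$ together with the semigroup property $\tpullbk{i}{t+s} = \tpullbk{i}{t}\circ\tpullbk{i}{s}$ of the pullback. Given this identity, the three inequalities form a telescoping hierarchy: \eqref{eq:contPB2} is obtained by integrating \eqref{eq:contPB1} once, and \eqref{eq:contPB3} by integrating \eqref{eq:contPB2} (applied to $[F^i,X]$ in place of $X$) a second time. The main task is therefore to establish \eqref{eq:contPB1}.

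For \eqref{eq:contPB1}, I would write
\[
(\tpullbk{i}{t}X)(x) = \PAR{D\Phi^i_t(x)}^{-1} X\PAR{\Phi^i_t(x)}
\]
and expand any partial derivative $\partial^\alpha(\tpullbk{i}{t}X)(x)$ using the multivariate chain (Faà di Bruno) and Leibniz rules. This expresses $\partial^\alpha(\tpullbk{i}{t}X)$ as a polynomial whose arguments are partial derivatives of $X$ of order $\leq\ABS{\alpha}$ evaluated at $\Phi^i_t(x)$, with coefficients built from partial derivatives of $\Phi^i_t$ and of $(D\Phi^i_t)^{-1}$ of order $\leq\ABS{\alpha}$. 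Smooth dependence of the ODE $\dot x = F^i(x)$ on initial conditions (the variational equations are linear and driven by derivatives of $F^i$), together with compactness of $M$ and the restriction $t\in[0,1]$, gives a uniform bound on all these coefficients, hence a constant $C_k$ depending only on $F^i$ and $k$ for which \eqref{eq:contPB1} holds.

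Integrating the fundamental identity from $0$ to $t$ then yields
\[
\tpullbk{i}{t}X - X = \int_0^t \tpullbk{i}{s}[F^i,X]\,ds.
\]
Taking the $\NRM{\cdot}_k$ norm, bounding the integrand by \eqref{eq:contPB1}, and using the elementary Leibniz estimate $\NRM{[F^i,X]}_k \leq C'_k\NRM{X}_{k+1}$ (which holds because the bracket involves one derivative of $X$ and $F^i$ is smooth on the compact $M$) gives \eqref{eq:contPB2}. Iterating,
\[
\tpullbk{i}{t}X - X - t[F^i,X] = \int_0^t \PAR{\tpullbk{i}{s}[F^i,X] - [F^i,X]}\,ds,
\]
and bounding the integrand via \eqref{eq:contPB2} applied to $[F^i,X]$ by $C_k s \NRM{[F^i,X]}_{k+1} \leq C''_k s\NRM{X}_{k+2}$, integration in $s$ produces the $t^2$ factor of \eqref{eq:contPB3}. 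The formal identity \eqref{eq:lie_derivative_informal} then follows rigorously from \eqref{eq:contPB3} for any smooth $X$. Finally, the $o(1)$ claim is immediate: if $\NRM{X(t)}_\ell\to 0$ for every $\ell$, then \eqref{eq:contPB1} gives $\NRM{\tpullbk{i}{t}X(t)}_k \leq C_k\NRM{X(t)}_k \to 0$ for each $k$, and similarly $\NRM{[F^i,X(t)]}_k \leq C'_k\NRM{X(t)}_{k+1} \to 0$.

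The only technically delicate step is \eqref{eq:contPB1}: one must track spatial derivatives of $\Phi^i_t$ of arbitrary order through the successive variational equations. This is standard smooth ODE theory, but it is where all the bookkeeping lives; once it is done, the remaining estimates are essentially formal manipulations with integrals along the flow.
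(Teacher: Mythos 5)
Your proposal is correct and proves the same estimates, but it organizes the time-dependence differently from the paper. For \eqref{eq:contPB1} both arguments are identical: express $\tpullbk{i}{t}X$ in coordinates as $\sum_{i,j}a^i_j(x,t)X^j(\Phi^i_t(x))\partial_{x_i}$, apply the Leibniz/chain rule to $\partial^\alpha$, and bound the coefficients uniformly on the compact $M\times[0,1]$. The divergence comes afterwards. The paper proves \eqref{eq:contPB2} by writing, coordinate by coordinate, $a^i_j(x,t)X^j(\Phi(t,x)) - X^j(x) = \int_0^t \partial_s\bigl(a^i_j(x,s)X^j(\Phi(s,x))\bigr)\,ds$ and then applying $\partial^\alpha$ to the integrand; the proof of \eqref{eq:contPB3} is declared ``similar'', and the identification of the first-order coefficient with the Lie bracket is cited as standard. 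You instead invoke the intrinsic identity $\tfrac{d}{dt}\tpullbk{i}{t}X = \tpullbk{i}{t}[F^i,X]$ (a consequence of the Lie-derivative formula and the semigroup property of the pullback), integrate it once to get $\tpullbk{i}{t}X - X = \int_0^t\tpullbk{i}{s}[F^i,X]\,ds$, and then apply \eqref{eq:contPB1} plus the elementary Leibniz estimate $\NRM{[F^i,X]}_k\leq C'_k\NRM{X}_{k+1}$ under the integral; iterating the same step on $[F^i,X]$ produces \eqref{eq:contPB3}. This makes the telescoping structure $\eqref{eq:contPB1}\Rightarrow\eqref{eq:contPB2}\Rightarrow\eqref{eq:contPB3}$ fully explicit and shows that the Lie bracket appears for structural reasons rather than as the (unverified in the paper) outcome of a coordinate computation. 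The trade-off is that you have to have the intrinsic pullback--Lie-derivative identity in hand, whereas the paper's route is self-contained coordinate calculus. Both are complete; yours is a bit cleaner where the paper is terse (the ``similar'' for \eqref{eq:contPB3}), and the paper is a bit more elementary where you lean on the flow-derivative formula. Your treatment of the final $o(1)$ statement matches the paper's.
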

\begin{proof}
  Let $F$ be one of the $F^i$, and $(t,x) \mapsto \Phi_t(x)$ be its flow. 
  Recall that the pullback $\Phi_t^\star$ acts on vector fields by:
  \[ (\Phi_t^\star X)(x) = (D\Phi_t)_x^{-1}  X(\Phi(t,x)). \]
  Since the flow is smooth, this can be written in coordinates as
\begin{equation}
\label{eq:pb_in_coordinates}
  (\Phi_t^\star X)(x) = \sum_{i,j} a^i_j(x,t)X^j(\Phi(t,x)) \frac{\partial}{\partial x_i},
\end{equation}
  where the $a^i_j$ are smooth functions of $x$ and $t$ that only depend on the 
  vector field $F$.  For any $i$ and any multiindex $\alpha$, we may apply
  $\partial^\alpha$ to the $i$\textsuperscript{th} coordinate; the resulting expression only 
  involves the derivatives of $X^j$ up to order $k$. This implies the control~\eqref{eq:contPB1}.
  
  To prove \eqref{eq:contPB2}, first apply Taylor's formula
  in the $t$ variable to each coordinate in \eqref{eq:pb_in_coordinates}, at order $1$:
  \begin{align*}
    &a^i_j(x,t)X^j(\Phi(t,x)) - X^j(x)\\
    &\qquad = \int_0^t \partial_s (a^i_j(x,s)X^j(\Phi(s,x))) ds \\
    &\qquad = \int_0^t (\partial_s a^i_j)(x,s)X^j(\Phi(s,x)) 
    + a^i_j(x,s) \left( \nabla X^j(\Phi(s,x)) \cdot  F(\Phi(s,x))\right) ds.
  \end{align*}
  This expression involves the $X^j$ and their first-order derivatives. Once more we
  may apply $\partial^\alpha$ to both sides to deduce \eqref{eq:contPB2}. The proof of%
  ~\eqref{eq:contPB3} is similar; the fact that the first order term is given by 
  the Lie bracket is standard.
  This clearly implies
  \eqref{eq:lie_derivative_informal} if $X$ is smooth. 

  Finally suppose $X(t)$ satisfies $X(t) = o(1)$. For any $k$, 
  $\NRM{ \tpullbk{i}{t} X(t)}_k \leq C_k \NRM{X(t)}_k$ by~\eqref{eq:contPB1}. 
  When $t$ goes to zero, $\NRM{X(t)}_k$ converges to zero, and so does
  $\NRM{\tpullbk{i}{t} X(t)}_k$: in other words,  $\tpullbk{i}{t}X(t) = o(1)$. 
  The fact that the same happens for Lie brackets follows from their expression
   in coordinates. 
\end{proof}

\begin{proof}[Proof of Lemma~\ref{lem:hanoi}]
  Recalling the composite pullback notation $\pullbk{k}$ from
  \eqref{eq:defPhiKStar}, 
  let us define, for $0\leq j \leq m$,  $\tilde{F}_j = \pullbk{j} F^{i_j}$
  so that $\FIU(t) = (\tilde{F}_0, \tilde{F}_1, \ldots \tilde{F}_m)$. 
  All these quantities, as well as $u$, depend on $t$, 
  but we drop this dependence in the notation. 
  The ``Hanoï'' construction yields:
  \[
  \begin{matrix}
    \FIUTilde = 
    \big( & F^{i_0}, & \pullbk{1} F^{i_1},& \ldots ,&\pullbk{m} F^{m}, \\[1.3ex]
    & & \pullbk{m}\tpullbk{i_m}{v} F^i, && \\[1.3ex]
    & \pullbk{m}\tpullbk{i_m}{v}\tpullbk{i}{u} F^{i_0}, 
    & \pullbk{m}\tpullbk{i_m}{v}\tpullbk{i}{u}\pullbk{1} F^{i_1},
	  &\ldots, 
	  &\pullbk{m}\tpullbk{i_m}{v}\tpullbk{i}{u} \pullbk{m} F^{i_m} \big) \\[3ex]
   \phantom{\FIUTilde}%pour l'espacement
  = \big( & \tilde{F}_0, &\tilde{F}_1,&  \ldots,& \tilde{F}_m,\\[1.3ex]
  &&\pullbk{m} \tpullbk{i_m}{v} F^i, &&\\[1.3ex]
  & \pullbk{m}\tpullbk{i_m}{v}\tpullbk{i}{u} \tilde{F}_0,
  & \pullbk{m}\tpullbk{i_m}{v}\tpullbk{i}{u} \tilde{F}_1,
	  &\ldots,
  & \pullbk{m}\tpullbk{i_m}{v}\tpullbk{i}{u} \tilde{F}_m\big).
  \end{matrix}
  \]
  Consider first the middle term. 
  Since $v = v(t)$ goes to zero, 
  \(\tpullbk{i_m}{v} F^i = F^i + o(1)\) by~\eqref{eq:contPB2} from Lemma~\ref{lem:contPB}. 
  Since $u_j(t)$ goes to zero, the same argument and the fact that pullbacks of $o(1)$ terms
  stay $o(1)$, once more by Lemma~\ref{lem:contPB}, show that
  $\tpullbk{i_{j-1}}{u_j}(F^i + o(1)) = F^i + o(1)$. 
  Therefore:
  \begin{align*}
    \pullbk{m}\tpullbk{i_m}{v} F^i &= F^i + o(1)
  \end{align*}
  so that $\FIUTilde(t)$ generates $F^i$.

  Now consider $\famG = (G_1, \ldots G_L)$. Since $\FIU(t)$ generates $\famG$,
  and $\FIUTilde(t)$ contains $\FIU(t)$, $\FIUTilde(t)$ generates all $G_l$. 
  Let us now prove that $\FIUTilde(t)$ generates $[F^i,G_l]$. 
  The first step is to write down a relation for $G_l$:
  \[
  G_l = \sum_j \lambda_{lj}(t) \tilde{F}_j(t) + R_l(t).
\]
where $R_l(t) = o(1)$. 
  Since this is an equality of vector fields, we can pull it back 
  by~$\pullbk{m}\tpullbk{i_m}{v}\tpullbk{i}{u}$:
  \[
    \pullbk{m}\tpullbk{i_m}{v}\tpullbk{i}{u} G_l
    = \sum_j \lambda_{lj}(t)\pullbk{m}\tpullbk{i_m}{v}\tpullbk{i}{u} \tilde{F}_j + \pullbk{m}\tpullbk{i_m}{v}\tpullbk{i}{u}R_l(t).
   \]
   The difference between the last two equalities yields
\begin{equation}
  \label{eq:almostThere}
    \pullbk{m}\tpullbk{i_m}{v}\tpullbk{i}{u} G_l - G_l
    = \sum_j \lambda_{lj}(t)(\pullbk{m}\tpullbk{i_m}{v}\tpullbk{i}{u} \tilde{F}_j  - \tilde{F}_j(t))
    + \pullbk{m}\tpullbk{i_m}{v}\tpullbk{i}{u}R_l(t) - R_l(t). 
  \end{equation}
Since the $u_j$ and $v$ are negligible with respect to $u$, repeated
applications of Lemma~\ref{lem:contPB} show that the left hand side can be written as
\[
    \pullbk{m}\tpullbk{i_m}{v}\tpullbk{i}{u} G_l - G_l
    = u[F^i,G_l] + o(u).
\]
Similarly the last term on the right hand side satisfies
\[
   \pullbk{m}\tpullbk{i_m}{v}\tpullbk{i}{u}R_l(t) - R_l(t)
   = u[F^i,R_l(t)] + o(u).
\]
Plugging this back in \eqref{eq:almostThere} and dividing by $u$ we get

\[
  [F_i,G_l] = \sum_j \frac{\lambda_{lj}(t)}{u} 
  (\pullbk{m}\tpullbk{i_m}{v}\tpullbk{i}{u} \tilde{F}_j  - \tilde{F}_j(t))
  + o(1) + [F_i,R_l(t)].
\]
Since $R_l(t) = o(1)$, the last statement of Lemma~\ref{lem:contPB} implies
that $[F^i,R_l(t)] = o(1)$.  Therefore $[F^i,G_l]$ can be written as the sum of 
a linear combination of vector fields in $\FIUTilde$, and a remainder. This 
shows that $\famG \cup \{F^i\} \cup \{ [F^i,G], G \in \famG \}$ is generated 
by $\FIUTilde$, and concludes the proof of Lemma~\ref{lem:hanoi}.
\end{proof}

\subsection{Regularity at a fixed time}
Once more, we show that if the local criterion holds, then the global one holds for a good choice
of indices and times. 
The global criterion is expressed in terms of the family described by \eqref{eq:defCIU}. 
It will be easier to work with a slightly different family,  namely:
\begin{equation}
  \GIU = \left(
  F^{i_0} - \pullbk{1}F^{i_1}, \pullbk{1} F^{i_1} - \pullbk{2} F^{i_2}, 
  \ldots,  \pullbk{m-1}F^{i_{m-1}} - \pullbk{m}  F^{i_{m}} \right). 
\end{equation}
It is easy to see that $\GIU$ and the original family span the same space at each point (the $k$\textsuperscript{th} vector 
in the original family is the sum of the last $m-k+1$ vectors of $\GIU$). The analogue of 
Lemma~\ref{lem:hanoi} is the following:
  \begin{lem}[More Towers of Hanoï]
    Suppose that, for some $\vtr{i}$ and some time functions $\vtr{u}(t)$, $\GIU(t)$ asymptotically generates $\famG$.
    Choose $u(t)$, $v(t)$ as in Lemma~\ref{lem:hanoi} and define $\tilde{\vtr{i}}$, $\tilde{\vtr{u}}(t)$ by concatenation
    as in \eqref{eq:defiTilde}.

     Then $\GIUTilde(t)$ asymptotically generates $\famG \cup \{F^i - F^{i_m}, F^i - F^{i_0}\} \cup \{ [F^i,G], G\in \famG\}$.  
  \end{lem}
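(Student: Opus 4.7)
The plan is to imitate the proof of Lemma~\ref{lem:hanoi} while tracking the difference structure that defines $\GIU$. Set $P = \pullbk{m}\tpullbk{i_m}{v}\tpullbk{i}{u}$ and write $\GIU(t) = (G_1(t),\ldots,G_m(t))$ with $G_k(t) = \pullbk{k-1}F^{i_{k-1}} - \pullbk{k}F^{i_k}$. Unwinding the concatenation \eqref{eq:defiTilde}, the $2m+2$ vectors of $\GIUTilde(t)$ split into three natural groups: the original vectors $G_1,\ldots,G_m$, two ``transition'' vectors
\[
T_1 = \pullbk{m}F^{i_m} - \pullbk{m}\tpullbk{i_m}{v}F^i,
\qquad
T_2 = \pullbk{m}\tpullbk{i_m}{v}F^i - PF^{i_0},
\]
and the pulled-back copy $PG_1,\ldots,PG_m$.

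Since $\GIU(t)\subset\GIUTilde(t)$, the family $\famG$ is generated for free. For the two new fields $F^i - F^{i_m}$ and $F^i - F^{i_0}$, all pullback times $u_j$, $v$, $u$ tend to zero, so repeated use of Lemma~\ref{lem:contPB} yields $\pullbk{m}F^{i_m}=F^{i_m}+o(1)$, $\pullbk{m}\tpullbk{i_m}{v}F^i = F^i+o(1)$ and $PF^{i_0}=F^{i_0}+o(1)$. Therefore $T_1 = -(F^i - F^{i_m}) + o(1)$ and $T_2 = (F^i - F^{i_0}) + o(1)$, showing that both fields are asymptotically generated by $\GIUTilde(t)$ with constant coefficients.

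The bulk of the work is the brackets $[F^i, G_l]$. Since $\GIU(t)$ generates $\famG$, I write $G_l = \sum_j \lambda_{lj}(t) G_j(t) + R_l(t)$ with $R_l(t) = o(1)$; applying $P$ and subtracting yields
\[
PG_l - G_l = \sum_j \lambda_{lj}(t)\,(PG_j(t) - G_j(t)) + (PR_l(t) - R_l(t)),
\]
where each $PG_j(t) - G_j(t)$ is the difference of two vectors of $\GIUTilde(t)$. Using the hierarchy $u_j, v \ll u \ll 1$ and iterating Lemma~\ref{lem:contPB}, I will establish the key estimate $PX = X + u[F^i, X] + o(u)$ for every smooth $X$: the contributions coming from $\tpullbk{i_m}{v}$ and from the components of $\pullbk{m}$ are absorbed in $o(u)$ by the size comparisons. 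Specialising to $X = G_l$ and dividing by $u$ recovers $[F^i, G_l]$ as an asymptotic linear combination of $\GIUTilde(t)$-vector differences, up to the remainder $(PR_l(t)-R_l(t))/u$.

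The main technical obstacle, just as in Lemma~\ref{lem:hanoi}, will be controlling this last remainder: one must show $(PR_l - R_l)/u = o(1)$ even though the coefficients $\lambda_{lj}(t)/u$ may blow up. This hinges on the last assertion of Lemma~\ref{lem:contPB} (pullbacks and Lie brackets of $o(1)$ families remain $o(1)$), combined with the quantitative Taylor estimates \eqref{eq:contPB2}--\eqref{eq:contPB3}, whose higher-order remainders involve the norms $\NRM{R_l(t)}_{k+2}$, which also tend to zero. Putting these together gives $PR_l - R_l = u[F^i, R_l] + o(u) = o(u)$, hence $(PR_l - R_l)/u = o(1)$, and the argument closes exactly as in the proof of Lemma~\ref{lem:hanoi}.
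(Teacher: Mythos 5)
Your proposal is correct and follows essentially the same route as the paper: the same three-group decomposition of $\GIUTilde(t)$, the zeroth-order identification of the two middle difference fields with $F^{i_m}-F^i$ and $F^i-F^{i_0}$, and then the exact argument of Lemma~\ref{lem:hanoi} (pull back the relation for $G_l$ by the composite map, subtract, use $PX - X = u[F^i,X]+o(u)$ and the $o(1)$-stability of Lemma~\ref{lem:contPB} to kill the remainder) to recover $[F^i,G_l]$. The paper simply invokes ``repeat the exact same argument as in Lemma~\ref{lem:hanoi}'' where you spell the details out, so there is no substantive difference.
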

  \begin{proof}
    For $1\leq j \leq m$, call $\tilde{F}_j$ the $j$\textsuperscript{th} vector field
    in $\GIU$: $\tilde{F}_j = \pullbk{j-1} F^{i_{j-1}} - \pullbk{j} F^{i_{j}}$. 
    The family $\GIUTilde$ is then
  \[
  \begin{matrix}
    \GIUTilde  
  = \big( & \tilde{F}_1, &\tilde{F}_2,&  \ldots,& \tilde{F}_m,\\[1.3ex]
  & \pullbk{m}     ( F^{i_{m}} - \tpullbk{i_{m}}{v} F^i),
  &  \pullbk{m} \tpullbk{i_m}{v} \PAR{ F^i - \tpullbk{i}{u} F^{i_0}},&&\\[1.3ex]
  &  \pullbk{m} \tpullbk{i_m}{v}\tpullbk{i}{u} \tilde{F}_1, 
  &  \pullbk{m} \tpullbk{i_m}{v}\tpullbk{i}{u} \tilde{F}_2, 
	  &\ldots,
  &  \pullbk{m} \tpullbk{i_m}{v}\tpullbk{i}{u} \tilde{F}_m, 
	  \big).
  \end{matrix}
  \]
  The two vector fields in the middle give, at zero-th order as~$t$ goes to
  zero, $F^{i_m} - F^i$ and $F^i - F^{i_0}$.  For any element $G_l\in\famG$, 
  we may repeat the exact same argument as in Lemma~\ref{lem:hanoi}
  to generate the vectors $G_l$ and $[F^i,G_l]$ from the family $\GIUTilde(t)$. 
  \end{proof}
  With this lemma in hand, we know we can generate $F^1 - F^2$ (starting from $\vtr{i} =(i_0) = (1)$, an empty $ \vtr{u}= ()$, 
  and choosing $i=2$). 
  In all successive ``enrichments'' of $\vtr{i},\vtr{u}$ by the Hanoï procedure, 
  the first and last components of $\vtr{i}$ will always be $1$, therefore
 given enough enrichments, we generate all the $F^i - F^1$.  Consequently  we also get all differences: $F^i - F^j = F^i - F^1 + F^1 - F^j$. 
  Finally, by taking the bracket by $F^i$, we generate $[F^i,F^j]$, and then all subsequent higher order brackets. 
  This concludes the proof of Theorem~\ref{th:hormander}.

\paragraph*{Acknowledgements}
FM and PAZ thank MB for his kind hospitality
and his coffee breaks. We acknowledge financial support from the Swiss National Foundation Grant 
FN 200021-138242/1
and the French ANR projects EVOL, ProbaGeo and ANR-12-JS01-0006 - PIECE.
This work was mainly done while PAZ (resp. FM) held a position at the University of Burgundy 
(resp. Rennes).

\addcontentsline{toc}{section}{\refname}%

{\footnotesize
 \bibliographystyle{amsplain}
\bibliography{flots}
}

\bigskip

%\begin{flushright}\texttt{Compiled \today.}\end{flushright}

{\footnotesize %
   \noindent Michel~\textsc{Bena\"im},
e-mail: \texttt{michel.benaim(AT)unine.ch}

 \medskip

\noindent\textsc{Institut de Math\'ematiques, Universit\'e de Neuch\^atel,
11 rue \'Emile Argand, 2000 Neuch\^atel, Suisse.}

\bigskip

   \noindent St\'ephane \textsc{Le Borgne},
e-mail: \texttt{stephane.leborgne(AT)univ-rennes1.fr}

 \medskip

 \noindent\textsc{IRMAR UMR 6625, CNRS-Université de Rennes 1,
   Campus de Beaulieu, 35042 Rennes \textsc{Cedex}, France.}

 \bigskip

 \noindent Florent \textsc{Malrieu},
 e-mail: \texttt{florent.malrieu(AT)univ-tours.fr}

 \medskip

 \noindent\textsc{LMPT UMR 7350, CNRS-Université de Tours,
   UFR Sciences et Techniques,
Parc de Grandmont,
37200 Tours, France.}

\bigskip

   \noindent Pierre-Andr\'e~\textsc{Zitt},
e-mail: \texttt{pierre-andre.zitt(AT)univ-mlv.fr}

 \medskip

 \noindent\textsc{LAMA UMR 8050, CNRS-Université-Paris-Est-Marné-La-Vallée, 
 5, boulevard Descartes,
 Cité Descartes, Champs-sur-Marne,
 77454 Marne-la-Vallée Cedex 2, France.}

}%footnotesize

\end{document}